\newtheorem{theorem}{Theorem}[section]
\newtheorem{assumption}[theorem]{Assumption}
\newtheorem{definition}[theorem]{Definition}
\newtheorem{lemma}[theorem]{Lemma}
\newtheorem{proposition}[theorem]{Proposition}
\newtheorem{remark}[theorem]{Remark}
\numberwithin{equation}{section}
\newcommand{\F}{\mathcal{F}}
\newcommand{\Rbf}{\mathbf{R}}
\newcommand{\Ebf}{\mathbf{E}}
\newcommand{\Ac}{\mathcal{A}}
\newcommand{\rd}{\mathrm{d}}
\newcommand{\bbeta}{\pmb{\beta}}
\newcommand{\bpi}{\pmb{\pi}}
\def\trieq{\triangleq}
\DeclareMathOperator*{\range}{Range}
\DeclareMathOperator*{\var}{Var}
\DeclareMathOperator*{\cov}{Cov}
\begin{document}
\makeatletter
\def\@setauthors{%
\begingroup
\def\thanks{\protect\thanks@warning}%
\trivlist \centering\footnotesize \@topsep30\p@\relax
\advance\@topsep by -\baselineskip
\item\relax
\author@andify\authors
\def\\{\protect\linebreak}%
{\authors}%
\ifx\@empty\contribs \else ,\penalty-3 \space \@setcontribs
\@closetoccontribs \fi
\endtrivlist
\endgroup } \makeatother
 \baselineskip 18pt

\title{Equilibrium Portfolio Selection for Smooth Ambiguity Preferences}
\author{
Guohui Guan\thanks{School of Statistics, Renmin University of China, Beijing 100872, China; Email: $<$guangh@ruc.edu.cn$>$.}
\and
Zongxia Liang\thanks{Department of Mathematical Sciences, Tsinghua
University, Beijing 100084, China; Email: $<$liangzongxia@mail.tsinghua.edu.cn$>$.}
\and
Jianming Xia\thanks{RCSDS, NCMIS, Academy of Mathematics and Systems Science,
Chinese Academy of Sciences, Beijing 100190, China; Email:
$<$xia@amss.ac.cn$>$.}}

\maketitle

\begin{abstract}
This paper investigates the  equilibrium portfolio selection for smooth ambiguity preferences in a continuous-time market.
The investor is uncertain about the risky asset's drift term and updates the subjective belief according to the Bayesian rule. Two versions of the verification theorem are established and an equilibrium strategy can be
 decomposed into a myopic demand and two hedging demands. When the prior is Gaussian, the closed-form equilibrium solution is obtained. A puzzle in the numerical results is interpreted via an alternative representation of the smooth ambiguity preferences.
 \vskip 15 pt \noindent
\textbf{Keywords:}   Bayesian learning;  Smooth ambiguity;  Time-inconsistency; Equilibrium strategy.
\vskip 5pt  \noindent
\textbf{2010 Mathematics Subject Classification:} 91G10,  91G80, 49L20.
 \vskip 5pt  \noindent
\textbf{JEL Classifications:} G11, C61.
\end{abstract}
\vskip15pt

\section{Introduction}

An important topic in modern financial theory is portfolio selection. The expected utility maximization problem with complete information is first studied by \cite{Merton69,Merton1971} for the continuous-time model and is then extensively developed by a large number of literatures; see \citet{KS1999} for a review.
% \citet{Pliska1986}, \citet{Kara1987}, and \citet{Cox1989, Cox1991} for complete markets. In the much more complicated case of
%incomplete markets, the problem was studied by \citet{HePears1991a,HePears1991b} and \citet{KLSX1991} for a specified model
%and by \citet{KS1999, KS2003} for a general
%incomplete semimartingale model. All of the above literatures studied the expected utility maximization problem for complete information.
An important feature in practical investments is partial information, in particular, the unobservable expected stock price returns. The study on expected utility maximization with partial information can go back to \citet{detemple1986}, \citet{Gennotte1986} and \citet{KaraXue1991} and is then extensively developed by, among others, \citet{Lakner1995, Lakner1998}, \cite{karatzas2001bayesian}, %\cite{rogers2001relaxed},
\cite{honda2003optimal}, \cite{sass2004optimizing}, \cite{Rieder2005}, %\cite{putschogl2008optimal,putschogl2011optimal},
\citet{bjork2010optimal}, \citet{Hata2018}, and \cite{Bis2019}.

This paper aims to investigate the continuous-time portfolio selection problem with partial information for ambiguity smooth preferences, which is proposed and axiomatized by \cite{Klibanoff05}. For notational simplicity, we consider a financial market consisting of two assets: a risk-free bond and a stock.\footnote{The extension to the case of multiple risky assets is easy but notational tedious.}  We assume that only the asset prices are observable.
The objective functional of a smooth ambiguity preference at time $t$ is
\begin{equation}\label{equ1}
	\mathbf{E}\left[\left.\phi\left(\mathbf{E}[U(X_T) \vert \mathcal{F}^S_t,Z]\right)\right| \mathcal{F}^S_t\right],
\end{equation}
where $\mathcal{F}^S_t$ represents the available information at time $t$ generated by the stock price, $Z$ is the unobservable drift of the stock price,
$U$ is the utility function for terminal wealth $X_T$, and the shape of $\phi$ captures the attitude toward the uncertainty generated by $Z$.

When $\phi$ is linear, the two conditional expectations of \eqref{equ1} can be reduced, and hence \eqref{equ1} reduces to the standard conditional expected utility $\mathbf{E}[U(X_T) \vert \mathcal{F}^S_t]$.
When $\phi$ is nonlinear, the two conditional expectations can not be reduced anymore and the irreducibility allows for ambiguity-sensitive behavior.
In particular,  the concavity (convexity) of $\phi$ captures ambiguity aversion (seeking),  according to \cite{Klibanoff05}.
%in the sense that it places a larger (smaller) weight on bad expected ``U-utility" realizations.
%The smooth ambiguity model consists of two expectations and two functions $(\phi, U)$, representing the ambiguity attitude and risk attitude, respectively,
Moreover, the irreducibility also leads to the time-inconsistency of the preference, which further leads to the time-inconsistency of optimality: a trading strategy that is optimal today may no longer be optimal tomorrow and hence may be deviated. To find a time-consistent strategy, we consider the so-called \textit{equilibrium solutions}, which have been used in \citet{Strotz1955} for discrete-time setting and \citet{Ekeland2010}, \citet{Ekeland2012} and \cite{Bjork17} for continuous-time setting with objective functionals being significantly different to ours. We refer to \citet{HeJiang2021} for some new progress and \citet{HeZhou2022} for a review on equilibrium solutions for time-inconsistent preferences.
 %and \cite{Hu12}, \cite{BMZ14} for time-consistent mean-variance criterion.
%In \cite{Bjork17}, the authors present a general time-inconsistent problem with two non-linear functions and a state-dependent valuation, which however does not include the form under smooth ambiguity. Inspired by \cite{Bjork17}, we solve the portfolio selection problem under smooth ambiguity by considering the equilibrium strategy and value function. We adopt a similar definition of the equilibrium strategy as in \cite{Bjork17}. Nevertheless, the verification theorem and extended Hamilton-Jacobi-Bellman (HJB) equations are very different, which are presented in Theorem \ref{thm:y}.

The main contributions of this paper are as follows.
\begin{itemize}

\item As far as we know, the existing literature contains no report on the equilibrium strategies for smooth ambiguity preferences with Bayesian learning and time-state-dependent admissible strategies in a continuous-time framework. Our work fulfills such a gap.\footnote{ \cite{balter2021time} also investigate the equilibrium strategies for smooth ambiguity preferences in a continuous-time setting. But the admissible strategies in \cite{balter2021time} are very restrictive: they are time-dependent and state-independent.}

\item We establish two different versions of the verification theorem based on two different \textit{revealing processes},
where both ambiguity averse and ambiguity seeking cases are included. We find that the equilibrium strategy can be decomposed into three parts: a myopic demand and two hedging demands. The two hedging demands are related to the uncertainty of the drift and the risk of the revealing process, respectively. In particular, the optimal strategy for the expected utility preference can also be decomposed in this way as a special case.

\item For the Gaussian prior and the exponential-power specification for $(U,\phi)$, we provide the equilibrium strategies in closed form %in terms of the solution to a system of ODEs. We prove a global existence result for the ODEs
and give a rigorous verification of the solution under mild conditions. % over the ambiguity attitude. %Moreover, the attitudes towards risk and ambiguity are also separated in the closed-form solution.
We find that the equilibrium strategy is a linear function of the revealing process.
%, which is composed of myopic demand and two hedging demands related to the uncertainty of the drift and the risk of the revealing process.

\item In the numerical example, we find that ambiguity aversion lowers the total hedging demand, which is consistent with many previous works. An interesting case is that $\phi(u)=-{1\over\alpha}(-u)^\alpha$ with $\alpha\in(0,1)$. In this case, $\phi$ is convex and hence the investor is ambiguity seeking. But the hedging demand $h^Z$ related to the uncertainty of $Z$ is decreasing w.r.t. the prior variance of $Z$, which represents the uncertainty of $Z$. Such a phenomenon seems like a puzzle. We give an interpretation to this ``puzzle" via the alternative representation of \eqref{equ1}:
$$\Ebf[V(C_{X_T})\vert\mathcal{F}_t]\text{ with } C_{X_T}\trieq U^{-1}(\Ebf[U(X_T)|\mathcal{F}_t,Z]),$$
where $V=\phi\circ U$ and $C_{X_T}$ is the conditional certainty equivalent of $X_T$. $C_{X_T}$ is a monetary payoff.
The shape of $V$ represents the investor's attitude toward the risk measured in \textit{monetary} scale.
When $\alpha\in(0,1)$, $V$ is concave and the investor dislikes the risk of $C_{X_T}$.
Therefore, in this case, it is reasonable that the hedging demand $h^Z$ is decreasing w.r.t. the prior variance of $Z$.
While the shape of $\phi$ represents the investor's attitude toward the risk measured in \textit{utility} scale.
What the convexity of $\phi$ represents is that the investor is seeking for the risk of the conditional expected utility $\Ebf[U(X_T)|\mathcal{F}_t,Z]$, which is not monetary.

\end{itemize}

The optimal investment for smooth ambiguity preference is investigated by \citet{Taboga2005} and \cite{Gollier11} for a single period setting. The discrete-time recursive smooth ambiguity preference is developed by \cite{Klibanoff09} and \cite{hayashi2011intertemporal} and is then applied in portfolio selection and asset pricing by \cite{JM12} and \citet{chen2014dynamic}.  The continuous time recursive smooth ambiguity preference is developed and applied by \citet{HansenMiao2018} and \citet{HansenMiao2022}. In contrast to the non-recursive form \eqref{equ1}, the recursive smooth ambiguity preference is time consistent and hence so is the optimal solution.

The remainder of this paper is as follows. Section 2 formulates the portfolio selection problem for smooth ambiguity preferences. Sections 3 and 4 establish two versions of the verification theorem on the equilibrium strategy. Section 5 provides the closed-form results when the prior is Gaussian.  Section 6 presents and discusses some numerical examples.

\section{Problem Formulation}\label{sec:pro:formulation}

\subsection{Financial Market}

Let  $(\Omega, \mathcal{F}, \{\mathcal{F}_t\}_{t\in[0,T]}, \mathbb{P} )$ be a filtered complete probability space, where constant $T>0$ is  the time horizon and filtration $\{\mathcal{F}_t\}_{t\in[0,T]}$ satisfies the usual conditions and represents the whole information of the financial market.
The financial market consists of two assets: one bond (risk-free asset) and one stock (risky asset). The risk-free interest rate of the bond is a constant $r$.
The stock price process  $S=\{S_t\}_{t\in[0,T]}$ is a geometric Brownian motion which satisfies
\begin{equation*}
	\begin{split}
		\frac{\mathrm{d}S_t}{S_t}=Z\mathrm{d}t+\sigma\mathrm{d}W_t,
	\end{split}
\end{equation*}
where $W=\{W_t\}_{t\in[0,T]}$ is a standard Brownian motion with respect to (w.r.t.)  filtration $\{\mathcal{F}_t\}_{t\in[0,T]}$, the volatility $\sigma>0$ is a constant, and the drift  $ Z$  is an unknown constant.

Assume that $ Z$ is, a prior, independent of the Brownian motion $W$ and $F_0$ is the  prior probability distribution function of $ Z$ satisfying the following condition
\begin{equation}\label{assup1}
	\exists\, \eta>0, \quad \mathbf{E}[e^{\eta Z^{2}}]=\int_{\mathbf{R}} e^{\eta z^2} F_0(\rd z)<\infty.
\end{equation}

The investor (she) does not have access to the whole information but observes the evolutions of the asset prices.  Therefore the filtration of her admissible information is the natural filtration $\{\F^S_t\}$ generated by $S$.
Let process $\bbeta=\{\bbeta_t\}_{t\in[0,T]}$ be given by
$$\bbeta_t\trieq\mathbf{E}[ Z\mid\mathcal{F}^S_t],\quad t\in[0,T].$$
Let process $W^S=\{W^S_t\}_{t\in[0,T]}$ be given by
\begin{equation}\label{equ:ws}
	W^S_t\trieq\int_0^t{ Z-\bbeta_s\over\sigma}\mathrm{d}s+ W_t.
\end{equation}
It is well known that $W^S$ is a standard Brownian motion w.r.t. filtration $\{\mathcal{F}^S_t\}$ under probability $\mathbb{P}$; see, e.g.,  \citet[Proposition 2]{Bis2019}. Obviously, in terms of $W^S$, the stock price evolves according to the following stochastic differential equation (SDE)
\begin{equation}\label{eq:dS:WS}
	{\rd S_t\over S_t}=\bbeta_t\rd t+\sigma\rd W^S_t.
\end{equation}

Let $Y=\{Y_t\}_{t\in[0,T]}$ be the logarithmic stock price process, i.e.,
$$Y_t\trieq\log S_t,%=\log S(0)+\sigma W^\mathbb{Q}_t+\left(r-{\sigma^2\over2}\right)t,
\quad t\in[0,T].$$
Then by \citet[Theorem 1]{Bis2019} (see also Proposition \ref{prop:mu}),
$$\bbeta_t
=\varphi(t,Y_t),\quad t\in[0,T],$$
where $$\varphi(t,y)=\frac{\displaystyle\int_{\mathbf{R}} z \exp \left(\frac{z}{\sigma^{2}}\left[y-Y_0+\frac{1}{2} \sigma^2 t\right]- \frac{z^2}{2\sigma^2}t\right) F_0(\rd z)}{\displaystyle\int_{\mathbf{R}} \exp \left(\frac{z}{\sigma^{2}}\left[y-Y_0+\frac{1}{2} \sigma^2 t\right]- \frac{z^2}{2\sigma^2}t\right) F_0(\rd z)}$$
and $\varphi\in C^\infty([0,T]\times\mathbf{R})$.
Therefore, by \eqref{eq:dS:WS}, we have
\begin{equation}\label{eq:dS:gtY}
	{\rd S_t\over S_t}=\varphi(t,Y_t)\rd t+\sigma\rd W^S_t
\end{equation}
and hence
%$$Y_t=\log S(0)+\sigma W_t+\left(r-{\sigma^2\over2}\right)t$$
%and hence
\begin{equation}\label{eq:dY}
	\rd Y_t%=\left(\beta(t)-{\sigma^2\over2}\right)\rd t+\sigma \rd W^S_t
	=\left(\varphi(t,Y_t)-{\sigma^2\over2}\right)\rd t+\sigma\rd W^S_t.
\end{equation}

%Using Proposition \ref{prop:mu},  the posterior distribution of $ Z$ is
%\begin{equation}
%	\begin{split}
%		P( Z\le v|\mathcal{F}^S_t)&=\mathbf{E}[1_{\{ Z\le v\}}|\mathcal{F}^S_t]
%		\\&=\frac{\int_{-\infty}^v\exp \left(\frac{z}{\sigma^{2}}\left[Y_t-Y_0+\frac{1}{2} \sigma^2 t\right]- \frac{z^2}{2\sigma^2}\right) F_0(\rd z)}{\int_{\mathbf{R}} \exp \left(\frac{z}{\sigma^{2}}\left[Y_t-Y_0+\frac{1}{2} \sigma^2 t\right]- \frac{z^2}{2\sigma^2}\right) F_0(\rd z)}.
%	\end{split}
%\end{equation}		
%%	$$P( Z\le v|\mathcal{F}^S_t)=P( Z\le v|\mathcal{F}^Y_t).$$
%The posterior distribution of $ Z$ only depends on the states at time $t$. We use the notation $F(t,Y_t,\cdot)=P( Z\le v|\mathcal{F}^S_t)$ to represent the posterior distribution of $ Z$ at time $t$.
% As $Y_t$ can be expressed by $\beta(t)$, for simplicity, we use the notation $F(t,{\beta(t)},\cdot)$ instead of $F(t,Y_t,\cdot)$ to represent the posterior distribution of $ Z$ at time $t$ later.

\subsection{Trading Strategies}

The investor does not have access to the whole information but observes the evolutions of the stock price, or equivalently, of the logarithmic stock price $Y$.
The evolution of $Y$ reveals information to the investor about the true value of $Z$ and therefore is called the \textit{revealing process}.
The investor trades the bond and the stock continuously within the time horizon $[0,T]$.
Based on \eqref{eq:dS:gtY}--\eqref{eq:dY}, her trading strategy can be represented by a feedback function
\begin{equation}\label{eq:pi}
	\tilde{\pi}: (t,x,y)\mapsto \tilde{\pi}(t,x,y),
\end{equation}
according to which, the self-financing wealth process $X^{\tilde{\pi}}=\{X^{\tilde{\pi}}_t\}_{t\in[0,T]}$ evolves as
\begin{equation}\label{equ:x}
	\mathrm{d}X^{\tilde{\pi}}_t=rX^{\tilde{\pi}}_t\mathrm{d}t+\tilde{\pi}(t,X^{\tilde{\pi}}_t,Y_t)[\varphi(t,Y_t)-r]\mathrm{d}t
	+\tilde{\pi}(t,X^{\tilde{\pi}}_t,Y_t)\sigma\mathrm{d}W^S_t%\qquad t\in[0,T].
\end{equation}
% $Y=\{Y_t\}_{t\in[0,T]}$ satisfies the following SDE
%\begin{equation}\label{eq:Yty}
%\rd Y_t%=\left(\beta(t)-{\sigma^2\over2}\right)\rd t+\sigma \rd W^S_t
%=\left(\varphi(t,Y_t)-{\sigma^2\over2}\right)\rd t+\sigma\rd W^S_t,%\quad t\in[0,T],\\
%\end{equation}
and $\tilde{\pi}\left(t,X^{\tilde{\pi}}_t,Y_t\right)$ is the dollar amount invested in the stock at time $t\in[0,T]$. Let
\begin{equation}
	\tilde\Pi_0\trieq\left\{\tilde{\pi}:[0,T]\times\mathbf{R}^2\to\mathbf{R}\,\left|\,
	%\begin{array}{l}
	\text{SDE \eqref{equ:x} has a unique strong}  \text{ solution } X^{\tilde{\pi}}% \\
	%\text{for any given states } X^{\tilde{\pi}}_t=x \text{ and } Y_t=y.
	% \end{array}
	\right.\right\}.
\end{equation}
Let $\tilde\Pi\subseteq\tilde\Pi_0$ denote the set of admissible trading strategies $\tilde{\pi}$. The definition of admissible strategies is not given here for general models. But it will be specified in Section \ref{sec:special} for some special cases.

\subsection{Smooth Ambiguity Preference and Equilibrium Strategies}

Following \cite{Klibanoff05}, we assume that the investor has a smooth ambiguity preference and her objective function at time $t$ is
\begin{equation}\label{eq:sm:t:J}
	J(t,x,y,\tilde{\pi})\trieq\mathbf{E}\Big[\left.\phi\Big(\mathbf{E} \left[ U\left(X^{\tilde{\pi}}_T\right) \mid Z,X^{\tilde{\pi}}_t=x,Y_t=y\right]\Big)\,\right|\,X^{\tilde{\pi}}_t=x,Y_t=y\Big],
\end{equation}
where
\begin{itemize}
	
	\item $U$ is an increasing concave function capturing risk aversion;
	\item $\phi$ is an increasing function capturing her attitude toward ambiguity. In particular, she is ambiguity averse (resp. seeking, neutral) if $\phi$ is concave (resp. convex, linear), according to  \cite{Klibanoff05}.
\end{itemize}
%{\color{red}
%\begin{remark}
%Another version of the smooth ambiguity model is formulated by functions $V$ and $U$, i.e.,
%\begin{equation}\label{eq:sm:t:J1}
%	V^{-1}\left(\mathbf{E}\Big[\left.V\circ U^{-1}\left(\mathbf{E} \left[ U\left(X^{\tilde{\pi}}_T\right) \mid Z,X^{\tilde{\pi}}_t=x,Y_t=y\right]\right)\,\right|\,X^{\tilde{\pi}}_t=x,Y_t=y\Big]\right),
%\end{equation}
%\eqref{eq:sm:t:J} and \eqref{eq:sm:t:J1} are equivalent by setting $V=\phi\circ U^{-1}$. $\phi$ and $V$ reflect the investor's attitude toward the expected ``U-utility" realizations and certainty equivalent, respectively.
%\end{remark}
%}

A realization of $Z$ is usually denoted by $z\in\range (Z)$. We will use the following notation:
\begin{align}
	%&J(t,x,y,\tilde{\pi})\trieq\mathbf{E}\left[\left.\phi\left(\mathbf{E} \left[\left.U\left(X^{\tilde{\pi}}_t\right)\,\right |\, Z,X^{\tilde{\pi}}_t=x,Y_t=y\right]\right)\,\right|\,X^{\tilde{\pi}}_t=x,Y_t=y\right],\\
	{\tilde g}^{z,\tilde{\pi}}(t,x,y)\trieq \mathbf{E} \left[\left.U\left(X^{\tilde{\pi}}_T\right)\,\right |\, Z=z,X^{\tilde{\pi}}_t=x,Y_t=y\right],\label{eq:gzpi}
\end{align}
for $(t,x,y)\in [0,T]\times\mathbf{R}^2$, $\tilde{\pi}\in\tilde\Pi$ and $z\in\range (Z)$. %, which is justified as follows.
For every $\tilde{\pi}\in\tilde\Pi$, ${\tilde g}^{z,\tilde{\pi}}(t,x,y)$ is the ``inside" conditional  expected utility, given the state $(x,y)$ at time $t$ and the inside information $Z=z$. Here ``inside" refers to the situation as if the investor observes the realization $Z=z$.

Let $\{\F^W_t\}$ be the natural filtration generated by $W$. Then the $\sigma$-field generated by $ Z$ and $\F^S_t$ coincides that by $ Z$ and $\F^W_t$, that is,
$\sigma( Z)\vee \F^S_t=\sigma( Z)\vee \F^W_t$.
Given a realization $z\in\range(Z)$, the corresponding ``conditional realization" $X^{z,\tilde{\pi}}$ of $X^{\tilde{\pi}}$ satisfies the following SDE
\begin{equation}\label{equ:x:mu}
	\mathrm{d}X^{z,\tilde{\pi}}_t=rX^{z,\tilde{\pi}}_t\mathrm{d}t+\tilde{\pi}(t,X^{z,\tilde{\pi}}_t,Y^{z}_t)(z-r)\mathrm{d}t
	+\tilde{\pi}(t,X^{z,\tilde{\pi}}_t,Y^{z}_t)\sigma\mathrm{d}W_t. %,\qquad t\in[0,T].
\end{equation}
Here $Y^{z}$ denotes the corresponding ``conditional realization" of $Y$, which satisfies the following SDE
\begin{equation}\label{equ:y}
	\rd Y^{z}_t=\left(z-{\sigma^2\over2}\right)\rd t+\sigma \rd W_t. %,\quad t\in[0,T].
\end{equation}
By \eqref{equ:x:mu}--\eqref{equ:y}, we know that $(X^{z,\tilde{\pi}},Y^{z})$ is a Markovian process w.r.t. the filtration $\{\F^W_t\}$.
%Then
%\begin{equation}\label{eq:g:zpi}
%\begin{split}
%	&\mathbf{E} \left[U\left(X^{\tilde{\pi}}_t\right)\mid Z=z,{\mathcal{F}^S_t}\right]\\
%	=&\mathbf{E} \left[U\left(X^{ Z,\tilde{\pi}}(T)\right)\mid Z=z,{\mathcal{F}^S_t}\right]\\
%	=&\mathbf{E} \left[U\left(X^{z,\tilde{\pi}}_t\right)\mid Z=z,{\mathcal{F}^W_t}\right]\\
%	=&\mathbf{E} \left[U\left(X^{z,\tilde{\pi}}_t\right)\mid {\mathcal{F}^W_t}\right]\quad(\text{$Z$ is independent of $\F^W_t$)}\\
%	=&\mathbf{E} \left[U\left(X^{z,\tilde{\pi}}_t\right)\mid X^{z,\tilde{\pi}}_t=x,Y^z(t)=y\right]\\
%	=&\mathbf{E} \left[U\left(X^{z,\tilde{\pi}}_t\right)\mid Z=z, X^{z,\tilde{\pi}}_t=x,Y^Z_t=y\right]\\
%	=&{\tilde g}^{z,\tilde{\pi}}(t,x,y)
%\end{split}
%\end{equation}
%for some Borel function ${\tilde g}^{z,\tilde{\pi}}$, which justifies \eqref{eq:gzpi}. Moreover,
%$$\mathbf{E} \left[U\left(X^{\tilde{\pi}}_t\right)\mid Z,{\mathcal{F}^S_t}\right]
%={\tilde g}^ {Z,\tilde{\pi}}(t,x,y),$$
%which, by Proposition \ref{prop:mu}, justifies \eqref{eq:sm:t:J}.
%
%
%Then \eqref{eq:sm:t}-\eqref{eq:gzpi} can also be rewritten as
%\begin{align*}
%	&J(t,x,y,\tilde{\pi})\trieq\mathbf{E}\left[\left.\phi\left(\mathbf{E} \left[\left.U\left(X^{\tilde{\pi}}_t\right)\,\right |\, Z,{\mathcal{F}^S_t}\right]\right)\,\right|\,{\mathcal{F}^S_t}\right],\\
%	&{\tilde g}^{z,\tilde{\pi}}(t,x,y)\trieq \mathbf{E} \left[\left.U\left(X^{\tilde{\pi}}_t\right)\,\right |\, Z=z,{\mathcal{F}^S_t}\right].
%\end{align*}
Obviously,
$$X^{\tilde{\pi}}=X^{ Z,\tilde{\pi}}\text{ and }Y=Y^{ Z}.$$
Then
\begin{equation}
	{\tilde g}^{z,\tilde{\pi}}(t,x,y)= \mathbf{E} \left[\left.U\left(X^{z,\tilde{\pi}}_T\right)\,\right |\,X^{z,\tilde{\pi}}_t=x,Y^z_t=y\right].\label{eq:gzpi:z}
\end{equation}
Because the  posterior distribution  of $Z$ at time $t$ depends only on $Y_t$ (see, e.g., Proposition \ref{prop:mu}), we also have
\[
\begin{split}
	J(t,x,y,\tilde{\pi})&=\mathbf{E}\left[\left.\phi\left({\tilde g}^{Z,\tilde{\pi}}(t,x,y)\right)\,\right|\,X^{\tilde{\pi}}_t=x,Y_t=y\right]
	\\&=\mathbf{E}\left[\left.\phi\left({\tilde g}^{Z,\tilde{\pi}}(t,x,y)\right)\,\right|\,Y_t=y\right].
\end{split}
\]

We say $\tilde{\pi}^*\in\tilde\Pi$ is optimal for $(t,x,y)$ if $\tilde{\pi}^*$ maximizes \eqref{eq:sm:t:J} over $\tilde{\pi}\in\tilde\Pi$.
In the case of linear $\phi$, the investor is neutral towards  ambiguity and the objective function \eqref{eq:sm:t:J} reduces to the classical conditional expected utility $\mathbf{E} \left[U\left(X^{\tilde{\pi}}_T\right)\mid X^{\tilde{\pi}}_t=x,Y_t=y\right]$ with partial observations. In general, $\phi$ is non-linear, and therefore the corresponding optimization problem is time-inconsistent:  the optimal strategy $\tilde{\pi}^*$ for $(t,x,y)$ is not necessarily optimal for $\left(s, X^{\tilde{\pi}^*}_s, Y_s\right)$, $s>t$.
Therefore, we consider the equilibrium solutions, instead of optimal solutions, following \cite{Bjork17}.

\begin{definition}[Equilibrium Strategy]{\label{eq}}
Consider an admissible strategy ${\tilde{\pi}^*}\in\tilde\Pi$.
	We say $\tilde{\pi}^*$ is an equilibrium strategy if, for every $(t,x,y)$ and $\tilde{\pi}\in\tilde\Pi$,  whenever $\tilde{\pi}_{t,h}\in\tilde\Pi$ for all sufficiently small $h>0$, we have
	\begin{eqnarray}{\label{mv1}}
		\mathop {\limsup}\limits_{h\rightarrow 0^+} \frac{J(t,x,y, \tilde{\pi}_{t,h})-J(t,x,y,\tilde{\pi}^*)}{h}\le 0,
	\end{eqnarray}
	where $ \tilde{\pi}_{t,h}$ is given by
	\begin{eqnarray*}
		\tilde{\pi}_{t,h}=
		\begin{cases}
			\tilde{\pi}&  \text{on }  [t,t+h),\\
			\tilde{\pi}^*& \text{otherwise}.
		\end{cases}
	\end{eqnarray*}
	In this case, we say ${\tilde g}^z={\tilde g}^{z,\tilde{\pi}^*}$ is an equilibrium value function for the inside expected utility, where ${\tilde g}^{z,\tilde{\pi}}$ is given by \eqref{eq:gzpi}.
\end{definition}

The equilibrium value function for the inside expected utility plays a central role in studying the equilibrium strategies, as we will see.

\section{Verification Theorem}\label{sec:verification}

For every $z\in\range( Z)$ and $\tilde{\pi}\in\tilde\tilde\Pi$,  the infinitesimal generator $\mathcal{\tilde{A}}^{z,\tilde{\pi}}$ for the inside system \eqref{equ:x:mu}--\eqref{equ:y} is given by, for $f\in C^{1,2,2}([0,T)\times \mathbf{R}^2)\cap C([0,T]\times \mathbf{R}^2)$,
\begin{equation*}
	\mathcal{\tilde{A}}^{z,\tilde{\pi}}f\trieq f_t+[rx+\tilde{\pi}  (z-r)]f_x+\left(z-\frac{\sigma^2}{2}\right)f_y+\frac{1}{2}\tilde{\pi}^2\sigma^2f_{xx}
	+\frac{1}{2}\sigma^2 f_{yy}+\tilde{\pi} \sigma^2f_{xy}.
\end{equation*}

The following assumptions on functions $\{\tilde{g}^z, z\in{\text{Range}(Z)}\}$ will be used in the formulation of the verification theorem.

\begin{assumption}\label{assu0:y}
	For any $\tilde{\pi}\in\tilde\Pi$ and $(t,x,y)\in[0,T)\times\Rbf^2$,   there exists $\tilde{t}\in(t,T)$ such that, $\forall z\in\range(Z)$,  under the conditional probability $\mathbb{P}[\,\cdot\mid X^{z,\tilde{\pi}}_t=x,Y^z_t=y]$, the following two families are supermartingales w.r.t. filtration $\{\F^W_{s}\}$,
	\begin{align*}
		&\left\{\int_t^s\tilde{g}^z_x(u,X^{z,\tilde{\pi}}_u,Y^{z}_u) \tilde{\pi}(u,X^{z,\tilde{\pi}}_u,Y^{z}_u) \mathrm{d}W_u\right\}_{s\in[t,\tilde{t}]},\\
		&\left\{\int_t^s\tilde{g}^z_y(u,X^{z,\tilde{\pi}}_u,Y^{z}_u)\mathrm{d}W_u\right\}_{s\in[t,\tilde{t}]}.
	\end{align*}
\end{assumption}

\begin{assumption}\label{assu1:y}
	For any $z\in\range(Z)$, $\tilde{\pi}\in\tilde\Pi$  and $(t,x,y)\in[0,T)\times\Rbf^2$,  there exists $\tilde{t}\in(t,T)$ such that,  under the conditional probability $\mathbb{P}[\,\cdot\mid X^{z,\tilde{\pi}}_t=x,Y^z_t=y]$, the family
	\begin{equation*}\label{equ:assu1:y}
		\left\{\frac{1}{h}\int_t^{t+h}\mathcal{\tilde{A}}^{z,\tilde{\pi}} \tilde{g}^z(u,X^{z,\tilde{\pi}}_u,Y^{z}_u)\mathrm{d}u\right\}_{0<h\le \tilde{t}-t}
	\end{equation*}
	is uniformly integrable.
\end{assumption}

\begin{assumption}\label{assu2:y}
	For any $\tilde{\pi}\in\tilde\Pi$ and $(t,x,y)\in[0,T)\times\Rbf^2$,  there exists $\tilde{t}\in(t,T)$ such that,
	under the conditional probability $\mathbb{P}[\,\cdot\mid Y^z_t=y]$,
	the family
	\begin{equation*}\label{equ:assu2:y}
		\left\{\phi^\prime\left(\tilde{g}^Z(t,x,y)\right)\mathbf{E}\left[\left.\frac{1}{h}\int_t^{t+h}\mathcal{\tilde{A}}^{Z,\tilde{\pi}} \tilde{g}^Z(u,X^{Z,\tilde{\pi}}_u,Y^{Z}_u)\mathrm{d}u\,\right|Z,X^{Z,\tilde{\pi}}_t=x,Y^Z_t=y\right]\right\}_{0<h\le\tilde{t}-t}
	\end{equation*}
	is uniformly integrable  in the case of concave $\phi$, or, the family
	\begin{equation*}\label{equ:assu22:y}
		\left\{\phi^\prime\left(\tilde{g}^{Z, \tilde{\pi}_{t,h}}(t,x,y)\right)\mathbf{E}\left[\left.\frac{1}{h}\int_t^{t+h}\mathcal{\tilde{A}}^{Z,\tilde{\pi}} \tilde{g}^Z(u,X^{Z,\tilde{\pi}}_u,Y^{Z}_u)\mathrm{d}u\,\right|Z,X^{Z,\tilde{\pi}}_t=x,Y^Z_t=y\right]\right\}_{0<h\le\tilde{t}-t}
	\end{equation*}
	is uniformly integrable in the case of convex $\phi$.
\end{assumption}

\begin{assumption}\label{assu3:y}
	In the case of convex $\phi$, for any $z\in\range(Z)$, $\tilde{\pi}\in\tilde\Pi$  and $(t,x,y)\in[0,T)\times\Rbf^2$,
	$$\lim_{h\to 0^+}\mathbf{E} \left[
	\tilde{g}^z(t+h,X^{z,\tilde{\pi}}_{t+h},Y^{z}_{t+h})
	\mid X^{z,\tilde{\pi}}_t=x, Y^z_t=y\right]=\tilde{g}^z(t,x,y).$$
\end{assumption}

\begin{theorem}[Verification Theorem]\label{thm:y}
	Assume that $U$ is an increasing concave $C^2$ function and $\phi$ is an increasing concave/convex $C^1$ function.
	Consider a family of functions
	$$\left\{\tilde{g}^z, z\in\range( Z)\right\}\subset C^{1,2,2}([0,T)\times \mathbf{R}^2)\cap C([0,T]\times \mathbf{R}^2)$$
	and an admissible strategy $\tilde{\pi}^*\in\tilde\Pi$.  Assume that they satisfy the following three conditions:
	\begin{description}
		\item[(a)] $\forall (t,x,y)\in[0,T]\times \mathbf{R}^2 $,
		\begin{align}
			&\sup_{\tilde{\pi}\in\tilde\Pi}\mathbf{E}\left[\phi^\prime\left(\tilde{g}^Z(t,x,y)\right)\mathcal{\tilde{A}}^{ Z,\tilde{\pi}} \tilde{g}^Z(t,x,y)\mid Y_t=y\right]=0, \label{equ:hjb:y}
			\\ &\mathcal{\tilde{A}}^{z,\tilde{\pi}^*}\tilde{g}^z(t,x,y)=0, \label{equ:hjbg:y}
			\\ &\tilde{g}^z(T,x,y)=U(x);\label{equ:hjbg2:y}
		\end{align}
		\item[(b)] %$\forall (t,x,y)\in[0,T]\times \mathbf{R}^2 $ and $z\in\range(Z)$,
		$\forall z\in\range(Z)$, $\left\{\tilde{g}^z\left(t,X^{z,\tilde{\pi}^*}_t,Y^{z}_t\right)\right\}_{t\in[0,T]}$ is a martingale w.r.t. filtration $\{\mathcal{F}^W_t\}$;
		\item[(c)] $\left\{\tilde{g}^z, z\in\range(Z)\right\}$ satisfies Assumptions \ref{assu0:y}--\ref{assu3:y}.
	\end{description}
	Then $\tilde{\pi}^*$ is an equilibrium strategy and, $\forall z\in\range(Z)$ and $(t,x,y)\in[0,T]\times \mathbf{R}^2$,
	$$\mathbf{E}\left[U(X^{\tilde{\pi}^*}_T)\mid Z=z,X^{\tilde{\pi}^*}_T=x,Y_t=y\right]=\tilde{g}^z(t,x,y).$$
\end{theorem}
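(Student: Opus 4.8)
The plan is to split the argument into two parts: first identify each $\tilde{g}^z$ with the inside conditional expected utility generated by $\tilde{\pi}^*$ — which is exactly the second displayed assertion of the theorem — and then verify the equilibrium inequality \eqref{mv1} by a linearization-and-differentiation argument.

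\emph{Step 1: value identification.} Fix $z\in\range(Z)$. By \eqref{equ:x:mu}--\eqref{equ:y} the pair $(X^{z,\tilde{\pi}^*},Y^z)$ is a Markov process w.r.t.\ $\{\F^W_t\}$, so the martingale property in condition (b), together with the terminal condition \eqref{equ:hjbg2:y}, gives
$$\tilde{g}^z(t,x,y)=\mathbf{E}\big[\tilde{g}^z(T,X^{z,\tilde{\pi}^*}_T,Y^z_T)\mid X^{z,\tilde{\pi}^*}_t=x,\,Y^z_t=y\big]=\mathbf{E}\big[U(X^{z,\tilde{\pi}^*}_T)\mid X^{z,\tilde{\pi}^*}_t=x,\,Y^z_t=y\big].$$
Comparing with \eqref{eq:gzpi:z} yields $\tilde{g}^z=\tilde{g}^{z,\tilde{\pi}^*}$, which proves the second assertion, and in particular $J(t,x,y,\tilde{\pi}^*)=\mathbf{E}\big[\phi(\tilde{g}^Z(t,x,y))\mid Y_t=y\big]$.

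\emph{Step 2: a local estimate for perturbations.} Fix $(t,x,y)$, an admissible $\tilde{\pi}$ with $\tilde{\pi}_{t,h}\in\tilde{\Pi}$ for all small $h>0$, and $z\in\range(Z)$. Conditioning on $\F^W_{t+h}$ and using that $\tilde{\pi}_{t,h}$ coincides with $\tilde{\pi}^*$ on $[t+h,T]$ together with Step 1, the tower property gives
$$\tilde{g}^{z,\tilde{\pi}_{t,h}}(t,x,y)=\mathbf{E}\big[\tilde{g}^z(t+h,X^{z,\tilde{\pi}}_{t+h},Y^z_{t+h})\mid X^{z,\tilde{\pi}}_t=x,\,Y^z_t=y\big].$$
Now apply It\^o's formula to $\tilde{g}^z\in C^{1,2,2}$ along the inside dynamics \eqref{equ:x:mu}--\eqref{equ:y} driven by $\tilde{\pi}$ on $[t,t+h]$, take the conditional expectation, and invoke Assumption \ref{assu0:y} (the two stochastic integrals vanish at $t$ and are supermartingales, hence have non-positive conditional expectation) and Assumption \ref{assu1:y} (to legitimize taking the expectation of the drift term). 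This yields, for all small $h$,
$$\tilde{g}^{z,\tilde{\pi}_{t,h}}(t,x,y)-\tilde{g}^z(t,x,y)\le\mathbf{E}\Big[\int_t^{t+h}\mathcal{\tilde{A}}^{z,\tilde{\pi}}\tilde{g}^z(u,X^{z,\tilde{\pi}}_u,Y^z_u)\,\rd u\,\Big|\,X^{z,\tilde{\pi}}_t=x,\,Y^z_t=y\Big].$$

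\emph{Step 3: linearizing $\phi$ and letting $h\to0^+$.} For concave $\phi$ the subgradient inequality $\phi(b)\le\phi(a)+\phi'(a)(b-a)$ with $a=\tilde{g}^Z(t,x,y)$, $b=\tilde{g}^{Z,\tilde{\pi}_{t,h}}(t,x,y)$, combined with $\phi'>0$ and Step 2 applied conditionally on $Z$, gives
$$\phi\big(\tilde{g}^{Z,\tilde{\pi}_{t,h}}(t,x,y)\big)\le\phi\big(\tilde{g}^Z(t,x,y)\big)+\phi'\big(\tilde{g}^Z(t,x,y)\big)\,\mathbf{E}\Big[\int_t^{t+h}\mathcal{\tilde{A}}^{Z,\tilde{\pi}}\tilde{g}^Z\,\rd u\,\Big|\,Z,X^{Z,\tilde{\pi}}_t=x,\,Y^Z_t=y\Big].$$
Taking $\mathbf{E}[\,\cdot\mid Y_t=y]$, subtracting $J(t,x,y,\tilde{\pi}^*)$ via Step 1, dividing by $h$ and passing to the limit: the inner time-average tends a.s.\ to $\mathcal{\tilde{A}}^{Z,\tilde{\pi}}\tilde{g}^Z(t,x,y)$ by continuity of $\mathcal{\tilde{A}}^{z,\tilde{\pi}}\tilde{g}^z$ and of the sample paths, Assumption \ref{assu1:y} promotes this to the convergence needed inside the conditional expectation given $Z$, and Assumption \ref{assu2:y} (concave version) lets the limit pass through $\mathbf{E}[\,\cdot\mid Y_t=y]$, so that
$$\limsup_{h\to0^+}\frac{J(t,x,y,\tilde{\pi}_{t,h})-J(t,x,y,\tilde{\pi}^*)}{h}\le\mathbf{E}\big[\phi'(\tilde{g}^Z(t,x,y))\,\mathcal{\tilde{A}}^{Z,\tilde{\pi}}\tilde{g}^Z(t,x,y)\mid Y_t=y\big]\le0,$$
the final inequality being \eqref{equ:hjb:y}. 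For convex $\phi$ one uses instead $\phi(b)\le\phi(a)+\phi'(b)(b-a)$, which replaces $\phi'(\tilde{g}^Z(t,x,y))$ by $\phi'(\tilde{g}^{Z,\tilde{\pi}_{t,h}}(t,x,y))$; Assumption \ref{assu3:y} gives $\tilde{g}^{Z,\tilde{\pi}_{t,h}}(t,x,y)\to\tilde{g}^Z(t,x,y)$, hence $\phi'(\tilde{g}^{Z,\tilde{\pi}_{t,h}}(t,x,y))\to\phi'(\tilde{g}^Z(t,x,y))$, and the same limiting argument with the convex version of Assumption \ref{assu2:y} gives the identical bound. This establishes \eqref{mv1}, so $\tilde{\pi}^*$ is an equilibrium strategy.

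\emph{Main obstacle.} It\^o's formula and the two linearization inequalities are routine; the delicate point is the $h\to0^+$ passage in Step 3, where one must combine a Lebesgue-differentiation statement for the path-average $\frac1h\int_t^{t+h}\mathcal{\tilde{A}}^{Z,\tilde{\pi}}\tilde{g}^Z\,\rd u$ with the two nested uniform-integrability hypotheses (Assumptions \ref{assu1:y} and \ref{assu2:y}) required to exchange $\limsup$ with the inner conditional expectation given $Z$ and then with the outer one given $Y_t=y$; in the convex case one additionally needs the continuity input of Assumption \ref{assu3:y} to control $\phi'(\tilde{g}^{Z,\tilde{\pi}_{t,h}})$. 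A secondary subtlety is that the identification $\tilde{g}^z=\tilde{g}^{z,\tilde{\pi}^*}$ in Step 1 relies on the Markov property of $(X^{z,\tilde{\pi}^*},Y^z)$ and on admissibility of $\tilde{\pi}^*$ so that all conditional expectations involved are well defined and finite.
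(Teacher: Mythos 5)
Your proposal is correct and follows essentially the same route as the paper's proof in Appendix B: value identification from condition (b) and the terminal condition, the perturbation estimate via It\^o's formula combined with the supermartingale hypothesis (Assumption \ref{assu0:y}), then the subgradient linearization of $\phi$ (anchored at $\tilde{g}^Z$ in the concave case and at $\tilde{g}^{Z,\tilde{\pi}_{t,h}}$ in the convex case) followed by the Lebesgue-differentiation limit $h\to0^+$ controlled by the uniform-integrability hypotheses (Assumptions \ref{assu1:y} and \ref{assu2:y}) and by Assumption \ref{assu3:y} for the convex case. The only cosmetic discrepancy is that in Step 2 you credit Assumption \ref{assu1:y} with ``legitimizing the expectation of the drift term,'' whereas its actual role (as you use it correctly in Step 3) is to justify exchanging the limit with the inner conditional expectation given $Z$.
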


\begin{proof} See Appendix \ref{app:thm}. \end{proof}

Now we derive the equilibrium strategy $\tilde{\pi}^*$ from condition (a) of Theorem \ref{thm:y}. We can see from \eqref{equ:hjb:y}--\eqref{equ:hjbg:y} that
$$\tilde{\pi}^*\in\arg \sup_{\tilde{\pi}\in\tilde\Pi}\mathbf{E}\left[\phi^\prime\left(\tilde{g}^Z(t,x,y)\right)\mathcal{\tilde{A}}^{ Z,\tilde{\pi}} \tilde{g}^Z(t,x,y)\mid Y_t=y\right].$$
The terms on the right-hand side involving $\tilde\pi$ are
\begin{equation}\label{termspi}
\begin{split}
&\Ebf\left[\phi^\prime\left(\tilde{g}^Z(t,x,y)\right)\tilde{g}^Z_x(t,x,y)(Z-r)\mid Y_t=y\right]\tilde\pi(t,x,y)\\
+&\sigma^2\Ebf\left[\phi^\prime\left(\tilde{g}^Z(t,x,y)\right)\tilde{g}^Z_{xy}(t,x,y)\mid Y_t=y\right]\tilde\pi(t,x,y)\\+&{1\over2}\sigma^2\Ebf\left[\phi^\prime\left(\tilde{g}^Z(t,x,y)\right)\tilde{g}^Z_{xx}(t,x,y)\mid Y_t=y\right]\tilde\pi^2(t,x,y).
\end{split}
\end{equation}
Then by maximizing \eqref{termspi} in $\tilde\pi$, the equilibrium strategy $\tilde{\pi}^*$ satisfies
\begin{equation}\label{ofc:y}
	\begin{split}
		\tilde{\pi}^*(t,x,y)=&
		-\frac{\mathbf{E}\left[\phi^\prime(\tilde{g}^Z(t,x,y)){\tilde g}_{x}^ Z(t,x,y)( Z-r) \mid  Y_t=y\right]}
		{\sigma^2 \mathbf{E}\left[\phi^\prime(\tilde{g}^Z(t,x,y))\tilde{g}^Z_{xx}(t,x,y) \mid Y_t=y\right]}\\
		&-\frac{\mathbf{E}\left[\phi^\prime(\tilde{g}^Z(t,x,y))\tilde{g}^Z_{xy}(t,x,y) \mid Y_t=y\right]}
		{\mathbf{E}\left[\phi^\prime(\tilde{g}^Z(t,x,y))\tilde{g}^Z_{xx}(t,x,y) \mid Y_t=y\right]}
	\end{split}
\end{equation}
if such a $\tilde{\pi}^*$ is admissible.

Let $\cov(\,\cdot \mid Y_t=y)$ and $\var(\,\cdot \mid Y_t=y)$  denote the covariance and the variance under the conditional probability $\mathbb{P}(\,\cdot\mid Y_t=y)$, respectively. We have  decomposition
\begin{align*}
&\mathbf{E}\left[\phi^\prime(\tilde{g}^Z(t,x,y)){\tilde g}_{x}^ Z(t,x,y)( Z-r) \mid  Y_t=y\right]\\
=&\mathbf{E}\left[\phi^\prime(\tilde{g}^Z(t,x,y)){\tilde g}_{x}^ Z(t,x,y)\mid  Y_t=y\right] \cdot \mathbf{E}\left[ Z-r \mid  Y_t=y\right]\\
&+\cov(\phi^\prime(\tilde{g}^Z(t,x,y)){\tilde g}_{x}^ Z(t,x,y), Z-r \mid  Y_t=y)
\end{align*}
Then we can write \eqref{ofc:y} as
\begin{equation}\label{ofc2:y}
	\begin{split}
		\tilde{\pi}^*(t,x,y)=&
		\underbrace{-\frac{\mathbf{E}\left[\phi^\prime(\tilde{g}^Z(t,x,y)){\tilde g}_{x}^ Z(t,x,y) \mid  Y_t=y\right]}
		{\sigma^2\mathbf{E}\left[\phi^\prime(\tilde{g}^Z(t,x,y))\tilde{g}^Z_{xx}(t,x,y)  \mid Y_t=y\right]}\left(\Ebf[Z\mid Y_t=y]-r\right)}_{\tilde\pi^M}
		\\& \underbrace{-\frac{\cov\left(\phi^\prime(\tilde{g}^Z(t,x,y)){\tilde g}_{x}^ Z(t,x,y), Z-r \mid  Y_t=y\right)}
		{\sigma^2\mathbf{E}\left[\phi^\prime(\tilde{g}^Z(t,x,y))\tilde{g}^Z_{xx}(t,x,y) \mid Y_t=y\right]}}_{\tilde\pi^Z}\\
		&
		\underbrace{-\frac{\mathbf{E}\left[\phi^\prime(\tilde{g}^Z(t,x,y))\tilde{g}^Z_{xy}(t,x,y) \mid Y_t=y\right]}
			{\mathbf{E}\left[\phi^\prime(\tilde{g}^Z(t,x,y))\tilde{g}^Z_{xx}(t,x,y) \mid Y_t=y\right]}}_{\tilde\pi^Y}.
		%\\
		%		&=-\frac{\int_{\mathbf{R} }\phi^\prime(\tilde{g}^z){\tilde g}_{x}^z(z-r) p\rd z}{\int_{\mathbf{R} }\phi^\prime(\tilde{g}^z)\tilde{g}^z_{xx}\sigma^2 p\rd z}
		%		-\frac{\int_{\mathbf{R} }\phi^\prime(\tilde{g}^z){\tilde g}_{x\beta}^z\zeta(t) p\rd z}{\int_{\mathbf{R} }\phi^\prime(\tilde{g}^z)\tilde{g}^z_{xx}\sigma^2 p\rd z}.
	\end{split}
\end{equation}
%Now we are going to give a brief financial interpretation to the above decomposition.
%Recalling \eqref{equ:x}, the instantaneous excess return of portfolio $\tilde\pi$ is
%$$\mathrm{d}X^{\tilde\pi}_t-rX^{\tilde\pi}_t\mathrm{d}t=\tilde{\pi}(t,X^{\tilde{\pi}}_t,Y_t)[\varphi(t,Y_t)-r]\mathrm{d}t
%	+\tilde{\pi}(t,X^{\tilde{\pi}}_t,Y_t)\sigma\mathrm{d}W^S_t.$$
%We also have
%$$\mathrm{d}X^{\tilde\pi}_t-rX^{\tilde\pi}_t\mathrm{d}t=\tilde{\pi}(t,X^{\tilde{\pi}}_t,Y_t)(Z-r)\mathrm{d}t
%	+\tilde{\pi}(t,X^{\tilde{\pi}}_t,Y_t)\sigma\mathrm{d}W_t.$$
%Moreover, $\varphi(t,y)=\mathbf{E}\left[Z \mid Y_t=y\right]$.
%Then \eqref{termspi} roughly reads
%{\color{red}
%\begin{align*}
%&\Ebf\left[\phi^\prime\left(\tilde{g}^Z(t,x,y)\right)\tilde{g}^Z_x(t,x,y)\mid Y_t=y\right] {\Ebf[\mathrm{d}X^{\tilde\pi}_t-rX^{\tilde\pi}_t\mathrm{d}t\mid Y_t=y]\over \mathrm{d}t}
%\\
%+&{\cov(\phi^\prime\left(\tilde{g}^Z(t,x,y)\right)\tilde{g}^Z_x(t,x,y), \mathrm{d}X^{\tilde\pi}_t-rX^{\tilde\pi}_t\mathrm{d}t\mid Y_t=y]\over \mathrm{d}t}\\
%+&\Ebf\left[\phi^\prime\left(\tilde{g}^Z(t,x,y)\right)\tilde{g}^Z_{xy}(t,x,y)\mid Y_t=y\right]
%{\cov(\mathrm{d}X^{\tilde\pi}_t-rX^{\tilde\pi}_t\mathrm{d}t, \mathrm{d}Y_t \mid Y_t=y)\over \mathrm{d}t}\\
%+&{1\over2}\Ebf\left[\phi^\prime\left(\tilde{g}^Z(t,x,y)\right)\tilde{g}^Z_{xx}(t,x,y)\mid Y_t=y\right]
%{\var(\mathrm{d}X^{\tilde\pi}_t-rX^{\tilde\pi}_t\mathrm{d}t\mid Y_t=y)\over \mathrm{d}t},
%\end{align*}
%}
The first term $\tilde\pi^M$ in \eqref{ofc2:y} is called myopic demand, which is optimal for an investor that behaves myopically:  taking into account the portfolio's expected return $\Ebf[Z\mid Y_t=y]\tilde\pi(t,x,y)$ (and the volatility $\sigma\tilde\pi(t,x,y)$), but disregarding the effect from the risk of $Y_t$ and the ambiguity of $Z$.
The second term $\tilde\pi^Z$ is the hedging demand associated with the ambiguity of $Z$, which takes into account the correlation between $Z$ and the marginal utility $\phi^\prime(\tilde{g}^Z(t,x,y)){\tilde g}_{x}^ Z(t,x,y)$. The third term $\tilde\pi^Y$ is the hedging demand associated with the risk of the revealing process $Y$, where the effect of the volatility of $Y$ is implicit; see Remark \ref{rmk:hgd} below for a more obvious interpretation, where the effect of volatility of the revealing process $\bbeta$ is explicit.

Now we discuss the case $\phi(u)=u$. 	
%The financial interpretations for hedging demands 1 and 2 are clear in the case $\phi(x)=x$.
Actually, in this case, the smooth ambiguity preference reduces to the expected utility preference. Consider the conditional expected utility of $\tilde{\pi}^*$, which is given by
	$$
	\tilde{g}^{EU}(t,x,y)=\mathbf{E}\left[\tilde{g}^Z(t,x,y) \mid Y_t=y\right],\quad (t,x,y)\in[0,T)\times\Rbf^2.
	$$
	By Proposition \ref{prop:mu} and $\varphi(t,y)=\mathbf{E}\left[Z \mid Y_t=y\right]$, we obtain the following partial derivatives:
	\begin{equation}\label{eq:partialderi}
	\begin{split}
		\tilde{g}^{EU}_t(t,x,y)=&\mathbf{E}\left[\tilde{g}^Z_t(t,x,y) \mid Y_t=y\right]
		+{1\over 2}\cov\left(Z,\tilde{g}^Z(t,x,y) \mid Y_t=y\right)\\
		&-{1\over 2\sigma^2}\cov\left(Z^2,\tilde{g}^Z(t,x,y) \mid Y_t=y\right),\\
		\tilde{g}^{EU}_x(t,x,y)=&\mathbf{E}\left[\tilde{g}^Z_x(t,x,y) \mid Y_t=y\right],\\
		\tilde{g}^{EU}_y(t,x,y)=&\mathbf{E}\left[\tilde{g}^Z_y(t,x,y) \mid Y_t=y\right]+{1\over\sigma^2}\cov(Z,\tilde{g}^Z(t,x,y) \mid Y_t=y)\\
		=&\mathbf{E}\left[\tilde{g}^Z_y(t,x,y) \mid Y_t=y\right]+{1\over\sigma^2}\mathbf{E}\left[\left(Z-\varphi(t,y)\right)\tilde{g}^Z(t,x,y) \mid Y_t=y\right],\\
		\tilde{g}^{EU}_{xx}(t,x,y)=&\mathbf{E}\left[\tilde{g}^Z_{xx}(t,x,y) \mid Y_t=y\right],\\
		\tilde{g}^{EU}_{xy}(t,x,y)=&\mathbf{E}\left[\tilde{g}^Z_{xy}(t,x,y) \mid Y_t=y\right]+{1\over\sigma^2}\mathbf{E}\left[\left(Z-\varphi(t,y)\right){\tilde g}_x^Z(t,x,y) \mid Y_t=y\right],\\
		\tilde{g}^{EU}_{yy}(t,x,y)=&\mathbf{E}\left[\tilde{g}^Z_{yy}(t,x,y) \mid Y_t=y\right]+{1\over\sigma^4}\mathbf{E}\left[\left(Z-\varphi(t,y)\right)^2\tilde{g}^Z(t,x,y) \mid Y_t=y\right]\\&+{2\over\sigma^2}\mathbf{E}\left[\left(Z-\varphi(t,y)\right)\tilde{g}^Z_y(t,x,y) \mid Y_t=y\right]\\
		&-{1\over\sigma^2}\mathbf{E}\left[\varphi_y(t,y)\tilde{g}^Z(t,x,y) \mid Y_t=y\right].
		\end{split}
	\end{equation}
Moreover,
	\begin{equation}\label{eq:varZ:y}
		\varphi_y(t,y)={1\over \sigma^2}\left[\mathbf{E}\left[Z^2 \mid Y_t=y\right]
		-\varphi^2(t,y)\right]={1\over\sigma^2}\var(Z\mid Y_t=y).
	\end{equation}
	Plugging \eqref{eq:partialderi}--\eqref{eq:varZ:y} into $\mathbf{E}\left[\left.\mathcal{\tilde{A}}^{ Z,\tilde{\pi}} \tilde{g}^Z(t,x,y)\right|Y_t=y\right]$, we get
	\begin{align*}
		\mathbf{E}\left[\left.\mathcal{\tilde{A}}^{ Z,\tilde{\pi}} \tilde{g}^Z(t,x,y)\right|Y_t=y\right]=\mathcal{\tilde{A}}^{\tilde{\pi}}_0\tilde{g}^{EU}(t,x,y),
	\end{align*}
	where $\mathcal{\tilde{A}}^{\tilde{\pi}}_0$ is the infinitesimal operator for \eqref{eq:dY} and \eqref{equ:x}:
	\begin{align*}
		\mathcal{\tilde{A}}^{\tilde{\pi}}_0f\trieq f_t+[rx+\tilde{\pi}  (\varphi-r)]f_x+\left(\varphi-\frac{\sigma^2}{2}\right)f_y+\frac{1}{2}\tilde{\pi}^2\sigma^2f_{xx}
		+\frac{1}{2}\sigma^2 f_{yy}+\tilde{\pi} \sigma^2 f_{xy}.
	\end{align*}
	Then condition (a) of Theorem \ref{thm:y} implies that
	\begin{align*}
		\begin{cases}
			\sup_{\tilde{\pi}\in\tilde\Pi}\mathcal{\tilde{A}}^{\tilde{\pi}}_0 \tilde{g}^{EU}(t,x,y)=0,\quad (t,x,y)\in[0,T)\times\Rbf^2,\\
			\tilde{g}^{EU}(T,x,y)=U(x),\quad (x,y)\in\Rbf^2,
		\end{cases}
	\end{align*}
	which is the HJB equation for the value function of the expected utility maximization. Moreover,   condition (a) of Theorem \ref{thm:y} also implies that
	$$\mathcal{\tilde{A}}^{\tilde{\pi}^*}_0\tilde{g}^{EU}(t,x,y)=\sup_{\tilde{\pi}\in\tilde\Pi}\mathcal{\tilde{A}}^{\tilde{\pi}}_0\tilde{g}^{EU}(t,x,y)$$
	and hence
	\begin{equation}\label{eq:pi*:eu:y}
		\tilde{\pi}^*(t,x,y)=-{{\tilde g}_{x}^{EU}(t,x,y) \over \tilde{g}^{EU}_{xx}(t,x,y)}{\varphi(t,y)-r\over \sigma^2}
		-\frac{\tilde{g}^{EU}_{xy}(t,x,y)}{\tilde{g}^{EU}_{xx}(t,x,y)}.
	\end{equation}
	Therefore, in the case $\phi(u)=u$, the equilibrium solution $\tilde{\pi}^*$ is also the optimal solution for the expected utility maximization.
	
	The first term of \eqref{ofc2:y}  coincides with the first term of \eqref{eq:pi*:eu:y}, which is called myopic demand in the expected utility maximization literature. %Therefore, we call the first term of \eqref{ofc2:y} myopic demand.
	
	The second term of \eqref{eq:pi*:eu:y} is called hedging demand in the expected utility maximization literature and, by \eqref{eq:partialderi}, it can be decomposed as
	\begin{equation}\label{eq:hd:eu:y}
		-\frac{\tilde{g}^{EU}_{xy}(t,x,y)}{\tilde{g}^{EU}_{xx}(t,x,y)}=-\frac{\cov({\tilde g}_{x}^ Z(t,x,y), Z-r \mid  Y_t=y)}{\sigma^2\mathbf{E}\left[\tilde{g}^Z_{xx}(t,x,y)  \mid Y_t=y\right]}-\frac{\mathbf{E}\left[\tilde{g}^Z_{xy}(t,x,y) \mid Y_t=y\right]}{\mathbf{E}\left[\tilde{g}^Z_{xx}(t,x,y) \mid Y_t=y\right]}.
	\end{equation}
The two terms on the right-hand side of \eqref{eq:hd:eu:y} coincide with the second and the third terms of \eqref{ofc2:y}.
%Therefore, the hedging demand in the expected utility maximization literature has two parts,
%Moreover,
%	$$\text{hedging demand 1}=-{{\partial\Ebf[{\tilde g}_{x}^ Z(t,x,y)\mid Y_t=w]\over\partial w}|_{w=y} \over \mathbf{E}\left[\tilde{g}^Z_{xx}(t,x,y) \mid Y_t=y\right]}.$$
	
	%In the case $\alpha=1$, $\phi(x)=x$. \eqref{ofc} can be rewritten as
	%\begin{equation}\label{pi:eu:y}
	%	\begin{split}
	%		\tilde{\pi}^*(t,x,y)=&
	%	-\frac{\mathbf{E}\left[{\tilde g}_{x}^ Z(t,x,y)( Z-r) \mid  Y_t=y\right]}{\mathbf{E}\left[\tilde{g}^Z_{xx}(t,x,y)\sigma^2  \mid Y_t=y\right]}-\frac{\mathbf{E}\left[\tilde{g}^Z_{xy}(t,x,y) \mid Y_t=y\right]}{\mathbf{E}\left[\tilde{g}^Z_{xx}(t,x,y) \mid Y_t=y\right]}\\
	%	=&-\frac{\mathbf{E}\left[{\tilde g}_{x}^ Z(t,x,y)( Z-r) \mid  Y_t=y\right]}{\sigma^2\tilde{g}^{EU}_{xx}(t,x,y)}-\frac{\mathbf{E}\left[\tilde{g}^Z_{xy}(t,x,y) \mid Y_t=y\right]}{\tilde{g}^{EU}_{xx}(t,x,y)}\\
	%	=&-\frac{\mathbf{E}\left[{\tilde g}_{x}^ Z(t,x,y)( Z-r) \mid  Y_t=y\right]}{\sigma^2\tilde{g}^{EU}_{xx}(t,x,y)}\\
	%	&-\frac{\tilde{g}^{EU}_{xy}(t,x,y)-{1\over\sigma^2}\mathbf{E}\left[\left(Z-\varphi(t,y)\right){\tilde g}_x^Z(t,x,y) \mid Y_t=y\right]}{\tilde{g}^{EU}_{xx}(t,x,y)}\\
	%	=&-{{\tilde g}_{x}^{EU}(t,x,y) \over \sigma^2\tilde{g}^{EU}_{xx}(t,x,y)}( \varphi(t,y)-r)-\frac{\tilde{g}^{EU}_{xy}(t,x,y)}{\tilde{g}^{EU}_{xx}(t,x,y)}.
	%	\end{split}
	%\end{equation}
	
	%which is the HJB equation associated with \eqref{eq:dY} and \eqref{equ:x} in the expected utility maximization problem. \eqref{pi:eu} is the related optimal investment policy in the expected utility maximization problem.

\section{$\bbeta$-version Verification Theorem}\label{sec:beta-v}

For every $t\in[0,T]$ and $z\in\range( Z)$, $\bbeta^z_t=\varphi(t,Y^z_t)$.
By \eqref{eq:varZ:y}, for every $t\in[0,T)$, $\varphi(t,\cdot)$ is increasing on $\mathbf{R}$.  Moreover, it is strictly increasing if and only if $ Z$ is not degenerated.
%{\color{blue} We have the following proposition for function $\varphi(\cdot,\cdot)$.
%\begin{proposition}\label{prop1:y}
%	%$\varphi(\cdot,\cdot)$ is a well-defined finite-valued	$C^\infty([0,T]\times\mathbf{R})$ function. Besides,
%For every $t\in[0,T)$, $\varphi(t,\cdot)$ is increasing on $\mathbf{R}$.  Moreover, it is strictly increasing if and only if $ Z$ is not degenerate.
%\end{proposition}
%\begin{proof}
%	%From Theorem 1 in \cite{Bis2019}, the first part obviously holds. Next, we show that $\varphi(t,y)$ is increasing in $y$. We directly have
%	\[
%	\begin{split}
%		\frac{\partial \varphi(t,y)}{\partial y}
%		=\frac{\mathbf{E}[{ Z^2\exp(\frac{ Z}{\sigma^2}y-\frac{  Z^2}{2\sigma^2}t)}]\mathbf{E}[\exp(\frac{ Z}{\sigma^2}y-\frac{  Z^2}{2\sigma^2}t)]-\{\mathbf{E}[ Z\exp(\frac{ Z}{\sigma^2}y-\frac{  Z^2}{2\sigma^2}t)]\}^2}{\sigma^2\{\mathbf{E}[\exp(\frac{ Z}{\sigma^2}y-\frac{  Z^2}{2\sigma^2}t)]\}^2}.
%	\end{split}
%	\]
%	Using Cauchy-Schwarz inequality, we have that $\frac{\partial \varphi(t,y)}{\partial y}\ge 0$. The equality holds if and only if there exists some real number $c$ such that $ Z\sqrt{\exp(\frac{ Z}{\sigma^2}y-\frac{  Z^2}{2\sigma^2}t)}=c \sqrt{\exp(\frac{ Z}{\sigma^2}y-\frac{  Z^2}{2\sigma^2}t)}$ a.s., i.e., $ Z$ is degenerate.
%\end{proof}
Hereafter, we always assume that $Z$ is not degenerated. Then $Y$ and $\bbeta$ generate the same filtration. It is much more natural to choose $\bbeta$ as the revealing process than $Y$. In this section, we translate the $Y$-version results in Section \ref{sec:verification} into the $\bbeta$-version.

Consider the following change of variables:
$$(t,x,y)\to (t,x,\beta)\text{ with } \beta=\varphi(t,y)$$
%Denote the inverse function of $\varphi(t,\cdot)$ as $\varphi^{-1}(t,\cdot)$.
%To distinguish the two versions, we can use notations $\tilde g$, $\tilde\pi$, ${\mathcal{\tilde{A}}}$ for $Y$-version, and $g$, $\pi$, $\mathcal{A}$ for $\bbeta$-version.
and, for every $\tilde g\in C^{1,2,2}([0,T)\times \mathbf{R}^2)\cap C([0,T]\times \mathbf{R}^2)$, let
$$g(t,x,\beta)=\tilde g(t,x,y).$$
Then
\begin{align*}
&\tilde g_t(t,x,y)=g_t(t,x,\beta)+g_\beta(t,x,\beta)\varphi_t(t,y),\\
&\tilde g_x(t,x,y)=g_x(t,x,\beta),\\
&\tilde g_y(t,x,y)=g_\beta(t,x,\beta)\varphi_y(t,y),\\
&\tilde g_{xx}(t,x,y)=g_{xx}(t,x,\beta),\\
&\tilde g_{xy}(t,x,y)=g_{x\beta}(t,x,\beta)\varphi_y(t,y),\\
&\tilde g_{yy}(t,x,y)=g_{\beta\beta}(t,x,\beta)\varphi^2_y(t,y)+g_\beta(t,x,\beta)\varphi_{yy}(t,y).
\end{align*}
%and
%\[\color{red}
%\begin{split}
%\varphi_{yy}(t,x,y)&={1\over\sigma^4}\mathbf{E}
%\left[\left(Z-\varphi(t,y)\right)^2Z^2 \mid Y_t=y\right]
%-\varphi_y(t,y){1\over\sigma^2}\mathbf{E}\left[Z^2\mid Y_t=y\right]
%\\&={1\over \sigma^4}\cov (\left(Z-\varphi(t,y)\right)^2,Z^2\mid Y_t=y).
%\end{split}
%\]

An admissible trading strategy can now be represented by a feedback function
\begin{equation}\label{eq:pibeta}
\pi: (t,x,\beta)\mapsto \pi(t,x,\beta)\trieq\tilde\pi(t,x,y)
\end{equation}
for some $\tilde\pi\in\tilde\Pi$.
Let $$\Pi=\{\pi\mid \pi\text{ satisfies \eqref{eq:pibeta} for some }\tilde\pi\in\tilde\Pi\}.$$

For every $z\in\range(Z)$ and $\pi\in\Pi$,  the infinitesimal generator $\mathcal{A}^{z,\pi}$ for the state processes $(X,\bbeta)$ is given by, for $f\in C^{1,2,2}([0,T)\times \mathbf{R}^2)\cap C([0,T]\times \mathbf{R}^2)$,
\begin{equation*}
	\begin{split}
	&\mathcal{A}^{z,\pi}f(t,x,\beta)\trieq \mathcal{\tilde A}^{z,\tilde\pi}\tilde f(t,x,y)\\
	=&f_t(t,x,\beta)+[rx+\pi(t,x,\beta)  (z-r)]f_x(t,x,\beta)\\
	&+\left[\varphi_t(t,y)+\left(z-\frac{\sigma^2}{2}\right)\varphi_y(t,y)+{1\over 2}\sigma^2\varphi_{yy}(t,y)\right]f_\beta(t,x,\beta)
	\\&+\frac{1}{2}\pi^2\sigma^2f_{xx}(t,x,\beta)
	+\frac{1}{2}\sigma^2\varphi^2_y(t,y) f_{\beta\beta}(t,x,\beta)+\pi \sigma^2\varphi_y(t,y)f_{x\beta}(t,x,\beta)\\
	=&f_t(t,x,\beta)+[rx+\pi(t,x,\beta)  (z-r)]f_x(t,x,\beta)\\
	&+\left[\varphi_t(t,\varphi^{-1}(t,\beta))+\left(z-\frac{\sigma^2}{2}\right)\varphi_y(t,\varphi^{-1}(t,\beta))+{1\over 2}\sigma^2\varphi_{yy}(t,\varphi^{-1}(t,\beta))\right]f_\beta(t,x,\beta)
	\\&+\frac{1}{2}\pi^2\sigma^2f_{xx}(t,x,\beta)
	+\frac{1}{2}\sigma^2\varphi^2_y(t,\varphi^{-1}(t,\beta)) f_{\beta\beta}(t,x,\beta)+\pi \sigma^2\varphi_y(t,\varphi^{-1}(t,\beta))f_{x\beta}(t,x,\beta).
	\end{split}
\end{equation*}

%The following assumptions on functions $\{g^z, z\in{\text{Range}(Z)}\}$ will be used in the formulation of the verification theorem.
Assumptions \ref{assu0:y}--\ref{assu3:y} are equivalent to the followings assumptions.
\begin{assumption}\label{assu0}
	For any $\pi\in\Pi$ and $(t,x,\beta)\in[0,T)\times\Rbf^2$,   there exists $\tilde{t}\in(t,T)$ such that, $\forall z\in\range(Z)$,  under the conditional probability $\mathbb{P}[\,\cdot\mid X^{z,\pi}_t=x,\bbeta^z_t=\beta]$, the following two families are supermartingales w.r.t. filtration $\{\F^W_{s}\}$,
	\begin{align*}
		&\left\{\int_t^sg^z_x(u,X^{z,\pi}_u,\bbeta^{z}_u) \pi(u,X^{z,\pi}_u,\bbeta^{z}_u) \mathrm{d}W_u\right\}_{s\in[t,\tilde{t}]},\\
		&\left\{\int_t^sg^z_\beta(u,X^{z,\pi}_u,\bbeta^{z}_u)
		\varphi_y(u,\varphi^{-1}(u,\bbeta^z_u))\mathrm{d}W_u\right\}_{s\in[t,\tilde{t}]}.
	\end{align*}
\end{assumption}

\begin{assumption}\label{assu1}
	For any $z\in\range(Z)$, $\pi\in\Pi$  and $(t,x,\beta)\in[0,T)\times\Rbf^2$,  there exists $\tilde{t}\in(t,T)$ such that,  under the conditional probability $\mathbb{P}[\,\cdot\mid X^{z,\pi}_t=x,\bbeta^z_t=\beta]$, the family
	\begin{equation*}\label{equ:assu1}
		\left\{\frac{1}{h}\int_t^{t+h}\mathcal{A}^{z,\pi} g^z(u,X^{z,\pi}_u,\bbeta^{z}_u)\mathrm{d}u\right\}_{0<h\le \tilde{t}-t}
	\end{equation*}
	is uniformly integrable.
\end{assumption}

\begin{assumption}\label{assu2}
	For any $\pi\in\Pi$ and $(t,x,\beta)\in[0,T)\times\Rbf^2$,  there exists $\tilde{t}\in(t,T)$ such that,
	under the conditional probability $\mathbb{P}[\,\cdot\mid \bbeta^z_t=\beta]$,
	the family
	\begin{equation*}\label{equ:assu2}
		\left\{\phi^\prime\left(g^Z(t,x,\beta)\right)\mathbf{E}\left[\left.\frac{1}{h}\int_t^{t+h}\mathcal{A}^{Z,\pi} g^Z(u,X^{Z,\pi}_u,\bbeta^{Z}_u)\mathrm{d}u\,\right|Z,X^{Z,\pi}_t=x,\bbeta^Z_t=\beta\right]\right\}_{0<h\le\tilde{t}-t}
	\end{equation*}
	is uniformly integrable  in the case of concave $\phi$, or, the family
	\begin{equation*}\label{equ:assu22}
		\left\{\phi^\prime\left(g^{Z, \pi_{t,h}}(t,x,\beta)\right)\mathbf{E}\left[\left.\frac{1}{h}\int_t^{t+h}\mathcal{A}^{Z,\pi} g^Z(u,X^{Z,\pi}_u,\bbeta^{Z}_u)\mathrm{d}u\,\right|Z,X^{Z,\pi}_t=x,\bbeta^Z_t=\beta\right]\right\}_{0<h\le\tilde{t}-t}
	\end{equation*}
	is uniformly integrable in the case of convex $\phi$.
\end{assumption}

\begin{assumption}\label{assu3}
	In the case of convex $\phi$, for any $z\in\range(Z)$, $\pi\in\Pi$  and $(t,x,\beta)\in[0,T)\times\Rbf^2$,
	$$\lim_{h\to 0^+}\mathbf{E} \left[
	g^z(t+h,X^{z,\pi}_{t+h},\bbeta^{z}_{t+h})
	\mid X^{z,\pi}_t=x, \bbeta^z_t=\beta\right]=g^z(t,x,\beta).$$
\end{assumption}

 Theorem \ref{thm:y} can be reformulated as the following $\bbeta$-version verification theorem.
\begin{theorem}[$\bbeta$-Version Verification Theorem]\label{thm}
	Assume that $U$ is an increasing concave $C^2$ function and $\phi$ is an increasing concave/convex $C^1$ function.
	Consider a family of functions
	$$\left\{g^z, z\in\range( Z)\right\}\subset C^{1,2,2}([0,T)\times \mathbf{R}^2)\cap C([0,T]\times \mathbf{R}^2)$$
	and an admissible strategy $\pi^*\in\Pi$.  Assume that they satisfy the following three conditions:
	\begin{description}
		\item[(a)] $\forall (t,x,\beta)\in[0,T]\times \mathbf{R}^2 $,
		\begin{align}
			&\sup_{\pi\in \Pi}\mathbf{E}\left[\phi^\prime\left(g^ Z(t,x,\beta)\right)\mathcal{A}^{ Z,\pi} g^ Z(t,x,\beta)\mid \bbeta_t=\beta\right]=0, \label{equ:hjb}
			\\ &\mathcal{A}^{z,\pi^*}g^z(t,x,\beta)=0, \label{equ:hjbg}
			\\ &g^z(T,x,\beta)=U(x);\label{equ:hjbg2}
		\end{align}
		\item[(b)] %$\forall (t,x,\beta)\in[0,T]\times \mathbf{R}^2 $ and $z\in\range(Z)$,
		$\forall z\in\range(Z)$, $\left\{g^z\left(t,X^{z,\pi^*}_t,\bbeta^{z}_t\right)\right\}_{t\in[0,T]}$ is a martingale w.r.t. filtration $\{\mathcal{F}^W_t\}$;
		\item[(c)] $\left\{g^z, z\in\range(Z)\right\}$ satisfies Assumptions \ref{assu0}--\ref{assu3}.
	\end{description}
	Then $\pi^*$ is an equilibrium strategy and, $\forall z\in\range(Z)$ and $(t,x,\beta)\in[0,T]\times \mathbf{R}^2$,
	$$\mathbf{E}\left[U(X^{\pi^*}_T)\mid Z=z,X^{\pi^*}_T=x,\bbeta_t=\beta\right]=g^z(t,x,\beta).$$
\end{theorem}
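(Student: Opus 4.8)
The plan is to derive Theorem \ref{thm} entirely as a corollary of Theorem \ref{thm:y} by pushing everything through the diffeomorphism $y\mapsto\beta=\varphi(t,y)$, which for each fixed $t\in[0,T)$ is a strictly increasing smooth bijection of $\mathbf{R}$ onto the (open) range of $\varphi(t,\cdot)$ since $Z$ is non-degenerate and $\varphi_y(t,y)=\sigma^{-2}\var(Z\mid Y_t=y)>0$ by \eqref{eq:varZ:y}. First I would fix the correspondence: given a candidate family $\{g^z\}\subset C^{1,2,2}\cap C$ and $\pi^*\in\Pi$ for the $\bbeta$-problem, define $\tilde g^z(t,x,y)\trieq g^z(t,x,\varphi(t,y))$ and $\tilde\pi^*(t,x,y)\trieq\pi^*(t,x,\varphi(t,y))$; since $\varphi\in C^\infty$, the chain-rule formulas displayed just before Assumption \ref{assu0} show $\tilde g^z\in C^{1,2,2}\cap C$, and by construction of $\Pi$ in \eqref{eq:pibeta} we have $\tilde\pi^*\in\tilde\Pi$ with the same wealth process $X^{z,\tilde\pi^*}=X^{z,\pi^*}$ and $\bbeta^z_t=\varphi(t,Y^z_t)$. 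The key algebraic identity is the operator intertwining already recorded in the excerpt, $\mathcal{A}^{z,\pi}g^z(t,x,\beta)=\mathcal{\tilde A}^{z,\tilde\pi}\tilde g^z(t,x,y)$ whenever $\beta=\varphi(t,y)$; this is exactly the content of the displayed computation defining $\mathcal{A}^{z,\pi}$, so I would simply invoke it.

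Next I would verify that conditions (a)--(c) of Theorem \ref{thm} for $(\{g^z\},\pi^*)$ translate term-by-term into conditions (a)--(c) of Theorem \ref{thm:y} for $(\{\tilde g^z\},\tilde\pi^*)$. For (a): \eqref{equ:hjbg2} becomes \eqref{equ:hjbg2:y} trivially since $\varphi(T,\cdot)$ need not even enter (the terminal condition is $U(x)$ regardless); \eqref{equ:hjbg} becomes \eqref{equ:hjbg:y} by the operator identity applied with $\pi=\pi^*$; and \eqref{equ:hjb} becomes \eqref{equ:hjb:y} because, again by the operator identity, $\mathbf{E}[\phi'(g^Z(t,x,\beta))\mathcal{A}^{Z,\pi}g^Z(t,x,\beta)\mid\bbeta_t=\beta]=\mathbf{E}[\phi'(\tilde g^Z(t,x,y))\mathcal{\tilde A}^{Z,\tilde\pi}\tilde g^Z(t,x,y)\mid Y_t=y]$ — here one uses that conditioning on $\bbeta_t=\beta$ is the same as conditioning on $Y_t=y$ when $\beta=\varphi(t,y)$ (the two $\sigma$-fields coincide) — and that the map $\tilde\pi\mapsto\pi$ given by \eqref{eq:pibeta} is a bijection between $\tilde\Pi$ and $\Pi$, so the suprema over the two strategy sets agree. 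For (b): $\tilde g^z(t,X^{z,\tilde\pi^*}_t,Y^z_t)=g^z(t,X^{z,\pi^*}_t,\bbeta^z_t)$ pathwise, so one process is an $\{\F^W_t\}$-martingale iff the other is. For (c): Assumptions \ref{assu0}--\ref{assu3} were asserted in the excerpt to be equivalent to Assumptions \ref{assu0:y}--\ref{assu3:y}, so this is immediate; I would nonetheless spell out the one substantive point, namely that $g^z_\beta(u,X^{z,\pi}_u,\bbeta^z_u)\varphi_y(u,\varphi^{-1}(u,\bbeta^z_u))=\tilde g^z_y(u,X^{z,\tilde\pi}_u,Y^z_u)$, which is precisely why the second supermartingale family in Assumption \ref{assu0} matches that in Assumption \ref{assu0:y}, and similarly for the uniform-integrability families, which involve $\mathcal{A}^{z,\pi}g^z$ and hence match via the operator identity.

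With the translation in place, Theorem \ref{thm:y} yields that $\tilde\pi^*$ is an equilibrium strategy and $\mathbf{E}[U(X^{\tilde\pi^*}_T)\mid Z=z,X^{\tilde\pi^*}_t=x,Y_t=y]=\tilde g^z(t,x,y)$. It remains to transport these two conclusions back. The value-function identity is immediate: $\tilde g^z(t,x,y)=g^z(t,x,\varphi(t,y))=g^z(t,x,\beta)$ and the conditioning events $\{X_t=x,Y_t=y\}$ and $\{X_t=x,\bbeta_t=\beta\}$ coincide, giving $\mathbf{E}[U(X^{\pi^*}_T)\mid Z=z,X^{\pi^*}_t=x,\bbeta_t=\beta]=g^z(t,x,\beta)$. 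For the equilibrium property: Definition \ref{eq} compares $J(t,x,y,\tilde\pi_{t,h})$ with $J(t,x,y,\tilde\pi^*)$ along spike variations, and since $J$ depends only on the law of $X^{\tilde\pi}_T$ given the current state — and the reparametrization $\tilde\pi\leftrightarrow\pi$ preserves the wealth dynamics and maps spike variations to spike variations — the limsup inequality \eqref{mv1} for $\tilde\pi^*$ in the $Y$-coordinates is the same inequality for $\pi^*$ in the $\bbeta$-coordinates; one only checks that $(\pi^*)_{t,h}=(\tilde\pi^*)_{t,h}$ under the correspondence, which is clear from the definition of the spike. I expect the only place needing care — and the one I would state explicitly rather than wave at — is the claim that conditioning on $\bbeta_t=\beta$ and on $Y_t=y$ produce identical conditional expectations; this rests on $\bbeta_t=\varphi(t,Y_t)$ with $\varphi(t,\cdot)$ a Borel isomorphism onto its range (non-degeneracy of $Z$ via \eqref{eq:varZ:y}), so that $\sigma(\bbeta_t)=\sigma(Y_t)$, and it is what makes the equivalence of the two verification theorems genuinely an equivalence rather than merely an implication.
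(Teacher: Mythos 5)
Your proof is correct and takes essentially the same approach the paper uses: the paper presents Theorem \ref{thm} as a direct reformulation of Theorem \ref{thm:y} under the change of variables $\beta=\varphi(t,y)$, and you have spelled out the correspondence in full (the chain-rule translation of derivatives, the operator intertwining $\mathcal{A}^{z,\pi}g^z(t,x,\beta)=\mathcal{\tilde A}^{z,\tilde\pi}\tilde g^z(t,x,y)$, the equivalence of Assumptions \ref{assu0}--\ref{assu3} with \ref{assu0:y}--\ref{assu3:y}, and the identity $\sigma(\bbeta_t)=\sigma(Y_t)$ from non-degeneracy of $Z$).
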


%\begin{proof} See Appendix \ref{app:thm}. \end{proof}

%Now we derive the equilibrium strategy $\pi^*$ from condition (a) of Theorem \ref{thm}. We can see from \eqref{equ:hjb}-- \eqref{equ:hjbg} that
%$$\pi^*\in\arg \sup_{\pi\in \Pi}\mathbf{E}\left[\phi^\prime\left(g^ Z(t,x,\beta)\right)\mathcal{A}^{ Z,\pi} g^ Z(t,x,\beta)\mid \bbeta_t=\beta\right].$$
%Then by the first-order condition of optimality, the equilibrium strategy $\pi^*$ satisfies
By \eqref{ofc:y}, the $\bbeta$-version equilibrium strategy $\pi^*$ satisfies
\begin{equation}\label{ofc}
	\begin{split}
		\pi^*(t,x,\beta)=&
		-\frac{\mathbf{E}\left[\phi^\prime(g^ Z(t,x,\beta))g_{x}^ Z(t,x,\beta)( Z-r) \mid  \bbeta_t=\beta\right]}
		{\sigma^2 \mathbf{E}\left[\phi^\prime(g^ Z(t,x,\beta))g^ Z_{xx}(t,x,\beta)  \mid \bbeta_t=\beta\right]}\\
		&-\frac{\varphi_y(t,y) \mathbf{E}\left[\phi^\prime(g^ Z(t,x,\beta))g^ Z_{x\beta}(t,x,\beta)\mid \bbeta_t=\beta\right]}
		{\mathbf{E}\left[\phi^\prime(g^ Z(t,x,\beta))g^ Z_{xx}(t,x,\beta) \mid \bbeta_t=\beta\right]}
	\end{split}
\end{equation}
if such a $\pi^*$ is admissible.

%Let $\cov(\,\cdot \mid \bbeta_t=\beta)$ and $\var(\,\cdot \mid \bbeta_t=\beta)$  denote the covariance and the variance under the conditional probability $\mathbb{P}(\,\cdot\mid \bbeta_t=\beta)$, respectively.
%Recalling $\varphi(t,y)=\mathbf{E}\left[Z \mid \bbeta_t=\beta\right]$, we can write \eqref{ofc} as
Similarly to \eqref{ofc2:y}, we can write \eqref{ofc} as
\begin{equation}\label{ofc2}
	\begin{split}
		\pi^*(t,x,\beta)=&
		\underbrace{-\frac{\mathbf{E}\left[\phi^\prime(g^ Z(t,x,\beta))g_{x}^ Z(t,x,\beta) \mid  \bbeta_t=\beta\right]}
		{\sigma^2 \mathbf{E}\left[\phi^\prime(g^ Z(t,x,\beta))g^ Z_{xx}(t,x,\beta)  \mid \bbeta_t=\beta\right]}\left(\beta-r\right)}_{\pi^M}
		\\& \underbrace{-\frac{\cov\left(\phi^\prime(g^ Z(t,x,\beta))g_{x}^ Z(t,x,\beta), Z-r \mid  \bbeta_t=\beta\right)}
		{\sigma^2 \mathbf{E}\left[\phi^\prime(g^ Z(t,x,\beta))g^ Z_{xx}(t,x,\beta)  \mid \bbeta_t=\beta\right]}}_{\pi^Z}\\
		&
		\underbrace{-\frac{\varphi_y(t,\varphi^{-1}(t,\beta)) \mathbf{E}\left[\phi^\prime(g^ Z(t,x,\beta))g^ Z_{x\beta}(t,x,\beta) \mid \bbeta_t=\beta\right]}
			{\mathbf{E}\left[\phi^\prime(g^ Z(t,x,\beta))g^ Z_{xx}(t,x,\beta) \mid \bbeta_t=\beta\right]}}_{\pi^{\bbeta}}.
	\end{split}
\end{equation}

\begin{remark}\label{rmk:hgd}
Note that $\varphi_y$ is the ratio of the volatility of $\bbeta_t$ to the volatility of $S_t$. Therefore,
 $\pi^{\bbeta}$ is formally linear w.r.t. the volatility of the revealing process $\bbeta$.
The volatility of $Y_t$ coincides with the volatility of $S_t$ and hence the ratio of the two volatilities is $1$.
So the effect to $\tilde\pi^Y$ from the volatility of $Y$ is implicit.
This observation confirms that the third term is related to the risk of the revealing process.
\end{remark}

\section{Closed-Form Solution: A Special Case}\label{sec:special}

\subsection{The Special Model}

Now we consider the case of Gaussian prior.
Assume that $Z$ is Gaussian:
\[ Z\sim N\left(\beta_0,\sigma_0^2\right),\]
where $\beta_0\in\mathbf{R} $ and $\sigma_0\in\mathbf{R}$ are known constants.  By \citet[Propositions 10--11 and Remark 5]{Bis2019}, the posterior distribution of $ Z$ given $\mathcal{F}_t^S$ is also Gaussian:
\[ Z|\mathcal{F}_t^S\sim N\left(\bbeta_t,\zeta(t)\right),\]
where
\begin{eqnarray*}
	\zeta(t)={\sigma^2\sigma_0^2\over\sigma^{2}+t\sigma^{2}_0}\quad\text{and}\quad
	\bbeta_t=\zeta(t)\left\{{Y_t-Y_0\over\sigma^2}+\frac{t}{2}+{\beta_{0}\over\sigma_0^{2} }\right\}.
	\end{eqnarray*}
{In this case, $\varphi(t,y)=\zeta(t)\left\{{y-Y_0\over\sigma^2}+\frac{t}{2}+{\beta_{0}\over\sigma_0^{2} }\right\}$.} Moreover, $\bbeta$ satisfies the following SDE:
$$\rd \bbeta_t={\zeta(t)\over\sigma}\rd W^S_t={\zeta(t)\over\sigma^2}[(Z-\bbeta_t)\rd t+\sigma \rd W_t].$$	
	
Obviously, $\bbeta$ and $Y$ generate the same filtration.  Therefore, the systems \eqref{eq:dS:gtY}--\eqref{eq:dY} and \eqref{equ:x:mu}--\eqref{equ:y} can be reformulated with $\bbeta$ instead of $Y$. Actually,
the system \eqref{eq:dS:gtY}--\eqref{eq:dY} is equivalent to
\begin{align*}%\label{eq:dS:beta}
&{\rd S_t\over S_t}=\bbeta_t\rd t+\sigma\rd W^S_t,\\
                       %\label{eq:dbeta}
&\rd \bbeta_t%=\left(\beta(t)-{\sigma^2\over2}\right)\rd t+\sigma \rd W^S_t
	={\zeta(t)\over\sigma} \rd W^S_t.
\end{align*}
Then equation \eqref{equ:x} can be reformulated as
\begin{align}\label{equ:x:beta}
		\mathrm{d}X^{\pi}_t=rX^{\pi}_t\mathrm{d}t+\pi(t,X^{\pi}_t,\bbeta_t)[\bbeta_t-r]\mathrm{d}t
		+\pi(t,X^{\pi}_t,\bbeta_t)\sigma\mathrm{d}W^S_t. %,\qquad t\in[0,T],\\
%\label{equ:beta1}
%		&\rd \bbeta_t={\zeta(t)\over\sigma} \rd W^S_t,\quad t\in[0,T].
\end{align}
Then the inside system \eqref{equ:x:mu}--\eqref{equ:y} can be reformulated as the following $\bbeta$-version one:
\begin{align}\label{equ:xbeta:mu}
		&\mathrm{d}X^{z,\pi}_t=rX^{z,\pi}\mathrm{d}t+\pi(t,X^{z,\pi}_t,\bbeta^{z}_t)(z-r)\mathrm{d}t+\pi(s,X^{z,\pi}_t,\bbeta^{z}_t)\sigma\mathrm{d}W_t,%\qquad t\in[0,T],
\\
\label{equ:beta:mu}
	&	\rd \bbeta^{z}_t={\zeta(t)\over\sigma^2} \left[(z-\bbeta^{z}_t)\rd t+\sigma \rd W_t\right]. %,\quad t\in[0,T].
\end{align}

%{\color{red}Replacing $(\mathcal{A},Y)$ with $(\mathcal{A},\bbeta)$, we obtain a reformulation of the verification theorem, which will be called \textit{$\bbeta$-version verification theorem}.}

Assume that utility function $U$ is CARA, i.e.,
$$U(x)=-{1\over k}e^{-kx},\quad x\in\mathbb{R},$$
where $k>0$.  Function $\phi$ is given by
$$\phi(u)=
\begin{cases}
-{1\over\alpha}(-u)^\alpha & \text{if }\alpha\ne0,\\
-\log(-u) &\text{if }\alpha=0,
\end{cases}\qquad
u\in(-\infty,0),
$$where $\alpha\in\mathbf{R}$.
Obviously, $\phi^\prime(u)=(-u)^{\alpha-1}$ and $\phi^{\prime\prime}(u)=(1-\alpha)(-u)^{\alpha-2}$. Therefore,
$\phi$ is strictly concave if $\alpha>1$, linear if $\alpha=1$, and strictly convex if $\alpha<1$.

\begin{definition}[Admissible Strategy]\label{def:ad}
Given the above Gaussian prior, for the above specific $\phi$ and $U$, we say \eqref{eq:pibeta} is an admissible strategy if it satisfies the following two conditions:
\begin{description}\label{def:admissible}
\item[(i)] SDE \eqref{equ:x:beta} has a unique strong solution $X^{\pi}$;
% with given states  $X^{\pi}_t=x$ and  $\bbeta_t=\beta$ for every $(t,x,\beta)$;
\item[(ii)]  For every $(t,x,\beta)\in[0,T)\times\Rbf^2$, $\epsilon>0$ and $\rho>2$, there exist constants $\delta>2$, $\tilde{t}\in(t,T)$  and $C>0$ such that, for every $z\in\range(Z)$,
\begin{align}
	&\sup_{s\in[t,\tilde{t}]}\mathbf{E}
	\left[\left.e^{-\delta k e^{r(T-s)}X^{z,\pi}_s}\,\right|\,X^{z,\pi}_t=x,\bbeta^z_t=\beta\right]\leq Ce^{\epsilon z^2}, \label{inequ01}
	\\ &\sup_{s\in[t,\tilde{t}]}\mathbf{E}
	\left[\left.|\pi(s,X^{z,\pi}_s,\bbeta^{z}_s)|^\rho\, \right|\, X^{z,\pi}_t=x,\bbeta^z_t=\beta \right]\leq Ce^{\epsilon z^2}.\label{inequ02}
\end{align}
\end{description}
\end{definition}

The following proposition shows the abundance of admissible strategies.

\begin{proposition}\label{prop:admissible}
Assume that \eqref{eq:pibeta} satisfies condition (i) of Definition \ref{def:admissible} and that there exists $C>0$  such that
\begin{equation}
	|\pi(t,x,\beta)|\leq C\left(1+|\beta|+\sqrt{\log(1+|x|)}\right),\quad (t,x,\beta)\in[0,T)\times\Rbf^2.\label{inequ022}
\end{equation}
Then $\pi$ is admissible.
\end{proposition}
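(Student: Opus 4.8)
The plan is to verify condition (ii) of Definition~\ref{def:ad}, i.e. the two bounds \eqref{inequ01}--\eqref{inequ02}, by a Girsanov change of measure that simultaneously strips the $z$‑dependent drift from the inside wealth dynamics and manufactures the prescribed factor $e^{\epsilon z^2}$. Fix $(t,x,\beta)$, $\epsilon>0$, $\rho>2$; take $\delta=3$ (any value works) and choose $\tilde t$ close to $t$ at the very end. For each $z$, let $\tilde{\mathbb P}^z$ have, on $\{\F_s\}_{s\in[t,T]}$, density the Dol\'eans exponential of $-\tfrac{z-r}{\sigma}(W_\cdot-W_t)$ --- a genuine martingale since $\tfrac{z-r}{\sigma}$ is constant --- so that $W^z_s:=W_s-W_t+\tfrac{z-r}{\sigma}(s-t)$ is a $\tilde{\mathbb P}^z$‑Brownian motion. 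Substituting $\rd W=\rd W^z-\tfrac{z-r}{\sigma}\rd s$ into \eqref{equ:xbeta:mu}--\eqref{equ:beta:mu} turns the inside system into $\rd X^{z,\pi}_s=rX^{z,\pi}_s\rd s+\sigma\pi(s,X^{z,\pi}_s,\bbeta^z_s)\rd W^z_s$ and $\rd\bbeta^z_s=\tfrac{\zeta(s)}{\sigma^2}(r-\bbeta^z_s)\rd s+\tfrac{\zeta(s)}{\sigma}\rd W^z_s$, with $z$‑free coefficients; hence, started from $(x,\beta)$ at $t$, the $\tilde{\mathbb P}^z$‑law of $(X^{z,\pi},\bbeta^z)$ does not depend on $z$. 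In particular $\bbeta^z$ is an Ornstein--Uhlenbeck process with deterministic mean of modulus $\le|\beta|+|r|$ and variance $\le\sigma_0^4T/\sigma^2$ on $[t,T]$, so by Borell--TIS its running maximum $\bbeta^*_s:=\sup_{u\in[t,s]}|\bbeta^z_u|$ satisfies $\tilde{\mathbb P}^z(\bbeta^*_s>\lambda)\le Ce^{-c'\lambda^2}$ with $c'>0$ bounded below uniformly in $z$ and $\tilde t$; and $\hat X_s:=e^{-r(s-t)}X^{z,\pi}_s$ is a continuous $\tilde{\mathbb P}^z$‑local martingale with, pathwise, $\rd\langle\hat X\rangle_s\le C_3\bigl(1+(\bbeta^*_s)^2+\log(1+|\hat X_s|)\bigr)\rd s$, using \eqref{inequ022} and $|X^{z,\pi}_s|\le e^{|r|T}|\hat X_s|$.

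Next I would pass back to $\mathbb P$: for $\F_s$‑measurable $F\ge0$, Cauchy--Schwarz against the density gives $\mathbf E^{\mathbb P}[F]\le\mathbf E^{\tilde{\mathbb P}^z}[F^2]^{1/2}\exp\bigl(\tfrac{(z-r)^2}{2\sigma^2}(s-t)\bigr)$, and $(z-r)^2\le2z^2+2r^2$ makes the last factor $\le e^{\epsilon r^2}e^{\epsilon z^2}$ once $\tilde t-t\le\epsilon\sigma^2$. Since $e^{-3ke^{r(T-s)}X^{z,\pi}_s}=e^{-3ke^{r(T-t)}\hat X_s}$, so that $F^2\le e^{2\bar c|\hat X_s|}$ with $\bar c:=3ke^{|r|T}$, \eqref{inequ01} reduces to the $z$‑free estimate $\sup_{s\in[t,\tilde t]}\mathbf E^{\tilde{\mathbb P}^z}[e^{2\bar c|\hat X_s|}]\le C$. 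Applying the same conversion to $F=|\pi(s,X^{z,\pi}_s,\bbeta^z_s)|^{\rho}$ and using $|\pi(s,X^{z,\pi}_s,\bbeta^z_s)|^{2\rho}\le C(1+(\bbeta^*_s)^{2\rho}+\log^{\rho}(1+|\hat X_s|))$ --- whose $\bbeta$‑part has finite $z$‑free moments and whose logarithmic part is dominated by any polynomial moment of $\hat X_s$ --- shows that \eqref{inequ02} follows from the same exponential estimate (alternatively, \eqref{inequ02} can be obtained by a direct $L^p$ Gronwall estimate for $X^{z,\pi}$ under $\mathbb P$, the $\sqrt{\log}$‑growth in \eqref{inequ022} making that routine).

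Everything thus reduces to $\sup_{s\in[t,\tilde t]}\mathbf E^{\tilde{\mathbb P}^z}[e^{2\bar c|\hat X_s|}]\le C$, uniformly in $z$, for $\tilde t$ close enough to $t$. Here the plan is Dambis--Dubins--Schwarz: write $\hat X_s=x+B_{A_s}$ with $B$ a Brownian motion and $A_s=\langle\hat X\rangle_s$, so $B^*_s:=\sup_{a\le A_s}|B_a|=\sup_{u\in[t,s]}|\hat X_u-x|$ and hence $A_s\le C_3(s-t)\bigl(1+(\bbeta^*_s)^2+\log(1+|x|+B^*_s)\bigr)$ pathwise. To bound $\tilde{\mathbb P}^z(|\hat X_s|>M)\le\tilde{\mathbb P}^z(B^*_s>M-|x|)$, split off $\{\bbeta^*_s>c_G\sqrt M\}$ (probability $\le Ce^{-c'c_G^2M}$), and on $\{\bbeta^*_s\le c_G\sqrt M\}$ decompose dyadically over $B^*_s\in(2^jM,2^{j+1}M]$: on the $j$‑th piece $B$ must reach level $2^jM$ before the deterministic time $C_3(s-t)(1+c_G^2M+\log(1+|x|+2^{j+1}M))$, so by the reflection principle that piece has probability $\le4\exp\!\bigl(-4^jM^2/(2C_3(s-t)(1+c_G^2M+\log(1+|x|)+(j+2)\log2+\log M))\bigr)$; summing over $j\ge0$ (the $j=0$ term dominating) gives, for $M$ large, $\tilde{\mathbb P}^z(|\hat X_s|>M)\le Ce^{-c_9M}$ with $c_9=\min\bigl(1/(4C_3(s-t)c_G^2),\,c'c_G^2\bigr)$. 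Choosing $c_G^2$ large enough that $c'c_G^2>2\bar c$, and then $\tilde t-t$ small enough that $1/(4C_3(\tilde t-t)c_G^2)>2\bar c$, yields $c_9>2\bar c$ for every $s\in[t,\tilde t]$, and integrating the tail gives the claim; all constants depend only on the structural data and on $(t,x,\beta)$, never on $z$. Finally I would take $\tilde t-t$ to be the minimum of $\epsilon\sigma^2$, the smallness threshold just used, and $(T-t)/2$, and take a common $C$.

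The one genuinely delicate step is this exponential‑moment estimate, and it is worth flagging why the indirect route is needed. A direct It\^o/Gronwall attack on $v_c(s):=\mathbf E^{\tilde{\mathbb P}^z}[e^{c|\hat X_s|}]$ does not close: the logarithmic growth of $\langle\hat X\rangle$ forces $v_c'$ to be controlled by $v_{c'}$ with $c'>c$ rather than by $v_c$, and iterating produces an unbounded exponent cascade $c\to(1+\gamma)c\to(1+\gamma)^2c\to\cdots$ along which the initial‑data contribution $e^{(1+\gamma)^jc\sqrt{1+x^2}}$ grows doubly exponentially in $j$ and overwhelms the decaying time factors $((s-t)K)^j/j!$. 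The time‑change/first‑passage argument sidesteps this entirely, at the price --- genuinely necessary, and already anticipated by the ``$\exists\,\tilde t\in(t,T)$'' clause in Definition~\ref{def:ad} --- of working on a short interval $[t,\tilde t]$; and the shortness of that interval is precisely what also lets the Girsanov density factor $\exp\bigl(\tfrac{(z-r)^2}{2\sigma^2}(s-t)\bigr)$ be absorbed into the prescribed $e^{\epsilon z^2}$.
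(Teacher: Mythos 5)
Your proposal is correct, and it is genuinely different from the paper's argument. The paper never changes measure to a $z$-free dynamics: it leaves everything under $\mathbb{P}$, factors the integrand as $e^{-\delta k e^{r(T-s)}X^{z,\pi}_s}=e^{-\delta k e^{r(T-t)}x}\,G_sH_s$ with $H_s^2$ a non-negative $\mathbb{P}$-supermartingale (so $\mathbf E[H_s^2]\le1$), applies Cauchy--Schwarz, and then controls $\mathbf E[G_s^2]$ by (i) absorbing the cross term $(z-r)\bpi^z$ into $\epsilon z^2+C(\bpi^z)^2$, (ii) bounding $\exp\{C\int_t^s(\bpi^z_u)^2\,\rd u\}$ via the growth condition \eqref{inequ022} and Jensen's inequality to convert $\exp(\int\log(1+|X|))$ into an $L^p$ moment of $X$ with a small exponent $p=O(s-t)$, and (iii) feeding this into a BDG plus generalized Gronwall (Willett) estimate for those $L^p$ moments and an explicit Novikov-type bound for $\exp\{C\int(\bbeta^z)^2\}$. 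The factor $e^{\epsilon z^2}$ is tracked by hand through the Gaussian calculations.

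Your route replaces steps (ii)--(iii) by a single Girsanov change to $\tilde{\mathbb P}^z$ with the \emph{constant} exponent $-(z-r)/\sigma$, which removes the $z$-drift from both $X^{z,\pi}$ and $\bbeta^z$ simultaneously, so that the $\tilde{\mathbb P}^z$-law of $(X^{z,\pi},\bbeta^z)$ started at $(x,\beta)$ is $z$-free. The entire $e^{\epsilon z^2}$ factor then falls out of a single Cauchy--Schwarz against the Radon--Nikodym density, using $(z-r)^2\le 2z^2+2r^2$ and $\tilde t-t\le\epsilon\sigma^2$, rather than being tracked through Gaussian moment computations. What remains is a $z$-free exponential moment of $|\hat X_s|$, which you obtain by Dambis--Dubins--Schwarz plus a first-passage/dyadic-splitting argument, using that the quadratic variation grows at most like $C(s-t)(1+(\bbeta^*)^2+\log(1+|\hat X|))$ pathwise and that $\bbeta^*$ has Gaussian tails under $\tilde{\mathbb P}^z$. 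This is conceptually cleaner in how it isolates the role of $z$, and your diagnosis that a naive Gronwall iteration on $v_c(s)=\mathbf E[e^{c|\hat X_s|}]$ cascades to larger $c$ and does not close is exactly right --- the paper sidesteps this by converting $\exp(\log)$ into a low-order polynomial via Jensen, while you sidestep it by passing to Brownian first-passage estimates. The trade-off is that your version invokes heavier machinery (DDS, Borell--TIS) and hides constants, whereas the paper's version stays at the level of BDG, Gronwall and Jensen but is calculationally heavier. Both exploit the same structural facts (the ``$\exists\,\tilde t$'' in Definition~\ref{def:ad}, the sub-Gaussian growth of $\pi$ in $\beta$ and the $\sqrt{\log}$ growth in $x$), and the only thing I would ask you to tidy up is the dyadic indexing near $j=0$ (you want to cover $(M/2,\infty)$, not just $(M,\infty)$, after replacing $M-|x|$ by $M/2$) and to state explicitly that the constant $C$ in \eqref{inequ01}--\eqref{inequ02} is allowed to depend on $(t,x,\beta)$, which Definition~\ref{def:ad} permits and your Borell--TIS constant and initial-value terms require.
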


\proof See Appendix \ref{app:admissible}. \qed

The set of admissible strategies is denoted by $\Pi$.
For every $z\in\range( Z)$ and $\pi\in\Pi$,  the infinitesimal generator $\mathcal{A}^{z,\pi}$ for the inside system \eqref{equ:xbeta:mu}--\eqref{equ:beta:mu} is given by,
for
$f\in C^{1,2,2}([0,T)\times \mathbf{R}^2)\cap C([0,T]\times \mathbf{R}^2)$,
\begin{equation}
	\mathcal{A}^{z,\pi} f=f_t+\left[rx+\pi(z-r)\right]f_x+
	{\zeta\over \sigma^2}(z-\beta)f_\beta +\frac{1}{2}\sigma^2 \pi^2f_{xx}+\frac{1}{2}{\zeta^2\over\sigma^2}f_{\beta\beta}
	+\pi\zeta f_{x\beta}.\label{theclosedform}	
\end{equation}

\subsection{ Ansatz}

For every $\pi\in\Pi$, its inside expected utility is
\begin{equation*}
g^{z,\pi}(t,x,\beta)= \mathbf{E} \left[\left.U\left(X^{z,\pi}_T\right)\,\right |\,X^{z,\pi}_t=x,\bbeta^z_t=\beta\right].
\end{equation*}
We make the following ansatz on the equilibrium value $g^z(t,x,\beta)$ for the inside expected utility:
\begin{itemize}
\item For every $z$, there exists some function $f^z:[0,T]\times\Rbf\to\Rbf$ such that
\begin{equation}\label{eq:ans:gz}
g^z(t,x,\beta)=-\frac{1}{k}\exp\left\{-ke^{r(T-t)}x+f^z(t,\beta)\right\},\quad (t,x,\beta)\in[0,T]\times\Rbf^2;
\end{equation}
\item For every $t$, $f^z(t,\beta)$ is a quadratic form of $(\beta,z)$, i.e., there exist $C^1$ functions $m_i$, $i=1,\dots,6$, such that
\begin{equation}\label{eq:ans:fz}
f^z(t,\beta)=\frac{1}{2} m_1(t)\beta^2+m_2(t)\beta z+\frac{1}{2} m_3(t)z^2+m_4(t)\beta
+m_5(t)z+m_6(t).
\end{equation}
\end{itemize}

Let
\[
\begin{split}
	&a_1(t,m_2,m_3)=\frac{(1+\zeta(t)m_2)(\zeta(t)^{-1}+\!\alpha m_2)}{\zeta(t)^{-1}-\alpha m_3},\\
	&a_2(t,m_2,m_3)=\frac{\alpha
		(1+\!\zeta(t)m_2)}{\zeta(t)^{-1}-\alpha m_3}.
\end{split}
\]
Consider the following system of ODEs:
\begin{eqnarray}\label{equ:ode}
\begin{cases}
		\sigma^2 m_1^\prime(t)=2\zeta(t)m_1(t)-a_1^2\left(t,m_2(t),m_3(t)\right),
		\\
		\sigma^2 m_2^\prime(t)=(1+\zeta(t)m_2(t))a_1\left(t,m_2(t),m_3(t)\right) +\zeta(t)m_2(t),
		\\
		\sigma^2 m_3^\prime(t)=-2\zeta(t)m_2(t)-\zeta^{2}(t)m_2^2(t),
		\\
		\sigma^2 m_4^\prime(t)=\zeta(t)m_4(t)-a_1\left(t,m_2(t),m_3(t)\right) a_2\left(t,m_2(t),m_3(t)\right)m_5(t) -r \zeta(t)m_1(t),
		\\
		\sigma^2 m_5^\prime(t)=(1+\zeta(t)m_2(t))a_2\left(t,m_2(t),m_3(t)\right)m_5(t)-r(1+\zeta(t)m_2(t)),
		\\
		\sigma^2 m_6^\prime(t)=-\frac{1}{2} a_2^2\left(t,m_2(t),m_3(t)\right)m_5^2(t)+\frac{1}{2}r^2
		 -\frac{1}{2}\zeta^{2}(t) m_1(t)-r \zeta(t)m_4(t),\\
		m_1(T)=m_2(T)=\dots=m_6(T)=0.
\end{cases}
\end{eqnarray}

%{\color{red}
%\begin{lemma}\label{lma:m4m5}
%Assume that $\{m_i, i=1,\dots,6\}$  solves \eqref{equ:ode} and $\zeta(t)^{-1}-\alpha m_3(t)\neq0$ for all $t\in[0,T]$. We have
%\begin{equation}
%	\begin{cases}\label{equ:m4m5}
%		m_4=-rm_1-rm_2,\\
%		m_5=-rm_2-rm_3.
%	\end{cases}
%\end{equation}
%\end{lemma}
%\proof See Appendix \ref{app:lma:m4m5}.\qed
%}

\begin{lemma}\label{lma:ansatz}
Assume that $\{m_i, i=1,\dots,6\}$  solves \eqref{equ:ode} and $\zeta(t)^{-1}-\alpha m_3(t)>0$ for all $t\in[0,T]$. Then
\begin{equation}
	\begin{cases}\label{equ:m4m5}
		m_4=-rm_1-rm_2,\\
		m_5=-rm_2-rm_3.
	\end{cases}
\end{equation}
Let
\begin{equation}\label{equ:u*}
\pi^*(t,x,\beta)=\frac{e^{-r(T-t)}}{k\sigma^2}\left\{a_1(t,m_2(t),m_3(t))+\zeta(t)m_1(t)\right\} (\beta-r).
\end{equation}
Let $g^z$ be given by \eqref{eq:ans:gz}--\eqref{eq:ans:fz}.
Then %$\{g^z,z\in\range(Z)\}$ and $\pi^*$ satisfies the following condition:
for all $ (t,x,\beta)\in[0,T]\times \mathbf{R}^2$,
\begin{align}
&\sup_{\pi\in \Pi}\mathbf{E}\left[\phi^\prime\left(g^ Z(t,x,\beta)\right)\Ac^{ Z,\pi} g^ Z(t,x,\beta)\mid \bbeta_t=\beta\right]=0, \label{equ:hjb:beta}
\\ &\Ac^{z,\pi^*}g^z(t,x,\beta)=0, \label{equ:hjbg:beta}
\\ &g^z(T,x,\beta)=-{1\over k}e^{-kx}.\label{equ:hjbg2:beta}
\end{align}
\end{lemma}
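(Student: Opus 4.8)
The plan is to treat the four claims in the order \eqref{equ:hjbg2:beta} (trivial), \eqref{equ:m4m5}, \eqref{equ:hjbg:beta}, and finally \eqref{equ:hjb:beta}, since \eqref{equ:m4m5} is needed in the last two steps. Equation \eqref{equ:hjbg2:beta} is immediate: all $m_i(T)=0$, so $f^z(T,\beta)=0$ by \eqref{eq:ans:fz}, hence $g^z(T,x,\beta)=-\frac1k e^{-kx}$ by \eqref{eq:ans:gz}. For \eqref{equ:m4m5}, set $n_4\trieq m_4+r(m_1+m_2)$ and $n_5\trieq m_5+r(m_2+m_3)$, so $n_4(T)=n_5(T)=0$. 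From the definitions of $a_1,a_2$ one records the elementary identity
\[
a_1-(1+\zeta m_2)=(m_2+m_3)a_2 ,
\]
and, combining it with the six ODEs of \eqref{equ:ode}, a direct computation of $\sigma^2 n_4'=\sigma^2 m_4'+r\sigma^2(m_1'+m_2')$ and $\sigma^2 n_5'=\sigma^2 m_5'+r\sigma^2(m_2'+m_3')$ collapses to the linear homogeneous system $\sigma^2 n_4'=\zeta n_4-a_1a_2 n_5$, $\sigma^2 n_5'=(1+\zeta m_2)a_2 n_5$. Since $\zeta(t)^{-1}-\alpha m_3(t)>0$ keeps $\zeta,a_1,a_2,m_2$ continuous on $[0,T]$, uniqueness for linear ODEs with zero terminal data forces $n_4\equiv n_5\equiv0$, which is \eqref{equ:m4m5}.

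For \eqref{equ:hjbg:beta}, write $L(t)\trieq k e^{r(T-t)}$ (so $L'=-rL$ and $e^{-r(T-t)}/k=1/L$). From \eqref{eq:ans:gz} one has $g^z_x=-Lg^z$, $g^z_{xx}=L^2g^z$, $g^z_{x\beta}=-Lg^z f^z_\beta$, $g^z_t=g^z(rLx+f^z_t)$, $g^z_\beta=g^z f^z_\beta$, $g^z_{\beta\beta}=g^z((f^z_\beta)^2+f^z_{\beta\beta})$. Substituting into \eqref{theclosedform}, the $x$-terms cancel and $\mathcal{A}^{z,\pi}g^z=g^z Q_z(\pi)$ with
\[
Q_z(\pi)=\tfrac12\sigma^2L^2\pi^2-\pi L\big[(z-r)+\zeta f^z_\beta\big]+f^z_t+\tfrac{\zeta}{\sigma^2}(z-\beta)f^z_\beta+\tfrac{\zeta^2}{2\sigma^2}\big((f^z_\beta)^2+f^z_{\beta\beta}\big).
\]
Now insert $f^z_\beta=m_1\beta+m_2z+m_4$, $f^z_{\beta\beta}=m_1$, the quadratic form \eqref{eq:ans:fz} for $f^z_t$, the relations \eqref{equ:m4m5}, and $\pi^*=\frac1{\sigma^2L}(a_1+\zeta m_1)(\beta-r)$ from \eqref{equ:u*}. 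Then $Q_z(\pi^*)$ is a polynomial of degree at most two in $(\beta,z)$; grouping the coefficients of $\beta^2,\beta z,z^2,\beta,z$ and $1$ and simplifying via the identity above and the definitions of $a_1,a_2$, each coefficient reduces to one of the six ODEs of \eqref{equ:ode} rearranged to zero, hence vanishes. Therefore $Q_z(\pi^*)\equiv0$, i.e. $\mathcal{A}^{z,\pi^*}g^z\equiv0$.

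For \eqref{equ:hjb:beta}: $\pi^*$ from \eqref{equ:u*} is affine in $\beta$ and independent of $x$ with coefficients continuous (hence bounded) on $[0,T]$, so \eqref{equ:x:beta} is a linear SDE with a unique strong solution and the bound \eqref{inequ022} holds; Proposition \ref{prop:admissible} then gives $\pi^*\in\Pi$. Fix $(t,x,\beta)$. The posterior of $Z$ given $\bbeta_t=\beta$ is $N(\beta,\zeta(t))$, while $(-g^z)^\alpha=k^{-\alpha}e^{-\alpha Lx}e^{\alpha f^z(t,\beta)}$ is, as a function of $z$, a Gaussian density up to a constant with quadratic coefficient $\tfrac12\alpha m_3$; the hypothesis $\zeta(t)^{-1}-\alpha m_3(t)>0$ therefore guarantees $c\trieq\mathbf{E}[(-g^Z)^\alpha\mid\bbeta_t=\beta]\in(0,\infty)$ and that the $(-g^Z)^\alpha$-weighted conditional law of $Z$ is again Gaussian, with some mean $\tilde\mu$. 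Using $\mathcal{A}^{z,\pi}g^z=g^zQ_z(\pi)$ and $\phi'(g^Z)g^Z=-(-g^Z)^\alpha$ (as $\phi'(u)=(-u)^{\alpha-1}$ and $g^Z<0$), the map $\pi\mapsto\mathbf{E}[\phi'(g^Z)\mathcal{A}^{Z,\pi}g^Z\mid\bbeta_t=\beta]=-\mathbf{E}[(-g^Z)^\alpha Q_Z(\pi)\mid\bbeta_t=\beta]$ depends on $\pi$ only through the real number $\pi(t,x,\beta)$ and is, in that number, a strictly concave quadratic (leading coefficient $-\tfrac12\sigma^2L^2c<0$). Setting its $\pi$-derivative to zero, inserting the derivatives of $g^z$ above and evaluating the Gaussian conditional expectations gives the unique stationary point $\frac1{\sigma^2L}\big((1+\zeta m_2)\tilde\mu+\zeta m_1\beta+\zeta m_4-r\big)$, which by \eqref{equ:m4m5} and the identity from the first paragraph simplifies to $\frac1{\sigma^2L}(a_1+\zeta m_1)(\beta-r)=\pi^*$. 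Hence $\pi^*$ maximizes the expression over $\pi\in\Pi$, and by \eqref{equ:hjbg:beta} the maximal value is $\mathbf{E}[\phi'(g^Z)\cdot0\mid\bbeta_t=\beta]=0$; this is \eqref{equ:hjb:beta}.

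The conceptual content is slight; the real labour is the polynomial bookkeeping in the third paragraph — and the mirror first-order computation in the fourth — where the identity $a_1-(1+\zeta m_2)=(m_2+m_3)a_2$ and the relations \eqref{equ:m4m5} must be applied at exactly the right moments. The one genuinely substantive ingredient is the hypothesis $\zeta(t)^{-1}-\alpha m_3(t)>0$: it is precisely what makes the $(-g^Z)^\alpha$-tilted posterior an honest Gaussian and the leading coefficient of the relevant quadratic finite and strictly negative, which is what turns the first-order condition into a genuine maximum.
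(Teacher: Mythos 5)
Your proof is correct and follows essentially the same route as the paper's: the same key identity $a_1-(1+\zeta m_2)=(m_2+m_3)a_2$ and linear-ODE-with-zero-terminal-data argument for \eqref{equ:m4m5}, the same substitution of \eqref{eq:ans:gz}--\eqref{eq:ans:fz} and matching of coefficients of $\beta^2,\beta z,z^2,\beta,z,1$ against \eqref{equ:ode} for \eqref{equ:hjbg:beta}, and the same strictly-concave-quadratic-plus-Gaussian-tilted-posterior computation (via $Z\mid\bbeta_t=\beta\sim N(\beta,\zeta(t))$ and the hypothesis $\zeta^{-1}-\alpha m_3>0$) for \eqref{equ:hjb:beta}. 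The only minor differences are cosmetic: you display the full triangular system for $(n_4,n_5)$ rather than establishing $n_5\equiv0$ first, and you explicitly note $\pi^*\in\Pi$, a point the paper defers to the proof of Theorem \ref{thm:result}.
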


\proof See Appendix \ref{app:lma:ansatz}. \qed

%{
%\color{red}$f^z(t,\beta)$ satisfies
%$$f^z(t,\beta)=\frac{1}{2} m_1(t)(\beta-r)^2+m_2(t)(\beta-r) (z-r)+\frac{1}{2} m_3(t)(z-r)^2+\hat{m}_6(t).$$
%}

%And ODE \eqref{ODEb} has a unique explicit solution:
%\[b(t)=ke^{r(T-t)},\]
%which coincides with the coefficient of $x$ in many portfolio selection problems with CARA utility ({{see \cite{browne1995optimal}, \cite{vigna2014efficiency}}}).
%\begin{remark}
%Although  ODEs~\eqref{equ:ode} can not be solved explicitly,
%
%We observe that $r$ and $k$ do not influence ODEs~\eqref{equ:ode}. $\pi^*(t,\beta)$ depends on $r$ and $k$ only through function $b(t)^{-1}$.
%\end{remark}

\subsection{Closed-Form Solution of ODE System \eqref{equ:ode}}

We can first solve the following system of ODEs:
\begin{equation}\label{equ:ode:m2m3}
\begin{cases}
\sigma^2m_2^\prime(t)=(1+\zeta(t)m_2(t))a_1\left(t,m_2(t),m_3(t)\right) +\zeta(t)m_2(t),\\
\sigma^2m_3^\prime(t)=-2 \zeta(t)m_2(t)-\zeta^{2}(t)m_2^2(t),\\
m_2(T)=m_3(T)=0.
\end{cases}
\end{equation}
Then we solve the other equations in \eqref{equ:ode}  one by one.

The following proposition provides the solution of \eqref{equ:ode:m2m3} in closed form.
\begin{proposition}\label{prop:m2m3}
There exists a unique $\alpha^*<0$ such that
\begin{equation}\label{equ:alpha*}
\left(1+\sigma_0^{-2}\sigma^2T^{-1}\right)\int_0^\infty e^{-x}(1+(1-\alpha^*)x)^{\frac{\alpha^*}{1-\alpha^*}}\rd x=1.
%\left(1+\sigma_0^{-2}\sigma^2T^{-1}\right)\int^{1}_0e^{\frac{-s}{1-s}}\left(1-s\right)^{-\frac{\alpha^*-2}{\alpha^*-1}}\left(1-\!\alpha^*
%s\right)^{-\frac{\alpha^*}{\alpha^*-1}}
%\rd s=1.
\end{equation}
For every $\alpha>\alpha^*$, \eqref{equ:ode:m2m3} has a unique solution. More precisely, the closed-form solution is given as follows.
\begin{description}
\item[(a)] For $\alpha=0$,
\begin{equation*}
	\begin{cases}
		m_2(t)={\zeta(t)^{-1}\over 1+\log{\zeta(t)\over\zeta(T)}}-\zeta(t)^{-1},\\
		m_3(t)=\zeta(T)^{-1}\int^{\log{\zeta(t)\over\zeta(T)}}_0 e^{-x}(1+x)^{-2}\rd x-{T-t\over\sigma^2}.
	\end{cases}
\end{equation*}

\item[(b)] For $\alpha=1$,
\begin{equation*}
	\begin{cases}
		m_2(t)=\frac{\zeta(t)^{-1}}{1+\sigma^{-2}\zeta(T)(T-t)}-\zeta(t)^{-1},\\
		m_3(t)=\zeta(t)^{-1}-\frac{\zeta(T)^{-1}}{1+\sigma^{-2}\zeta(T)(T-t)}.
	\end{cases}
\end{equation*}

\item[(c)] For $\alpha\in(\alpha^*,\infty)\setminus\{0,1\}$,
\begin{equation*}
	\begin{cases}
		m_2(t)=-\zeta(t)^{-1}\Psi^{-1}\left((\sigma^{-2}\zeta(T)(T-t)\right),\\
		m_3(t)={1\over \alpha}\left
		(\zeta(t)^{-1}-\zeta(T)^{-1}e^{\frac{\zeta(t){m}_2(t)}{1+\zeta(t){m}_2(t)}}\left[1+\zeta(t){m}_2(t)\right]^{\frac{\alpha}{\alpha-1}}\left[1+\!\alpha
		\zeta(t){m}_2(t)\right]^{-\frac{1}{\alpha-1}}\right),
%		
%		
%		\frac{1}{\alpha}\zeta(t)\times
%		\\&\left(1-e^{\frac{-\Psi^{-1}\left(\sigma^{-2}\zeta(T)(T-t)\right)}{1-\Psi^{-1}\left(\sigma^{-2}\zeta(T)(T-t)\right)}}\left[1-\Psi^{-1}\left(\sigma^{-2}\zeta(T)(T-t)\right)\right]^{\frac{\alpha}{\alpha-1}}\left[1-\!\alpha
%		\Psi^{-1}\left(\sigma^{-2}\zeta(T)(T-t)\right)\right]^{-\frac{1}{\alpha-1}}\right),
	\end{cases}
\end{equation*}
where
\[\Psi(x)=\int^x_0\frac{1}{\psi(s)}\rd s, \quad x\in\left[0,{1\over\alpha\vee 1}\right]\]
and $$\psi(x)=e^{\frac{x}{1-x}}\left(1-x\right)^{\frac{\alpha-2}{\alpha-1}}\left(1-\alpha
x\right)^{\frac{\alpha}{\alpha-1}}, \quad x\in\left[0,{1\over\alpha\vee 1}\right).$$
\end{description}
Moreover, for every $t\in[0,T)$,
\begin{align*}
		&0>\zeta(t){m}_2(t)>-\frac{1}{\alpha\vee 1},\\
		&{m}_3(t)<0,\\
		&\zeta(t)^{-1}-\alpha m_3(t)>0.
	\end{align*}
\end{proposition}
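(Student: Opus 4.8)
The plan is to decouple the ODE system as the statement itself suggests: first solve the subsystem \eqref{equ:ode:m2m3} for $(m_2,m_3)$, and then observe that the remaining equations are linear once $(m_2,m_3)$ are in hand. For the $(m_2,m_3)$ part, the natural move is to change the time variable. Because the coefficients depend on $t$ only through $\zeta(t)$, which is monotone, I would introduce a new variable such as $u=\log\frac{\zeta(t)}{\zeta(T)}$ (or $u=\sigma^{-2}\zeta(T)(T-t)$, matching the closed forms in the statement) so that the system becomes autonomous-looking. Then I would rescale $m_2$ by writing $v(t)=-\zeta(t)m_2(t)$, which the stated bounds $0>\zeta(t)m_2(t)>-\frac{1}{\alpha\vee1}$ suggest is the correct normalization; under this substitution the first equation should reduce to a scalar autonomous ODE $v'=\psi(v)$-type equation (hence the appearance of $\Psi=\int 1/\psi$ in the closed form), with the $m_3$-dependence entering only through $a_1$, which must simplify. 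The key algebraic identity to verify is that along solutions, $\zeta(t)^{-1}-\alpha m_3(t)$ can be expressed explicitly in terms of $v$, decoupling the first equation entirely; this is exactly what the formula for $m_3(t)$ in case (c) encodes, so I would differentiate that candidate expression and check it satisfies the third ODE given the first.

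Next I would handle existence, uniqueness, and the maximal interval of validity. The scalar ODE $v'=\psi(v)$ with $v(T)=0$ and $\psi>0$ on $[0,\frac{1}{\alpha\vee1})$ has a unique local solution that is increasing as $t$ decreases from $T$; the question is whether $v$ stays below $\frac{1}{\alpha\vee1}$ on all of $[0,T]$. Since $\Psi(x)=\int_0^x 1/\psi(s)\,ds$ is the inverse time map, the solution exists on $[0,T]$ iff $\Psi\left(\frac{1}{\alpha\vee1}\right) > \sigma^{-2}\zeta(T)T$, i.e., iff the elapsed "$\Psi$-time" over $[0,T]$ does not exhaust the available range. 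I expect this is where $\alpha^*$ enters: the threshold condition $\Psi\left(\frac{1}{\alpha\vee1}\right)=\sigma^{-2}\zeta(T)T$ should, after substituting $s\mapsto$ something and using $\zeta(T)^{-1}=\sigma^{-2}+T\sigma_0^{-2}$, rewrite as \eqref{equ:alpha*}. So I would (i) show $\Psi\left(\frac{1}{\alpha\vee1}\right)$ is finite and strictly decreasing in $\alpha$ (monotonicity of the integral defining it), (ii) conclude there is a unique $\alpha^*$ making it equal the critical value, with $\alpha^*<0$ by evaluating at $\alpha=0$ or checking signs, and (iii) deduce that for $\alpha>\alpha^*$ the solution exists on all of $[0,T]$ and satisfies the stated bounds, while the boundary values $m_2(T)=m_3(T)=0$ are immediate from $v(T)=0$ and the $m_3$ formula at $t=T$.

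Once $(m_2,m_3)$ are established on $[0,T]$ with $\zeta(t)^{-1}-\alpha m_3(t)>0$ (which also makes $a_1,a_2$ well-defined and smooth), the remaining equations for $m_1,m_4,m_5,m_6$ are linear ODEs with continuous (indeed $C^1$) coefficients and terminal condition zero, so Picard--Lindelöf gives unique global solutions with no blow-up; the relations \eqref{equ:m4m5} are separately verified in Lemma \ref{lma:ansatz} and need not be reproven here. The final sign claims $m_3(t)<0$ and $0>\zeta(t)m_2(t)$ follow from $v>0$; the upper bound $v<\frac{1}{\alpha\vee1}$ is the non-blow-up statement just proved; and $\zeta(t)^{-1}-\alpha m_3(t)>0$ follows either directly from the explicit formula for $m_3$ (the right-hand factor is manifestly positive) or, for the cases $\alpha=0,1$, by inspection of the corresponding closed forms.

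The main obstacle I anticipate is the bookkeeping in step one: verifying that the candidate closed-form $m_3(t)$ genuinely solves the third ODE, and that the substitution $v=-\zeta m_2$ really does decouple the first equation — this requires the nontrivial cancellation $a_1(t,m_2,m_3)$ simplifying to a function of $v$ alone along solutions, which in turn depends on the precise relation between $m_3$ and $v$. The rest (the threshold analysis giving $\alpha^*$, and the linear equations) is comparatively routine, modulo carefully tracking the change of variables so that the integral in \eqref{equ:alpha*} comes out with exactly the exponents $\frac{\alpha^*}{1-\alpha^*}$ shown.
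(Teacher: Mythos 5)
Your outline matches the paper's proof in all essential respects: the substitution $\hat m_2(t)=\zeta(t)m_2(t)$ (your $v=-\hat m_2$), the observation that $\hat m_3(t)=\zeta(t)^{-1}-\alpha m_3(t)$ can be expressed algebraically in terms of $\hat m_2$ by integrating a separated equation (the paper's display \eqref{m2m4n} integrating to \eqref{m2m4}), the resulting autonomous scalar ODE whose flow time is governed by $\Psi=\int 1/\psi$, and the identification of $\alpha^*$ via a non-explosion threshold that rewrites, after a change of variables, into \eqref{equ:alpha*}. The signs, the boundedness of $\hat m_2$, and the positivity of $\hat m_3$ are then read off the explicit formulas exactly as you propose.

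There are, however, two factual slips in your threshold analysis that would derail step (iii) as written.

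First, $\Psi\!\left(\tfrac{1}{\alpha\vee1}\right)$ is \emph{not} finite for $\alpha\ge1$: the paper shows $\Psi(1/\alpha)=+\infty$ for $\alpha>1$ because the exponent $\tfrac{\alpha}{\alpha-1}>1$ makes $1/\psi$ non-integrable at $s=1/\alpha$. This is in fact the reason the solution automatically exists on all of $[0,T]$ when $\alpha\ge1$, and the paper treats $\alpha>1$, $\alpha=1$, $\alpha\in(0,1)$ and $\alpha<0$ as separate cases precisely because the boundary $1/(\alpha\vee1)$ changes character.

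Second, the monotonicity direction is reversed: after the change of variables $s\mapsto x$ the quantity to compare with $T$ is
\[
T_\alpha=\bigl(T+\sigma_0^{-2}\sigma^2\bigr)\int_0^\infty e^{-x}\bigl(1+(1-\alpha)x\bigr)^{\frac{\alpha}{1-\alpha}}\,\rd x,
\]
and the integrand is strictly \emph{increasing} in $\alpha$ on $(-\infty,0]$ for each fixed $x>0$ (the paper verifies this and uses monotone convergence), so $T_\alpha$ is strictly increasing, with $T_\alpha\to0$ as $\alpha\to-\infty$ and $T_0=T+\sigma_0^{-2}\sigma^2>T$. That gives a unique $\alpha^*<0$ with $T_{\alpha^*}=T$ and, crucially, $T_\alpha>T$ for $\alpha>\alpha^*$, which is what yields existence on $[0,T]$. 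If the map were decreasing, as you state, your step (iii) would conclude existence for $\alpha<\alpha^*$ rather than $\alpha>\alpha^*$; the logic as written is internally inconsistent and needs the sign corrected.

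One minor point: the remarks about $m_1,m_4,m_5,m_6$ being linear ODEs are correct but not part of this proposition, which concerns only the $(m_2,m_3)$ subsystem \eqref{equ:ode:m2m3}; the rest is handled in the main text and Lemma \ref{lma:ansatz}.
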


\proof See Appendix \ref{app:prop:m2m3}. \qed

Now we solve the other equations in \eqref{equ:ode}  one by one. To this end, for $i\in\{1,2\}$, let
$$A_i(t)=a_i(t,m_2(t),m_3(t)),\quad t\in[0,T].$$
By Proposition \ref{prop:m2m3}, we know that
$$\inf_{t\in[0,T]}\{\zeta(t)^{-1}-\alpha m_3(t)\}>0$$
and hence both of $A_1$ and $A_2$ are bounded on $[0,T]$.
Then the closed-form solutions for $m_1$, $m_5$, $m_4$ and $m_6$  are given as follows.
\begin{eqnarray}
	\begin{cases}
		m_1(t)=\sigma^{-2}{\zeta(t)}^{-2}\int_t^T\left[\zeta^{2}(s) A_1^2(s)\right]\rd s,
		\\
		%m_5(t)=r\sigma^{-2}\int_t^T [1+\zeta(s)m_2(s)]\exp\left(-\int_t^s \sigma^{-2} {(1+\zeta(u)m_2(u))} A_2(u)\rd u \right)\rd s,
		m_4(t)=-rm_1(t)-rm_2(t),
		\\
		%m_4(t)=\sigma^{-2}\zeta(t)^{-1}\int_t^T\left[\zeta(s) A_1(s) A_2(s)m_5(s)+r \zeta^{2}(s)m_1(s)\right]\rd s,
		m_5(t)=-rm_2(t)-rm_3(t),
		\\
		m_6(t)=\sigma^{-2}\int_t^T\left[\frac{1}{2} A_2^2(s)m_5^2(s)-\frac{1}{2}r^2
		 +\frac{1}{2}\zeta^{2}(s) m_1(s)+r \zeta(s)m_4(s)\right]\rd s.
	\end{cases}
\end{eqnarray}
%We can explicitly obtain $m_1(t)$, $m_5(t)$, $m_4(t)$ and $m_6(t)$  successively.

In particular, for $\alpha=0$,
\begin{eqnarray*}%\color{red} \text{double check. it seems inconsistent with $f_\alpha$.}\\
	%\begin{cases}
		{m}_1(t)={\zeta(T)\over \zeta^2(t)} \int^{-1}_{-1-\log {\zeta(t) \over \zeta(T)}}e^{-x-1}x^{-2}\rd x;
	%	m_4(t)=-{r\zeta(T)\over \zeta^2(t)} \int^{-1}_{-1-\log {\zeta(t) \over \zeta(T)}}e^{-x-1}x^{-2}\rd x-{r\zeta(t)^{-1}\over 1+\log{\zeta(t)\over\zeta(T)}}+r\zeta(t)^{-1},\\
	%	m_5(t)=-{r\zeta(t)^{-1}\over 1+\log{\zeta(t)\over\zeta(T)}}+r\zeta(t)^{-1}-r\zeta(T)^{-1}\int^{\log{\zeta(t)\over\zeta(T)}}_0 e^{-x}(1+x)^{-2}\rd x+r{T-t\over\sigma^2}.\\
	%\end{cases}
\end{eqnarray*}
for $\alpha=1$,
\begin{eqnarray*}%\color{red} \text{double check}\\
	%\begin{cases}
		{m}_1(t)=\frac{\sigma^{-2}{\zeta(t)}^{-2}\zeta^2(T)(T-t)}{1+\sigma^{-2}\zeta(T)(T-t)}.
		%{m}_4(t)=r\sigma^{-2}\zeta(t)^{-1}\zeta(T)(T-t)-\frac{r\sigma^{-2}\zeta(t)^{-1}\zeta(T)(T-t)}{1+\sigma^{-2}\zeta(T)(T-t)},\\
		%{m}_5(t)=\frac{r\sigma^{-2}(T-t)}{1+\sigma^{-2}\zeta(T)(T-t)}.
	%\end{cases}
\end{eqnarray*}

\subsection{Equilibrium Solution}

The following theorem provides an equilibrium solution in closed form.
\begin{theorem}\label{thm:result}
Assume that $\alpha>\max\{\alpha^*, 1-0.5\sigma_0^{-2}\sigma^2T^{-1}\}$, where $\alpha^*$ is given by Proposition \ref{prop:m2m3}. Let $m_i$, $i=1,\dots,6$, be given by Proposition \ref{prop:m2m3}. Let $\pi^*$ be given by \eqref{equ:u*}.
%\[
%\begin{split}\pi^*(t,x,\beta)=
%\color{red} \text{ recover the formula that the appendix directly proves}
%	\end{split}
%\]
Then $\pi^*$ is an equilibrium solution and the equilibrium value function for the inside expected utility is given by  \eqref{eq:ans:gz}--\eqref{eq:ans:fz}.
%\[\begin{split}
%g^z(t,x,\beta)=&\mathbf{E}\left[\left.U(X^{z,\pi^*}_T)\,\right|\,Z=z, X^{z,\pi^*}_t=x,\bbeta^z_t=\beta\right]\\=&
%-\frac{1}{k}\!\exp\left\{\!-ke^{r(T-t)}x\!+\!\frac{1}{2} m_1(t)\beta^2\!+\! m_2(t)\beta z\right.
%\\&\left.\!+\!\frac{1}{2}m_3(t) z^2+\!m_4(t)\beta
%\!+  m_5(t)z\!+\!m_6(t)\right\}.
%\end{split}
%\]
\end{theorem}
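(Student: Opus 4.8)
The plan is to verify the three conditions of the $\bbeta$-version verification theorem (Theorem~\ref{thm}) for the candidate family $\{g^z\}$ given by \eqref{eq:ans:gz}--\eqref{eq:ans:fz} with $m_i$ from Proposition~\ref{prop:m2m3} and the accompanying explicit formulas for $m_1,\dots,m_6$, together with the candidate strategy $\pi^*$ from \eqref{equ:u*}. Condition~(a) is essentially already in hand: Lemma~\ref{lma:ansatz} shows that whenever $\{m_i\}$ solves the ODE system \eqref{equ:ode} and the positivity condition $\zeta(t)^{-1}-\alpha m_3(t)>0$ holds on $[0,T]$, the HJB-type identities \eqref{equ:hjb:beta}--\eqref{equ:hjbg2:beta} hold. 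So the first step is to check the hypotheses of Lemma~\ref{lma:ansatz}. Proposition~\ref{prop:m2m3} already furnishes a solution of the reduced system \eqref{equ:ode:m2m3} for $\alpha>\alpha^*$, together with $m_3(t)<0$ and $\zeta(t)^{-1}-\alpha m_3(t)>0$ on $[0,T)$; then $m_1,m_4,m_5,m_6$ are defined by explicit quadratures (with $A_1,A_2$ bounded, as noted after Proposition~\ref{prop:m2m3}), so $\{m_i\}$ is a genuine $C^1$ solution of the full system \eqref{equ:ode}. One has to confirm the positivity condition also holds at $t=T$ (where $m_3(T)=0$, so it reads $\zeta(T)^{-1}>0$, trivially true) and is continuous, hence holds on all of $[0,T]$. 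This invokes Lemma~\ref{lma:ansatz} and gives condition~(a).

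Next I would establish admissibility of $\pi^*$ and condition~(b). From \eqref{equ:u*}, $\pi^*(t,x,\beta)$ is affine in $\beta$ with bounded (continuous on a compact interval) coefficients and has no $x$-dependence, so it trivially satisfies the growth bound \eqref{inequ022}; by Proposition~\ref{prop:admissible} it suffices to check that SDE \eqref{equ:x:beta} with this $\pi^*$ has a unique strong solution, which follows since the resulting coefficients in $(X,\bbeta)$ are locally Lipschitz with linear growth in $X$ (the drift is $rX+\pi^*(t,\bbeta)(\bbeta-r)$, linear in $X$), so $\pi^*\in\Pi$. For condition~(b), along the inside dynamics \eqref{equ:xbeta:mu}--\eqref{equ:beta:mu} with $\pi=\pi^*$, It\^o's formula applied to $g^z(t,X^{z,\pi^*}_t,\bbeta^z_t)$ gives drift $\mathcal{A}^{z,\pi^*}g^z=0$ by \eqref{equ:hjbg:beta}, so the process is a local martingale; the genuine martingale property then follows from integrability estimates of the exponential form $e^{-ke^{r(T-t)}X^{z,\pi^*}_t+f^z(t,\bbeta^z_t)}$, using that $X^{z,\pi^*}$ is conditionally Gaussian (linear SDE with the affine-in-$\bbeta$ control) and $\bbeta^z$ is Gaussian, so all relevant exponential moments are finite; this is the content of the admissibility bounds \eqref{inequ01}--\eqref{inequ02} applied with $\pi^*$, and is exactly the estimate needed for uniform integrability over $[0,T]$.

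The last step is condition~(c): verifying that $\{g^z\}$ satisfies Assumptions~\ref{assu0}--\ref{assu3}. Here the strategy is again to exploit the explicit Gaussian/exponential structure. For Assumption~\ref{assu0}, the integrands $g^z_x\pi$ and $g^z_\beta\varphi_y(\cdot)$ are, up to the exponential factor $e^{-ke^{r(T-s)}X^{z,\pi}_s+f^z(s,\bbeta^z_s)}$, polynomial in $(\bbeta^z_s,\pi(s,\cdot))$ of controlled growth; the stochastic integrals are then $L^2$-martingales (hence supermartingales) on a short enough interval $[t,\tilde t]$ by the moment bounds \eqref{inequ01}--\eqref{inequ02} in Definition~\ref{def:admissible} combined with H\"older's inequality and the fact that $e^{\epsilon z^2}$ is $F_0$-integrable by \eqref{assup1} for $\epsilon$ small. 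Assumptions~\ref{assu1}--\ref{assu3} are uniform-integrability statements on the averaged generator $\frac1h\int_t^{t+h}\mathcal{A}^{Z,\pi}g^Z\,\rd u$; since $\mathcal{A}^{z,\pi}g^z$ is again $g^z$ times a polynomial in $(\beta,z,\pi,\pi^2)$, the same exponential-moment estimates, now combined with $\phi'(g^Z)=(-g^Z)^{\alpha-1}$ which contributes another controllable exponential factor, yield uniform integrability; Assumption~\ref{assu3} (continuity at $h\to0^+$, needed only for convex $\phi$, i.e.\ $\alpha<1$) follows from dominated convergence using the same domination. I expect this last step to be the main obstacle: it requires choosing the horizon $\tilde t-t$ and the exponents $\delta,\rho,\epsilon$ carefully so that all H\"older products of exponential moments of $X^{z,\pi}$, polynomial moments of $\bbeta^z$ and $\pi$, and the $F_0$-integrability of $e^{\eta z^2}$ close simultaneously, uniformly in $z\in\range(Z)$ --- this is where the hypothesis $\alpha>1-0.5\sigma_0^{-2}\sigma^2T^{-1}$ enters, guaranteeing (via the behavior of $m_3$ and $\zeta^{-1}-\alpha m_3$ near $t=0$, established in Proposition~\ref{prop:m2m3}) that the quadratic form $f^z(t,\beta)$ does not blow up and the Gaussian exponential integrals remain finite on all of $[0,T]$. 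Once conditions (a), (b), (c) are checked, Theorem~\ref{thm} directly yields that $\pi^*$ is an equilibrium strategy and that $g^z$ given by \eqref{eq:ans:gz}--\eqref{eq:ans:fz} is the equilibrium value function for the inside expected utility, completing the proof.
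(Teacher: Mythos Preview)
Your overall plan matches the paper's proof: verify admissibility of $\pi^*$ via Proposition~\ref{prop:admissible}, condition~(a) via Lemma~\ref{lma:ansatz} and Proposition~\ref{prop:m2m3}, the martingale property~(b) by It\^o plus an exponential-martingale argument, and then Assumptions~\ref{assu0}--\ref{assu3} by moment estimates built on \eqref{inequ01}--\eqref{inequ02} and the Gaussian posterior. Most of your sketch is sound.

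There is, however, a genuine gap in your treatment of Assumption~\ref{assu2} in the convex case $\alpha<1$, and this is precisely where the extra hypothesis $\alpha>1-0.5\sigma_0^{-2}\sigma^2T^{-1}$ enters --- not where you put it. You write that the relevant factor is $\phi'(g^Z)=(-g^Z)^{\alpha-1}$, but for convex $\phi$ the assumption involves $\phi'\bigl(g^{Z,\pi_{t,h}}(t,x,\beta)\bigr)$ with the \emph{perturbed} strategy $\pi_{t,h}$, for which there is no closed form. Your estimates for $g^z$ do not apply to $g^{z,\pi_{t,h}}$. The paper's device is to dominate $g^{z,\pi_{t,h}}$ from above by the full-information Merton value $\bar g^z(t,x)=-\tfrac{1}{k}\exp\bigl[-ke^{r(T-t)}x+\tfrac12\sigma^{-2}(t-T)(z-r)^2\bigr]$ (valid because $\pi_{t,h}$ is an $\{\mathcal F^W_t\}$-adapted competitor in that problem). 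Since $\phi'(u)=(-u)^{\alpha-1}$ is increasing on $(-\infty,0)$ when $\alpha<1$, this yields $\phi'(g^{z,\pi_{t,h}})\le C\exp\bigl[(1-\alpha)\sigma^{-2}(T-t)(z-r)^2\bigr]$. Integrating this (squared) against the posterior $Z\mid\bbeta_t=\beta\sim N(\beta,\zeta(t))$ requires $2(1-\alpha)\sigma^{-2}(T-t)\zeta(t)<1$ for all $t\in[0,T]$, which is exactly $\alpha>1-0.5\sigma_0^{-2}\sigma^2T^{-1}$.

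So the condition has nothing to do with $m_3$, $\zeta^{-1}-\alpha m_3$, or $f^z$ ``blowing up'' --- those are controlled on all of $[0,T]$ by $\alpha>\alpha^*$ alone (Proposition~\ref{prop:m2m3}). Also note that in Assumption~\ref{assu2} the outer expectation is under the posterior $N(\beta,\zeta(t))$, not the prior $F_0$, so appealing to \eqref{assup1} is not the right move there.
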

\begin{proof}
	See Appendix \ref{app:thm:result}.
\end{proof}

By \eqref{ofc2}, we have the following decomposition.
\[
\begin{split}\pi^*(t,x,\beta)=
\left[1+\underbrace{\frac{\alpha m_2(t)+\alpha m_3(t)}{\zeta(t)^{-1}-\alpha m_3(t)}}_{h^Z}
		+\underbrace{\zeta(t)\left[\frac{ \zeta(t)^{-1}m_2(t)+\alpha m_2^2(t)}{\zeta(t)^{-1}-\alpha m_3(t)}+m_1(t)\right]}_{h^{\bbeta}}\right]\frac{e^{-r(T-t)}}{k\sigma^2}(\beta-r),	\\
%	=&\color{red}\frac{e^{-r(T-t)}}{k\sigma^2}\left\{\frac{(1\!+\!\zeta(t)m_2(t))(\zeta(t)^{-1}\!+\!\alpha
%			m_2(t))}{\zeta(t)^{-1}\!\!-\!\!\alpha m_3(t)}+\zeta(t)m_1(t)\right\} (\beta-r)
	 (t,x,\beta)\in[0,T)\times\Rbf^2,
	\end{split}
\]
{where $\frac{e^{-r(T-t)}}{k\sigma^2}(\beta-r)$ is the myopic demand $\pi^M$, $h^Z={\pi^Z\over\pi^M}$ and $h^{\bbeta}={\pi^{\bbeta}\over\pi^M}$ are the ratios of the hedging demands to the myopic demand.}

\section{Numerical Examples}

We estimate the parameters of the risky asset using  data of the S\&P 500 index daily
closing prices from Center for Research in Security Prices (CRSP)  from January 2017 to December 2021 by maximum likelihood estimation.   During the period, $\beta_0=17.2\%$, $\sigma=19.2\%$ and $\sigma_0=12.1\%$. The investment period is taken as 2 years. %Over the same period, the average annual Treasury bill return was $r=1.42\%$.  The risk aversion parameter is $k=1.5$.

%Figures \ref{fig:alpha}--\ref{fig:sigma02} display  hedging ratios at time 0, where $h=h^Z+h^{\bbeta}$.

Figure \ref{fig:alpha}  plots $h^Z$, $h^{\bbeta}$ and $h\trieq h^Z+h^{\bbeta}$ v.s. $\alpha$. We can see that the hedging ratio $h^Z$ is decreasing w.r.t. $\alpha$: the more ambiguity averse the investor becomes, the smaller $h^Z$ and $h$ are.  Such monotonicity is quite reasonable. In particular, when $\alpha$ goes to $\infty$, $h$ goes to $-1$, which means that the optimal holding of the ambiguous asset for an extremely ambiguity averse investor is zero.
%Furthermore, when $\alpha$ goes to $\alpha^*$, $h$ goes to infinity, which implies that the optimal holding of the ambiguous asset for an extremely ambiguity seeking investor is infinite.

Figures \ref{fig:sigma0}--\ref{fig:sigma02} plot the hedging ratios v.s. $\sigma_0$ for some fixed $\alpha$.
We can see that $h^Z$ is decreasing w.r.t. $\sigma_0$ in the case $\alpha>0$ and increasing w.r.t. $\sigma_0$ in the cases $\alpha<0$.  Such monotonicity is easy to understand for $\alpha>1$ and $\alpha<0$, since, in the case $\alpha>1$ ($\alpha<0$),  $\phi$ is concave (convex) and displays ambiguity aversion (seeking).  However, it seems a puzzle for $\alpha\in(0,1)$: in this case, $\phi$ is convex and displays ambiguity seeking so that $h^Z$ is expected to be \textit{increasing} w.r.t. $\sigma_0$ as in the case $\alpha<0$. But Figure \ref{fig:sigma01} shows that it is \textit{decreasing}!

To explain such a puzzle, we take a closer look at the representation of smooth ambiguity preference (at time $t=0$):
$$\Ebf[\phi(\Ebf[U(X_T)|Z])].$$
We can see that $\phi(\Ebf[U(X_T)|Z])$ is function of the conditional expected utility $\Ebf[U(X_T)|Z]$.
The shape of $\phi$ represents the investor's attitude toward the risk measured in \textit{utility} scale.
What the convexity of $\phi$ represents is that the investor is seeking for the risk of the conditional expected utility $\Ebf[U(X_T)|Z]$.
Let $$V=\phi\circ U.$$ Then the smooth ambiguity preference is represented by
$$\Ebf[V(C_{X_T})]\text{ with } C_{X_T}\trieq U^{-1}(\Ebf[U(X_T)|Z]),$$
where $C_{X_T}$ is the conditional certainty equivalent of $X_T$. $C_{X_T}$ is a monetary payoff.
The shape of $V$ represents the investor's attitude toward the risk measured in \textit{monetary} scale.
In our example,
\begin{eqnarray}
	V(x)=\phi\circ U(x)=
	\begin{cases}
    -{1\over \alpha k^\alpha}e^{-k\alpha x}  & \text{if }\alpha\ne0, \\
     kx+\log k & \text{if }\alpha=0.
\end{cases}
\end{eqnarray}
%Define $\phi\equiv V\circ U^{-1}$, the KMM model can also be formulated as the certainty equivalent at time $t$ to receiving $X^{Z,\pi}_T$ as follows
%\begin{equation}\label{equ:ced}
%	V^{-1}\left(\mathbf{E}\left[ V \left(U^{-1}\left(\mathbf{E}\left[ U(X^{Z,\pi}_T)| Z, \mathcal{F}_t^S\right]\right)\right )|\mathcal{F}_t^S\right] \right).
%\end{equation}
%Here
%\begin{eqnarray}
%	V(x)=\phi\circ U(x)=-{1\over \alpha k^\alpha}e^{-k\alpha x}.
%\end{eqnarray}
%In \eqref{eq:sm:t:J}, $\phi$ represents the attitude of the investor  measured on a utility scale. In \eqref{equ:ced}, the certainty equivalent $U^{-1}\left(\mathbf{E}\left[ U(X^{Z,\pi}_T)| Z, \mathcal{F}_t^S\right]\right)$ for the outcome of the utility function is monetary. In fact, $V$ captures the attitude  of the investor in monetary scale. The shape of $V$ characterizes the attitude towards the variation in the certainty equivalent.
When $\alpha>0$ ($\alpha<0$), $V$ is concave (convex) and the investor dislikes (likes) the risk of the conditional certainty equivalent.
Moreover, $\sigma_0$ is the risk of monetary return $Z$. Therefore, it is reasonable that $h^Z$ is decreasing (increasing) w.r.t. $\sigma_0$ for $\alpha>0$ ($\alpha<0$).%Note that, $h^Z=0$ for $\alpha=0$, which is natural since $V$ is risk neutral
\footnote{
The exponential-power specification for $(U,\phi)$ has also been used in \cite[p. 1333]{Gollier11}. The closed form of the conditional certainty equivalent in \cite{Gollier11} implies that its risk is represented by $\sigma_0$ (up to a multiplier).
%Hence, the hedging coefficient 1 decreases w.r.t. $\sigma_0$ for $\alpha\in(0,1)$.
}

Figures \ref{fig:alpha}--\ref{fig:sigma02} show that $h^{\bbeta}$ is positive in the case $\alpha>1$ and negative in the case $\alpha<1$.  The hedging ratios are zero when there is no uncertainty over $Z$ ($\sigma_0=0$). $h^{\bbeta}$ depends on the ratio of the conditional expectations in \eqref{ofc2}, which has a complicated relation with $Z$.  Thus, it is not monotone w.r.t. $\alpha$ or $\sigma_0$.

Figures \ref{fig:sigma0}--\ref{fig:sigma02} also show that $h$ goes to -1 (resp. infinity) when $\sigma_0$ goes to infinity in the case $\alpha>0$ (resp. $\alpha<0$). Furthermore, the results in  \citet[Lemma 1]{Gollier11} can also be observed in Figure \ref{fig:alpha}. As $h$ is always larger than $-1$, the sign of the optimal investment is the same as the sign of the excess equity premium $\beta-r$. Thus, the demand for the ambiguous asset is positive (zero/negative) if the equity premium is positive (zero/negative).

\begin{figure}[htbp]
	\begin{minipage}{0.5\textwidth}
		\centering
		\includegraphics[totalheight=6cm]{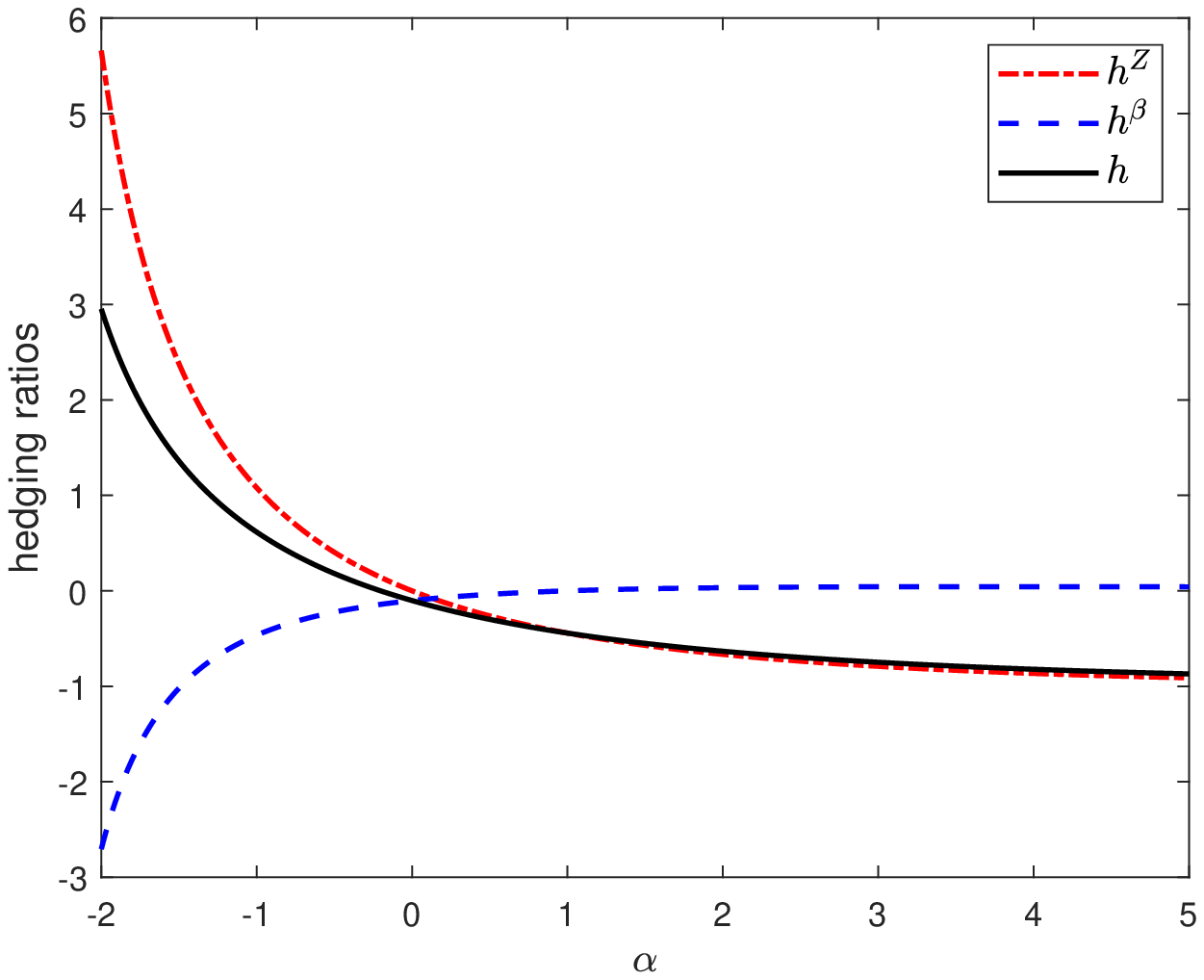}
		\caption{Effects of $\alpha$.}
		\label{fig:alpha}
	\end{minipage}\hfill
	\begin{minipage}{0.5\textwidth}
		\centering
		\includegraphics[totalheight=6cm]{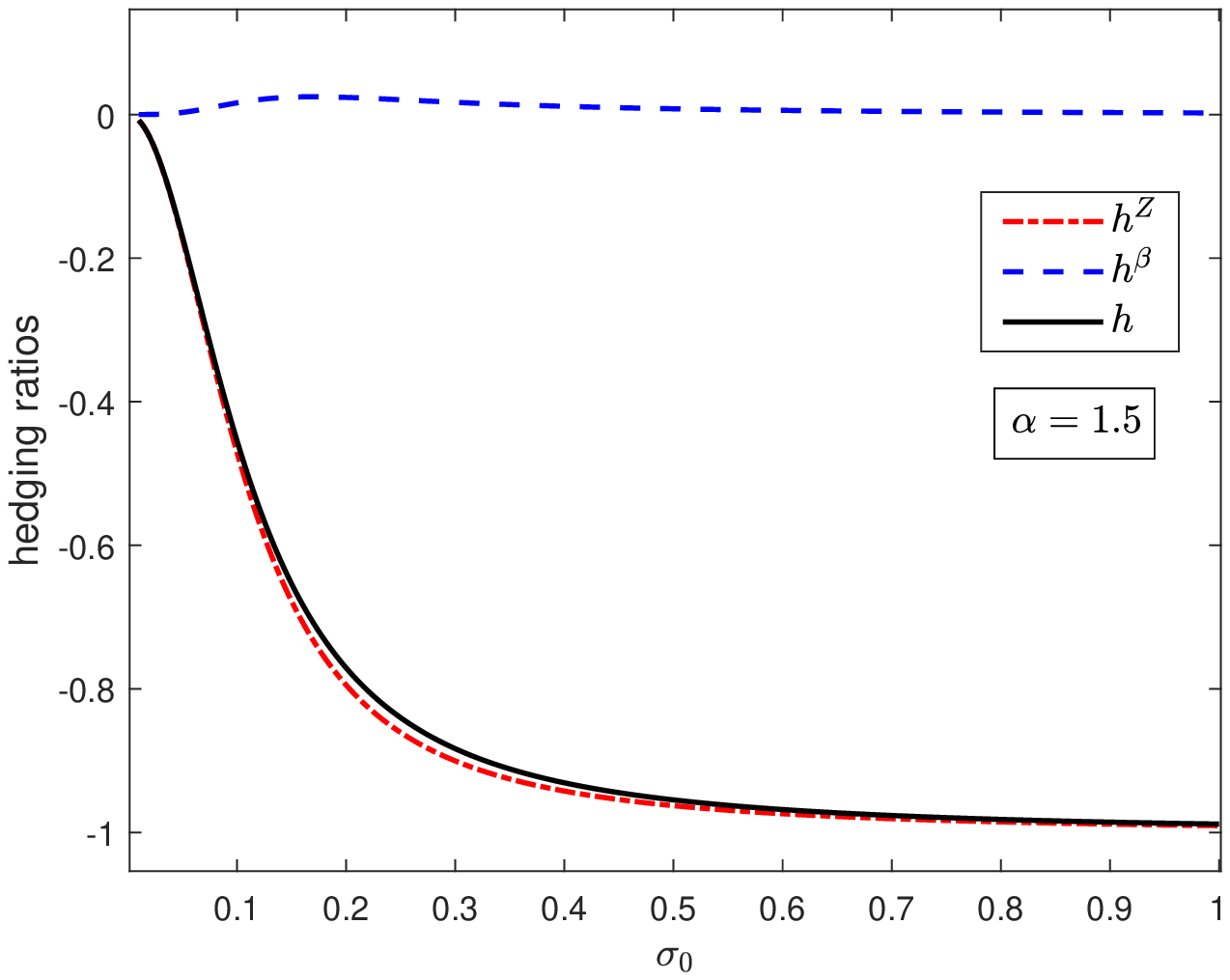}
		\caption{Effects of $\sigma_0$ ($\alpha=1.5$).}
		\label{fig:sigma0}
	\end{minipage}\hfill
	\begin{minipage}{0.5\textwidth}
		\centering
		\includegraphics[totalheight=6cm]{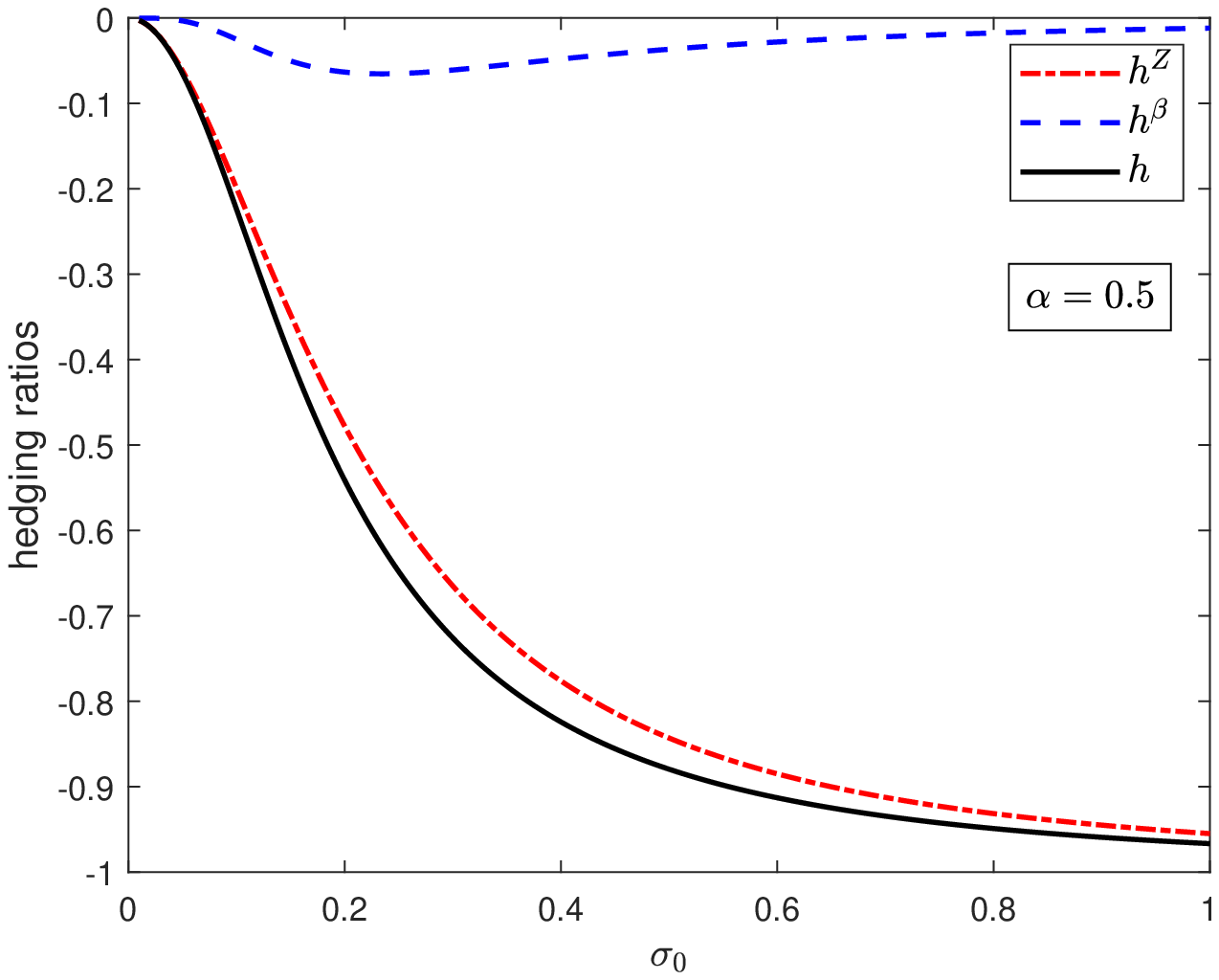}
		\caption{Effects of $\sigma_0$ ($\alpha=0.5$).}
		\label{fig:sigma01}
	\end{minipage}\hfill
	\begin{minipage}{0.5\textwidth}
		\centering
		\includegraphics[totalheight=6cm]{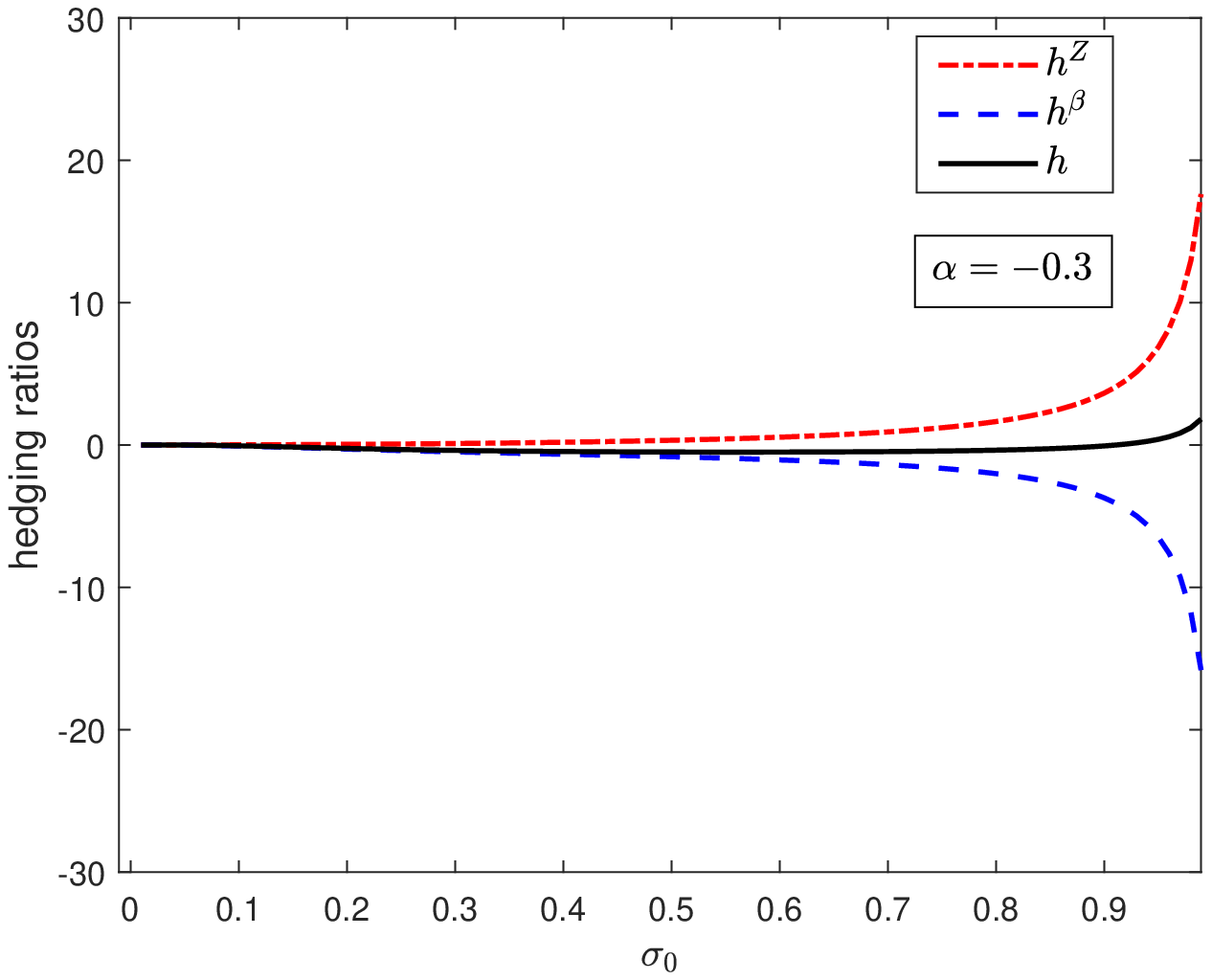}
		\caption{Effects of $\sigma_0$ ($\alpha=-0.3$).}
		\label{fig:sigma02}
	\end{minipage}\hfill
\end{figure}

\newpage
\begin{appendices}

\section{Posterior Distribution of $ Z$}\label{app:posterior}

Now we are going to investigate the posterior distribution of $ Z$. % and the dynamics of $\beta$.
%At initial time, the prior distribution function of $ Z$ is denoted by $F_0(\cdot)$. Suppose   that
%\begin{equation}\label{assup1}
%	\exists \eta>0, \quad \mathbf{E}[e^{\eta Z^{2}}]=\int_{\mathbf{R}} e^{\eta z^2} F_0(\rd z)<+\infty.
%\end{equation}
%Now we are going to derive the dynamics of $\beta$.
Following \cite{Bis2019}, let probability measure $\mathbb{Q}$ be defined by
\begin{align*}
\frac{\rd \mathbb{Q}}{\rd \mathbb{P}}\bigg|_{\mathcal{F}_T}\trieq\exp\left(-\frac{ Z-r}{\sigma}W_T-{1\over2}\left({ Z-r\over \sigma}\right)^2T\right),
\end{align*}
and  process $W^\mathbb{Q}=\{W^\mathbb{Q}_t:  t\in[0,T]\}$  by
\[
W^\mathbb{Q}_t\trieq W_t+\frac{ Z-r}{\sigma}t,\quad t\in[0,T].
\]
Then we have
\[
{\rd S_t\over S_t}=r\rd t+\sigma\rd W^\mathbb{Q}_t.
\]
Recalling that $Y_t=\log S_t$, we have
$$Y_t-Y_0=\left(r-{1\over2}\sigma^2\right)t+\sigma W^\mathbb{Q}_t.$$
The next proposition characterizes the posterior distribution of $ Z$, which extends some results of \cite{Bis2019}.
%To show the property of $ Z$ given information before time $t$, we present the following proposition.
\begin{proposition}\label{prop:mu}
	For any Borel function $f:\mathbf{R}\to\mathbf{R}$ such that $\mathbf{E}[|f( Z)|]<\infty$, we have
%\begin{equation}
%	\mathbf{E}[f( Z)\mid \mathcal{F}_t^S]=\psi(t,W^\mathbb{Q}_t),
%\end{equation}
%where 	
%\begin{equation}\label{eq:psi}
%\mathbf{E}[f( Z)\mid \mathcal{F}_t^S]\color{red}=\frac{\displaystyle\int_{\mathbf{R}} f(z) \exp\left(\frac{z-r}{\sigma^2}\left(Y_t-Y_0-(r-{1\over 2}\sigma^2)t\right)-\frac{ (z-r)^2}{2\sigma^2}t\right) F_0(\rd z)}{\displaystyle\int_{\mathbf{R}} \exp\left(\frac{z-r}{\sigma^2}\left(Y_t-Y_0-(r-{1\over 2}\sigma^2)t\right)-\frac{ (z-r)^2}{2\sigma^2}t\right) F_0(\rd z)}.
%\end{equation}
\begin{equation}\label{eq:psi}
	\mathbf{E}[f( Z)\mid \mathcal{F}_t^S]
	=\frac{\displaystyle\int_{\mathbf{R}} f(z) \exp\left(\frac{z}{\sigma^2}\left[Y_t-Y_0+{1\over 2}\sigma^2t\right]-\frac{z^2}{2\sigma^2}t\right) F_0(\rd z)}
	{\displaystyle\int_{\mathbf{R}} \exp\left(\frac{z}{\sigma^2}\left[Y_t-Y_0+{1\over 2}\sigma^2t\right]-\frac{z^2}{2\sigma^2}t\right) F_0(\rd z)}.
\end{equation}
\end{proposition}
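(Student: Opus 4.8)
The plan is to combine the reference measure $\mathbb{Q}$ introduced above with the abstract Bayes rule (Kallianpur--Striebel formula). Since $(\rd\mathbb{Q}/\rd\mathbb{P})|_{\mathcal{F}_T}$ is strictly positive, $\mathbb{P}\sim\mathbb{Q}$ on $\mathcal{F}_T$, and the first step is to identify the density $L_t\trieq(\rd\mathbb{P}/\rd\mathbb{Q})|_{\mathcal{F}_t}$ for $t\in[0,T]$. Substituting $W_T=W^{\mathbb{Q}}_T-\frac{Z-r}{\sigma}T$ into the given Radon--Nikodym derivative rewrites it as $(\rd\mathbb{P}/\rd\mathbb{Q})|_{\mathcal{F}_T}=\exp\!\big(\frac{Z-r}{\sigma}W^{\mathbb{Q}}_T-\frac12(\frac{Z-r}{\sigma})^2T\big)$, which, conditionally on $Z$, is a genuine exponential martingale under $\mathbb{Q}$; hence $L_t=\exp\!\big(\frac{Z-r}{\sigma}W^{\mathbb{Q}}_t-\frac12(\frac{Z-r}{\sigma})^2t\big)$ and, in particular, $\mathbf{E}^{\mathbb{Q}}[L_t\mid Z]=1$.

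Next I would record two properties of $\mathbb{Q}$ that make the posterior computation transparent. First, $Z$ retains its prior law $F_0$ under $\mathbb{Q}$, because $\mathbf{E}^{\mathbb{P}}[(\rd\mathbb{Q}/\rd\mathbb{P})|_{\mathcal{F}_T}\mid Z]=1$ ($W_T$ being Gaussian and, under $\mathbb{P}$, independent of $Z$). Second, $W^{\mathbb{Q}}$ is a $\mathbb{Q}$-Brownian motion \emph{independent of} $Z$: this is a conditional Girsanov argument, since for each frozen value $z$ the density turns $W$ into the Brownian motion $W^{\mathbb{Q}}$ under the regular conditional law $\mathbb{Q}(\,\cdot\mid Z=z)$, and the resulting Brownian law is the same for all $z$. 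Since $Y_t-Y_0=(r-\frac12\sigma^2)t+\sigma W^{\mathbb{Q}}_t$, we have $\mathcal{F}^S_t=\sigma(W^{\mathbb{Q}}_s:s\le t)$, so $Z\perp\mathcal{F}^S_t$ under $\mathbb{Q}$ and $W^{\mathbb{Q}}_t$ is $\mathcal{F}^S_t$-measurable.

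The third step applies the Bayes rule with the sub-$\sigma$-field $\mathcal{F}^S_t\subset\mathcal{F}_t$ and the $\mathcal{F}_t$-measurable integrand $f(Z)$, giving
$$\mathbf{E}^{\mathbb{P}}[f(Z)\mid\mathcal{F}^S_t]=\frac{\mathbf{E}^{\mathbb{Q}}[f(Z)L_t\mid\mathcal{F}^S_t]}{\mathbf{E}^{\mathbb{Q}}[L_t\mid\mathcal{F}^S_t]}.$$
Because $L_t$ is a deterministic function of $Z$ and of the $\mathcal{F}^S_t$-measurable variable $W^{\mathbb{Q}}_t$, and $Z\perp\mathcal{F}^S_t$ under $\mathbb{Q}$, freezing $z$ and integrating against $F_0$ yields
$$\mathbf{E}^{\mathbb{Q}}[f(Z)L_t\mid\mathcal{F}^S_t]=\int_{\mathbf{R}}f(z)\exp\!\Big(\tfrac{z-r}{\sigma}W^{\mathbb{Q}}_t-\tfrac12\big(\tfrac{z-r}{\sigma}\big)^2t\Big)F_0(\rd z),$$
and likewise the denominator with $f\equiv1$; finiteness a.s.\ is automatic because completing the square bounds the integrand by $\exp((W^{\mathbb{Q}}_t)^2/(2t))$, so the integral is at most $\exp((W^{\mathbb{Q}}_t)^2/(2t))\,\mathbf{E}[|f(Z)|]<\infty$. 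Finally I would substitute $W^{\mathbb{Q}}_t=\sigma^{-1}(Y_t-Y_0-(r-\frac12\sigma^2)t)$ and regroup the exponent: a short computation shows it equals $\frac{z}{\sigma^2}[Y_t-Y_0+\frac12\sigma^2t]-\frac{z^2}{2\sigma^2}t+c_t$ with $c_t=-\frac{r}{\sigma^2}(Y_t-Y_0)+\frac{r^2-r\sigma^2}{2\sigma^2}t$ independent of $z$, so the factor $e^{c_t}$ cancels between numerator and denominator and \eqref{eq:psi} follows.

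The one genuinely delicate point is the conditional Girsanov step: that under $\mathbb{Q}$ the innovation $W^{\mathbb{Q}}$ decouples from the unknown drift $Z$, so the posterior collapses to an ordinary likelihood ratio with likelihood $L_t$. Everything else---the exponent bookkeeping and the cancellation---is routine, and I would keep it brief; I would also flag the minor measurability convention that $Z$ is $\mathcal{F}_0$-measurable, which is what lets me use $L_t$ (rather than $L_T$) as the density in the Bayes formula.
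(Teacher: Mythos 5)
Your proof is correct and takes essentially the same route as the paper's: change to the reference measure $\mathbb{Q}$, use that $Z$ and $W^{\mathbb{Q}}$ are independent under $\mathbb{Q}$ (with $Z$ keeping its prior law), and apply the abstract Bayes rule. The only cosmetic difference is that you identify the $\mathcal{F}_t$-density $L_t$ explicitly and spell out the conditional Girsanov argument, whereas the paper works with the $\mathcal{F}_T$-density, reduces to time $t$ via the tower property, and cites \citet[Proposition 1]{Bis2019} for the independence.
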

\proof
	By condition \eqref{assup1} and Girsanov's theorem, $W^\mathbb{Q}$ is a standard Brownian motion w.r.t. filtration $\{\mathcal{F}^S_t\}$ under the probability $\mathbb{Q}$. Moreover,  by \citet[Proposition 1]{Bis2019}, under the probability $\mathbb{Q}$, $ Z$ is independent of $W^\mathbb{Q}_t$ for all $t\in[0,T]$. Then by Bayes' rule, we have, $\forall\, t\in[0,T]$,
	\begin{align}%\label{equ:beta}
\mathbf{E}[f( Z)\mid\mathcal{F}^S_t]
			&=\frac{\mathbf{E}^\mathbb{Q}\left[\left.f( Z) \frac{\rd \mathbb{P}}{\rd \mathbb{Q}}\right|\mathcal{F}^S_t\right]}
			{\mathbf{E}^\mathbb{Q}\left[\left.\frac{\rd \mathbb{P}}{\rd \mathbb{Q}}\right|\mathcal{F}^S_t\right]}\nonumber
			\\
			&=\frac{\mathbf{E}^\mathbb{Q}\left[\left.f( Z) \exp\left(\frac{ Z-r}{\sigma}W^\mathbb{Q}_T-\frac{ ( Z-r)^2}{2\sigma^2}T\right)\right|\mathcal{F}^S_t\right]}
			{\mathbf{E}^\mathbb{Q}\left[\left.\exp\left(\frac{ Z-r}{\sigma}W^\mathbb{Q}_T-\frac{ ( Z-r)^2}{2\sigma^2}T\right)\right|\mathcal{F}^S_t\right]}\nonumber
			\\
			&=\frac{\mathbf{E}^\mathbb{Q}\left[\left.f( Z) \exp\left(\frac{ Z-r}{\sigma}W^\mathbb{Q}_t-\frac{ ( Z-r)^2}{2\sigma^2}t\right)\right|\mathcal{F}^S_t\right]}
			{\mathbf{E}^\mathbb{Q}\left[\left.\exp\left(\frac{ Z-r}{\sigma}W^\mathbb{Q}_t-\frac{ ( Z-r)^2}{2\sigma^2}t\right)\right|\mathcal{F}^S_t\right]}\nonumber
			\\
			&=\frac{\displaystyle\int_{\mathbf{R}} f(z) \exp\left(\frac{z}{\sigma^2}\left[Y_t-Y_0+{1\over 2}\sigma^2t\right]-\frac{z^2}{2\sigma^2}t\right) F_0(\rd z)}{\displaystyle\int_{\mathbf{R}} \exp\left(\frac{z}{\sigma^2}\left[Y_t-Y_0+{1\over 2}\sigma^2t\right]-\frac{z^2}{2\sigma^2}t\right) F_0(\rd z)}. \tag*{\qed}
	\end{align}

%\begin{corollary}\label{cor:posterior}
%{\color{red} If $Z\sim N\left(\beta_0,\sigma_0^2\right)$, then $Z|\mathcal{F}^S_t\sim N\left(\bbeta_t,\zeta(t)\right)$, where
%$\zeta(t)=(\sigma^{-2}_0+t\sigma^{-2})^{-1}$ and
%$\bbeta_t=\zeta(t)\left\{\sigma^{-2}\left[rt+\sigma W^\mathbb{Q}_t\right]+\sigma_0^{-2} \beta_{0}\right\}$.
%%\begin{eqnarray*}
%%	\bbeta_t=\zeta(t)\left\{\sigma^{-2}\left[rt+\sigma W^\mathbb{Q}_t\right]+\sigma_0^{-2} \beta_{0}\right\}.
%%\end{eqnarray*}
%}
%\end{corollary}

\section{Proof of Theorem \ref{thm:y}}\label{app:thm}
	First, a combination of condition (b), the Markov property of $(X^{z,{\tilde \pi}^*},Y^z)$ w.r.t. $\{\F^W_t\}$, \eqref{equ:hjbg2}, and the independence between $Z$ and $\{\F^W_t\}$ leads to, $\forall z\in\range(Z)$, ${\tilde \pi}\in\tilde\Pi$ and $(t,x,y)\in[0,T)\times\Rbf^2$,
\[
\begin{split}{\tilde g}^z(t,x,y)
	%			&=\left(\mathbf{E}\left[\left.{\tilde g}^z\left(T,X^{z,{\tilde \pi}^*}_T,Y^{z}_T\right)\,\right|\, \F^W_t\right]\right)_{X^{z,{\tilde \pi}^*}_t=x,Y^z_t=y}\\
	%			&=\left(\mathbf{E}\left[U(X^{z,{\tilde \pi}^*}_T)\mid \F^W_t\right]\right)_{X^{z,{\tilde \pi}^*}_t=x,Y^z_t=y}
	&=\mathbf{E}\left[\left.{\tilde g}^z\left(T,X^{z,{\tilde \pi}^*}_T,Y^{z}_T\right)\,\right|\, X^{z,{\tilde \pi}^*}_t=x,Y^z_t=y\right]
	\\&=\mathbf{E}\left[U(X^{z,{\tilde \pi}^*}_T)\mid X^{z,{\tilde \pi}^*}_t=x,Y^z_t=y\right]
	\\&=\mathbf{E}\left[U(X^{z,{\tilde \pi}^*}_T)\mid Z=z, X^{z,{\tilde \pi}^*}_t=x,Y^z_t=y\right]
	\\&=\mathbf{E}\left[U(X^{ Z,{\tilde \pi}^*}_T)\mid Z=z,X^{Z,{\tilde \pi}^*}_t=x,Y^Z_t=y\right]
	\\&=\mathbf{E}\left[U(X^{{\tilde \pi}^*}_T)\mid Z=z,X^{{\tilde \pi}^*}_t=x,Y_t=y\right].
	%	\\&=\mathbf{E}\left[U(X^{z,{\tilde \pi}^*}_t)\mid Z=z, \F^W_t\right]\\&=\mathbf{E}\left[U(X^{z,{\tilde \pi}^*}_t)\mid Z=z, \F^S_t\right]\\&=\mathbf{E}\left[U(X^{ Z,{\tilde \pi}^*}(T))\mid Z=z, \F^S_t\right].
\end{split}
\]

Now we show that ${\tilde \pi}^*$ is an equilibrium strategy.

By \eqref{eq:gzpi:z}, $\forall z\in\range(Z)$, ${\tilde \pi}\in\tilde\Pi$, $(t,x,y)\in[0,T)\times\Rbf^2$ and $h\in(0,T-t)$,
\begin{equation}\label{eq:g:pi:th}
	\begin{split}
		{\tilde g}^{z, {\tilde \pi}_{t,h}}(t,x,y)&=
		\mathbf{E}\left [U(X^{z, {\tilde \pi}_{t,h}}_T)\mid X^{z,{\tilde \pi}_{t,h}}_t=x, Y^z_t=y\right]\\
		&=\mathbf{E} \left[\mathbf{E}\left[U(X^{z, {\tilde \pi}_{t,h}}_T)\mid X^{z,{\tilde \pi}_{t,h}}_{t+h}, Y^z_{t+h}\right]\mid X^{z,{\tilde \pi}_{t,h}}_t=x, Y^z_t=y\right]
		%		\\
		%		&=\mathbf{E}\left [\left.\mathbf{E}\left[\left. U(X^{t+h,X^{z,{\tilde \pi}}_{t+h},{\tilde \pi}^*}_T)\,\right|\,
		%		\mathcal{F}_{t+h}^W\right]
		%		\,\right|\,\mathcal{F}_t^W\right]
		\\
		&=\mathbf{E} \left[
		{\tilde g}^z(t+h,X^{z,{\tilde \pi}_{t,h}}_{t+h},Y^{z}_{t+h})
		\mid X^{z,{\tilde \pi}_{t,h}}_t=x, Y^z_t=y\right]\\
		&=\mathbf{E} \left[
		{\tilde g}^z(t+h,X^{z,{\tilde \pi}}_{t+h},Y^{z}_{t+h})
		\mid X^{z,{\tilde \pi}}_t=x, Y^z_t=y\right],
	\end{split}
\end{equation}
where the second equality is from the Markov property of $(X^{z,{\tilde \pi}_{t,h}},Y^z)$ w.r.t. $\{\F^W_t\}$.

Let  ${\tilde \pi}\in\tilde\Pi$ and $(t,x,y)\in[0,T)\times\Rbf^2$.
By Assumption \ref{assu0}, It\^{o}'s formula and the Markov property of $(X^{z,{\tilde \pi}_{t,h}},Y^z)$ w.r.t. $\{\F^W_t\}$,
there exists $\tilde t\in(t,T)$ such that, $\forall z\in\range(Z)$ and $h\in(0,\tilde{t}-t)$,
\begin{align*}
	&{\tilde g}^{z, {\tilde \pi}_{t,h}}(t,x,y)-{\tilde g}^z(t,x,y)\\
	=&\mathbf{E}\left[\left.{\tilde g}^z\left(t+h,X^{z,{\tilde \pi}}_{t+h},Y^{z}_{t+h}\right)\,\right |\, X^{z,{\tilde \pi}_{t,h}}_t=x, Y^z_t=y\right]-{\tilde g}^z(t,x,y)\\
	%=&\mathbf{E}\left[\left.{\tilde g}^z\left(t+h,X^{z,{\tilde \pi}}_{t+h},Y^{z}_{t+h}\right)\,\right |\,\F^W_t, X^{z,{\tilde \pi}_{t,h}}_t=x, Y^z_t=y\right]-{\tilde g}^z(t,x,y)\\
	%\le& \mathbf{E}\left[\left.\int_t^{t+h}\mathcal{\tilde A}^{z,{\tilde \pi}} {\tilde g}^z(u,X^{z,{\tilde \pi}}_u,Y^{z}_u)\mathrm{d}u\,\right|\,\F^W_t, X^{z,{\tilde \pi}_{t,h}}_t=x, Y^z_t=y\right]\\
	\le & \mathbf{E}\left[\left.\int_t^{t+h}\mathcal{\tilde A}^{z,{\tilde \pi}} {\tilde g}^z(u,X^{z,{\tilde \pi}}_u,Y^{z}_u)\mathrm{d}u\,\right|\, X^{z,{\tilde \pi}_{t,h}}_t=x, Y^z_t=y\right].
\end{align*}
%and hence
%	\begin{equation*}
%		\begin{split}
%			{\tilde g}^{z, {\tilde \pi}_{t,h}}(t,x,y)-{\tilde g}^z(t,x,y)
%			%&=\mathbf{E}\left[\left.\int_t^{t+h}\mathcal{\tilde A}^{z,{\tilde \pi}} {\tilde g}^z(u,X^{z,{\tilde \pi}}_u,Y^{z}_u)\mathrm{d}u\,\right|\,\F^W_t\right]
%			\le\mathbf{E}\left[\left.\int_t^{t+h}\mathcal{\tilde A}^{z,{\tilde \pi}} {\tilde g}^z(u,X^{z,{\tilde \pi}}_u,Y^{z}_u)\mathrm{d}u\,\right|\,X^{z,{\tilde \pi}}_t=x,Y^{z}_t=y\right].
%		\end{split}
%	\end{equation*}

Let ${\tilde \pi}\in\tilde\Pi$ and $(t,x,y)\in[0,T)\times\Rbf^2$. By Assumption \ref{assu1}, we have, $\forall z\in\range(Z)$,
\begin{equation}\label{eq:lim:hth:gz}
	\mathop{\lim}\limits_{h\rightarrow 0^+}\mathbf{E}\left[\left.\frac{1}{h}\int_t^{t+h}\mathcal{\tilde A}^{z,{\tilde \pi}} {\tilde g}^z(u,X^{z,{\tilde \pi}}_u,Y^{z}_u)\mathrm{d}u\,\right|\,X^{z,{\tilde \pi}}_t=x,Y^{z}_t=y\right]
	=\mathcal{\tilde A}^{z,{\tilde \pi}}{\tilde g}^z(t,x,y).
\end{equation}
Moreover, $\forall z\in\range(Z)$ and $h\in(0,T-t)$,
\begin{align*}
	&\mathbf{E}\left[\left.\frac{1}{h}\int_t^{t+h}\mathcal{\tilde A}^{z,{\tilde \pi}} {\tilde g}^z(u,X^{z,{\tilde \pi}}_u,Y^{z}_u)\mathrm{d}u\,\right|\,X^{z,{\tilde \pi}}_t=x,Y^{z}_t=y\right]\\
	=&\mathbf{E}\left[\left.\frac{1}{h}\int_t^{t+h}\mathcal{\tilde A}^{Z,{\tilde \pi}} {\tilde g}^Z(u,X^{z,{\tilde \pi}}_u,Y^{z}_u)\mathrm{d}u\,\right|\,Z=z, X^{z,{\tilde \pi}}_t=x,Y^{z}_t=y\right]
\end{align*}
and hence
\begin{align*}
	&\left(\mathbf{E}\left[\left.\frac{1}{h}\int_t^{t+h}\mathcal{\tilde A}^{z,{\tilde \pi}} {\tilde g}^z(u,X^{z,{\tilde \pi}}_u,Y^{z}_u)\mathrm{d}u\,\right|\,X^{z,{\tilde \pi}}_t=x,Y^{z}_t=y\right]\right)_{z=Z}\\
	=&\mathbf{E}\left[\left.\frac{1}{h}\int_t^{t+h}\mathcal{\tilde A}^{Z,{\tilde \pi}} {\tilde g}^Z(u,X^{Z,{\tilde \pi}}_u,Y^{Z}_u)\mathrm{d}u\,\right|\,Z,X^{Z,{\tilde \pi}}_t=x,Y^{Z}_t=y\right].
\end{align*}
Therefore, by \eqref{eq:lim:hth:gz},
\[
\mathop{\lim}\limits_{h\rightarrow 0^+}\mathbf{E}\left[\left.\frac{1}{h}\int_t^{t+h}\mathcal{\tilde A}^{Z,{\tilde \pi}} {\tilde g}^Z(u,X^{Z,{\tilde \pi}}_u,Y^{Z}_u)\mathrm{d}u\,\right|\,Z, X^{Z,{\tilde \pi}}_t=x,Y^{Z}_t=y\right]
=\mathcal{\tilde A}^{Z,{\tilde \pi}}{\tilde g}^Z(t,x,y).
\]

\paragraph{Case 1: $\phi$ is concave.} In this case, $\forall h\in(0,\tilde t-t)$, 	\begin{equation*}
	\begin{split}
		&\frac{\phi\left({\tilde g}^{z, {\tilde \pi}_{t,h}}(t,x,y)\right)-\phi\left({\tilde g}^z(t,x,y)\right)}{h}
		\\ \leq&\frac{1}{h}\phi^\prime\left({\tilde g}^z(t,x,y)\right)[{\tilde g}^{z, {\tilde \pi}_{t,h}}(t,x,y)-{\tilde g}^z(t,x,y)]
		\\ \le &\phi^\prime\left({\tilde g}^z(t,x,y)\right)\mathbf{E}\left[\left.\frac{1}{h}\int_t^{t+h}\mathcal{\tilde A}^{z,{\tilde \pi}} {\tilde g}^z(u,X^{z,{\tilde \pi}}_u,Y^{z}_u)\mathrm{d}u \,\right|\,  X^{z,{\tilde \pi}}_t=x,Y^{z}_t=y\right].
	\end{split}
\end{equation*}	
Then %{\color{red} ($\tilde t$ should be uniform w.r.t. $z$ so that the above inequality holds uniformly w.r.t. $z$. Therefore, Assumption \ref{assu0} is updated)}
\begin{equation*}
	\begin{split}
		& \mathop{\limsup}\limits_{h\rightarrow 0^+} \frac{J(t,x,y, {\tilde \pi}_{t,h})-J(t,x,y,{\tilde \pi}^*)}{h}
		\\\leq&\mathop{\limsup}\limits_{h\rightarrow 0^+}\mathbf{E}\left[\left. \phi^\prime\left({\tilde g}^Z(t,x,y)\right)\mathbf{E}\left[\left.\frac{1}{h}\int_t^{t+h}\mathcal{\tilde A}^{Z,{\tilde \pi}} {\tilde g}^Z(u,X^{Z,{\tilde \pi}}_u,Y^{Z}_u)\mathrm{d}u\right| Z, \genfrac{}{}{0pt}{}{X^{Z,{\tilde \pi}}_t=x}{Y^{Z}_t=y}\right]\right|Y_t=y\right]
		%		\\=&-\mathop{\limsup}\limits_{h\rightarrow 0^+}\mathbf{E}\left[\left. \phi^\prime\left({\tilde g}^Z(t,x,y)\right)\mathbf{E}\left[\left.\frac{1}{h}\int_t^{t+h}\mathcal{\tilde A}^{Z,{\tilde \pi}} {\tilde g}^Z(u,X^{z,{\tilde \pi}}_u,Y^{z}_u)\mathrm{d}u\right|  Z,X^{z,{\tilde \pi}}_t=x,Y^{z}_t=y\right]\right|Y_t=y\right]
		%		\\=&-\mathbf{E}\left[\left.\mathop{\limsup}\limits_{h\rightarrow 0^+} \phi^\prime\left({\tilde g}^Z(t,x,y)\right)\mathbf{E}\left[\left.\frac{1}{h}\int_t^{t+h}\mathcal{\tilde A}^{Z,{\tilde \pi}} {\tilde g}^Z(u,X^{z,{\tilde \pi}}_u,Y^{z}_u)\mathrm{d}u\right|  Z,X^{z,{\tilde \pi}}_t=x,Y^{z}_t=y\right]\right|Y_t=y\right]
		\\=&\mathbf{E}\left[ \phi^\prime\left({\tilde g}^ Z(t,x,y)\right)
		\mathcal{\tilde A}^{ Z,{\tilde \pi}}{\tilde g}^ Z(t,x,y)\mid Y_t=y\right]
		\\\leq&0,
	\end{split}
\end{equation*}	
where the equality is from Assumption \ref{assu2} and the dominated convergence theorem.  Therefore, ${\tilde \pi}^*$ is an equilibrium strategy.

\paragraph{Case 2: $\phi$ is convex.} In this case, $\forall h\in(0,\tilde t-t)$,
\begin{equation*}
	\begin{split}
		&\frac{\phi\left({\tilde g}^{z, {\tilde \pi}_{t,h}}(t,x,y)\right)-\phi\left({\tilde g}^z(t,x,y)\right)}{h}
		\\ \leq&\frac{1}{h}\phi^\prime\left({\tilde g}^{z, {\tilde \pi}_{t,h}}(t,x,y)\right)[{\tilde g}^{z, {\tilde \pi}_{t,h}}(t,x,y)-{\tilde g}^z(t,x,y)]
		\\ \le&\phi^\prime\left({\tilde g}^{z, {\tilde \pi}_{t,h}}(t,x,y)\right)\mathbf{E}\left[\left.\frac{1}{h}\int_t^{t+h}\mathcal{\tilde A}^{z,{\tilde \pi}} {\tilde g}^z(u,X^{z,{\tilde \pi}}_u,Y^{z}_u)\mathrm{d}u\,\right|\,  X^{z,{\tilde \pi}}_t=x,Y^{z}_t=y\right].
	\end{split}
\end{equation*}	
Moreover, by Assumption \ref{assu3} and \eqref{eq:g:pi:th},
$\mathop{\lim}\limits_{h\rightarrow 0^+}{\tilde g}^{z, {\tilde \pi}_{t,h}}(t,x,y) ={\tilde g}^z(t,x,y)$ and hence, by the continuity of $\phi^\prime$,  $$\mathop{\lim}\limits_{h\rightarrow 0^+}\phi^\prime\left({\tilde g}^{z, {\tilde \pi}_{t,h}}(t,x,y)\right) =\phi^\prime\left({\tilde g}^z(t,x,y)\right).$$
%Similarly, we have
%\begin{equation*}
%	\begin{split}
%		& \mathop{\liminf}\limits_{h\rightarrow 0^+} \frac{J(t,x,y,{\tilde \pi}^*)-J(t,x,y, {\tilde \pi}_{t,h})}{h}
%		\\ \geq&\mathop{\liminf}\limits_{h\rightarrow 0^+}\mathbf{E}\left[ -\phi^\prime\left({\tilde g}^{Z, {\tilde \pi}_{t,h}}(t,x,y)\right)\mathbf{E}\left[\frac{1}{h}\int_t^{t+h}\mathcal{\tilde A}^{Z,{\tilde \pi}} {\tilde g}^Z(u,X^{z,{\tilde \pi}}_u,Y^{z}_u)\mathrm{d}u|  Z,\F_t^S\right]|\mathcal{F}_t^S\right]
%		\\ =&-\mathop{\limsup}\limits_{h\rightarrow 0^+}\mathbf{E}\left[ \phi^\prime\left({\tilde g}^{Z, {\tilde \pi}_{t,h}}(t,x,y)\right)\mathbf{E}\left[\frac{1}{h}\int_t^{t+h}\mathcal{\tilde A}^{Z,{\tilde \pi}} {\tilde g}^Z(u,X^{z,{\tilde \pi}}_u,Y^{z}_u)\mathrm{d}u|  Z,\F_t^S\right]|\mathcal{F}_t^S\right]
%		\\ =&-\mathbf{E}\left[ \mathop{\limsup}\limits_{h\rightarrow 0^+}\phi^\prime\left({\tilde g}^{Z, {\tilde \pi}_{t,h}}(t,x,y)\right)\mathbf{E}\left[\frac{1}{h}\int_t^{t+h}\mathcal{\tilde A}^{Z,{\tilde \pi}} {\tilde g}^Z(u,X^{z,{\tilde \pi}}_u,Y^{z}_u)\mathrm{d}u|  Z,\F_t^S\right]|\mathcal{F}_t^S\right]
%		\\=&-\mathbf{E}\left[ \phi^\prime\left({\tilde g}^ Z(t,x,y)\right)
%		\mathcal{\tilde A}^{ Z,{\tilde \pi}}{\tilde g}^ Z(t,x,y)|\mathcal{F}_t^S\right]
%		\\ \geq&0.
%	\end{split}
%\end{equation*}	
Then similarly to Case 1, we can show that
\begin{eqnarray*}
	\mathop {\limsup }\limits_{h\rightarrow 0^+} \frac{J(t,x,y, {\tilde \pi}_{t,h})-J(t,x,y,{\tilde \pi}^*)}{h}\leq 0,
\end{eqnarray*}
and hence, ${\tilde \pi}^*$ is an equilibrium strategy.
\qed

\section{Some Results on $\bbeta^z$}\label{app:bbeta}

%We present several useful results about process $\bbeta^z$ in this appendix.

From \eqref{equ:beta:mu}, we obtain
\begin{equation}\label{equ:betaclosed}
	\bbeta^z_s=\zeta(s)\left[\zeta(t)^{-1}\bbeta^z_t+ z\sigma^{-2}(s-t)+\sigma^{-1}(W_s-W_t)\right],\quad s\ge t.
\end{equation}
Then
 \begin{equation}\label{equ:betain}
\begin{split}
	{\bbeta^z_s}^2&=\zeta^{2}(s)\left[\zeta(t)^{-1}\bbeta^z_t+ z\sigma^{-2}(s-t)+\sigma^{-1}(W_s-W_t)\right]^2
	\\ &\leq {3\sigma_0^4} \left[{\zeta(t)}^{-2}{\bbeta^z_t}^2+ z^2\sigma^{-4}(s-t)^2+\sigma^{-2}(W_s-W_t)^2\right].
\end{split}
\end{equation}	

\begin{lemma}\label{lemma1}
	For any $C>0$ and $\epsilon>0$, there exists some $h>0$  such that, $\forall t\in[0,T)$, $\beta\in\Rbf$ and $z\in\range(Z)$,
	%\begin{equation}
	%	\mathbf{E}\left[\exp\left\{C{\bbeta^z_s}^2\right\}\big|\mathcal{F}_t^W\right]\leq C_1 \exp(\epsilon z^2),
	%\end{equation}
	%i.e.,
	\begin{equation}\label{inequ:lemma1}
		\left\{\mathbf{E}\left[\left.\exp\left\{C\int_t^{(t+h)\land T}{\bbeta^z_u}^2
		\rd u\right\}\,\right|\, \bbeta^{z}_t=\beta \right]\right\}\leq {4\over 3} e^{\epsilon (z^2+\beta^2)}
	\end{equation}
	and
	\begin{equation}\label{inequ:lemma2}
		\sup_{s\in[t,(t+h)\land T)}\left\{\mathbf{E}\left[\left.\exp\left\{C{\bbeta^z_s}^2\right\}\,\right|\, \bbeta^{z}_t=\beta\right]\right\}\leq \sqrt{2} e^{\epsilon z^2+3C\sigma_0^4{\zeta(T)}^{-2}\beta^2}.
	\end{equation}
%	where $C_{1,\epsilon}$ is some constant which is independent of $z$.	
\end{lemma}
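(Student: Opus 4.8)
The plan is to deduce both bounds from the explicit Gaussian representation \eqref{equ:betaclosed}, by choosing $h$ small relative to the fixed constants $C,\epsilon,\sigma,\sigma_0,T$. The key reduction is that $\bbeta^z_t$ is $\mathcal{F}^W_t$-measurable, so under the conditional law $\mathbb{P}[\,\cdot\mid\bbeta^z_t=\beta]$ the increments $\{W_s-W_t\}_{s\ge t}$ still form a standard Brownian motion started at $0$; hence, by \eqref{equ:betaclosed}, on $\{\bbeta^z_t=\beta\}$ the process $\{\bbeta^z_s\}_{s\ge t}$ is the explicit Gaussian process obtained by replacing $\bbeta^z_t$ with $\beta$. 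We shall use repeatedly that $\zeta$ is decreasing on $[0,T]$, so $\zeta(t)^{-1}\le\zeta(T)^{-1}$, together with the pointwise bound \eqref{equ:betain}. Everything is then reduced to the elementary Gaussian identity $\mathbf{E}[e^{aG^2}]=(1-2a\,\mathrm{Var}(G))^{-1/2}$ for a centred Gaussian $G$ with $2a\,\mathrm{Var}(G)<1$, plus a Jensen/Fubini estimate for time integrals of $(W_\cdot-W_t)^2$.

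For \eqref{inequ:lemma2}: fix $s\in[t,(t+h)\land T)$. By \eqref{equ:betain}, with $\bbeta^z_t=\beta$ and $\zeta(t)^{-2}\le\zeta(T)^{-2}$,
$$C{\bbeta^z_s}^2\le 3C\sigma_0^4\zeta(T)^{-2}\beta^2+3C\sigma_0^4\sigma^{-4}(s-t)^2z^2+3C\sigma_0^4\sigma^{-2}(W_s-W_t)^2,$$
where the first term is deterministic. Since $W_s-W_t\sim N(0,s-t)$ under $\mathbb{P}[\,\cdot\mid\bbeta^z_t=\beta]$, the Gaussian identity with $a=3C\sigma_0^4\sigma^{-2}$ gives $\mathbf{E}\big[\exp(3C\sigma_0^4\sigma^{-2}(W_s-W_t)^2)\bigm|\bbeta^z_t=\beta\big]=(1-6C\sigma_0^4\sigma^{-2}(s-t))^{-1/2}$ as soon as $6C\sigma_0^4\sigma^{-2}h<1$. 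Combining the three factors and using $s-t\le h$,
$$\mathbf{E}\big[e^{C{\bbeta^z_s}^2}\bigm|\bbeta^z_t=\beta\big]\le e^{3C\sigma_0^4\zeta(T)^{-2}\beta^2}\,e^{3C\sigma_0^4\sigma^{-4}h^2z^2}\,\big(1-6C\sigma_0^4\sigma^{-2}h\big)^{-1/2},$$
uniformly in $t,\beta,z$. Choosing $h$ so small that $3C\sigma_0^4\sigma^{-4}h^2\le\epsilon$ and $6C\sigma_0^4\sigma^{-2}h\le\tfrac12$ makes the last factor $\le\sqrt2$, and taking $\sup_s$ yields \eqref{inequ:lemma2}.

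For \eqref{inequ:lemma1}: write $h'\trieq(t+h)\land T-t\le h$. Integrating the displayed bound (with $s$ replaced by $u$) over $u\in[t,t+h']$,
$$C\int_t^{t+h'}{\bbeta^z_u}^2\rd u\le 3C\sigma_0^4\zeta(T)^{-2}h'\beta^2+C\sigma_0^4\sigma^{-4}h'^3z^2+3C\sigma_0^4\sigma^{-2}\int_t^{t+h'}(W_u-W_t)^2\rd u,$$
the first two terms being deterministic and, for $h$ small, $\le\epsilon\beta^2$ and $\le\epsilon z^2$ respectively. For the random term, Jensen's inequality applied to $x\mapsto e^{3C\sigma_0^4\sigma^{-2}h'x}$ and the probability measure $\rd u/h'$ on $[t,t+h']$, followed by Fubini and the Gaussian identity above, gives
$$\mathbf{E}\Big[\exp\Big(3C\sigma_0^4\sigma^{-2}\int_t^{t+h'}(W_u-W_t)^2\rd u\Big)\Bigm|\bbeta^z_t=\beta\Big]\le\big(1-6C\sigma_0^4\sigma^{-2}h^2\big)^{-1/2},$$
valid once $6C\sigma_0^4\sigma^{-2}h^2<1$. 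Shrinking $h$ further so that this last quantity is $\le\tfrac43$ yields \eqref{inequ:lemma1} uniformly in $t,\beta,z$; taking the smaller of the two $h$'s selected gives a single $h$ valid for both estimates.

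I expect the only genuinely delicate point to be the opening reduction: one must argue cleanly that conditioning on the value of the $\mathcal{F}^W_t$-measurable random variable $\bbeta^z_t$ leaves the future Brownian increments distributed as an independent standard Brownian motion, so that all conditional expectations collapse to the explicit Gaussian computations above. Once that is in place the rest is bookkeeping, the point being that every threshold imposed on $h$ depends only on $C,\epsilon,\sigma,\sigma_0,T$ and not on $(t,\beta,z)$, which is automatic because $\zeta(t)^{-1}$ has been bounded by $\zeta(T)^{-1}$ and the time lags $s-t,u-t$ by $h$ throughout.
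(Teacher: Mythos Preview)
Your proposal is correct and follows essentially the same strategy as the paper: both arguments start from the pointwise bound \eqref{equ:betain}, separate the deterministic $\beta$- and $z$-contributions from the random Brownian increment, and then choose $h$ small so that all constants depend only on $C,\epsilon,\sigma,\sigma_0,T$. For \eqref{inequ:lemma2} your argument is identical to the paper's.

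The one technical difference is in \eqref{inequ:lemma1}: to bound $\mathbf{E}\big[\exp\big(\lambda\int_t^{t+h'}(W_u-W_t)^2\,\rd u\big)\big]$, the paper expands the exponential as a power series, uses Jensen on each term to bring the power inside the integral, evaluates Gaussian moments, and resums to get $(1-6C\sigma_0^4\sigma^{-2}h^2)^{-1}$; you instead apply Jensen to the probability measure $\rd u/h'$ and then the scalar Gaussian identity, obtaining the sharper $(1-6C\sigma_0^4\sigma^{-2}h^2)^{-1/2}$. Your route is shorter and gives a slightly better constant, but both approaches suffice since the bound only needs to be $\le 4/3$ for small $h$. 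The ``delicate point'' you flag about conditioning on $\bbeta^z_t$ is indeed routine (independence of Brownian increments from $\mathcal{F}^W_t$), and the paper does not dwell on it either.
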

\proof
First, we prove \eqref{inequ:lemma1}. By \eqref{equ:betain}, we have
	\begin{equation}\label{equ:lemma1}
	\begin{split}
		&\mathbf{E}\left[\left.\exp\left\{C\int_t^s{\bbeta^z_u}^2\rd u\right\}\,\right|\, \bbeta^z_t=\beta\right]\\
		\leq&\mathbf{E}\left[\exp\left\{3\sigma_0^4C\int_t^s\left[{\zeta(t)}^{-2}\beta^2+ z^2\sigma^{-4}(u-t)^2+\sigma^{-2}(W_u-W_t)^2\right]\rd u\right\}\right]
%		\\=&\exp\left\{3\sigma_0^2C\int_t^s\left[{\zeta(u)}^{-2}\beta^2+ z^2\sigma^{-4}(u-t)^2\right]\rd u\right\}\mathbf{E}\left[\exp\left\{3\sigma_0^2C \sigma^{-2}\int_t^s(W_u-W_t)^2\rd u\right\}\right]
		\\=&\exp\left\{3\sigma_0^4C{\zeta(t)}^{-2}(s-t)\beta^2\right\}
		\exp\left\{\sigma_0^4C\sigma^{-4} (s-t)^3z^2\right\}\\
		&\quad\times\mathbf{E}\left[\exp\left\{3\sigma_0^4C\sigma^{-2}\int_t^s(W_u-W_t)^2\rd u\right\}\right]
	\end{split}
\end{equation} Then, by Taylor's expansion and Jensen's inequality,
\[
\begin{split}
	&\mathbf{E}\left[\exp\left\{3\sigma_0^4C \sigma^{-2}\int_t^s(W_u-W_t)^2\rd u\right\}\right]\\
	=&
	1+\sum_{n=1}^\infty\frac{[3\sigma_0^4C \sigma^{-2}]^n}{n!}\mathbf{E}\left[
	\left(\int_t^s(W_u-W_t)^2\rd u\right)^n
	\right]\\
	%\le &
%	1+\sum_{n=1}^\infty\frac{[3\sigma_0^4C \sigma^{-2}]^n}{n!}\mathbf{E}\left[
%	\left(\left(\int_t^s(W_u-W_t)^{2n}\rd u\right)^{1\over n}\left(\int_t^s 1 \rd u\right)^{1-{1\over n}}\right)^n
%	\right]\\
	\le& 1+\sum_{n=1}^\infty\frac{[3\sigma_0^4C \sigma^{-2}]^n}{n!}\mathbf{E}\left[(s-t)^{n-1}\int_t^s(W_u-W_t)^{2n}\rd u\right]\\
	=&1+\sum_{n=1}^\infty\frac{[3\sigma_0^4C \sigma^{-2}]^n}{n!}(s-t)^{n-1}\int_t^s
	(u-t)^n {(2n)!\over 2^n n!}\rd u\\
	=&1+\sum_{n=1}^\infty\frac{[3\sigma_0^4C \sigma^{-2}]^n}{n!}(s-t)^{n-1}{(2n)!\over 2^n n!}{(s-t)^{n+1}\over n+1}\\
	\le &1+\sum_{n=1}^\infty[6\sigma_0^4C \sigma^{-2}(s-t)^2]^n\\
	=&{1\over 1-6\sigma_0^4C \sigma^{-2}(s-t)^2}
\end{split}
\]
if $6\sigma_0^4C \sigma^{-2}(s-t)^2<1$, i.e.,
$$s-t<\left({1\over 6}\sigma_0^{-4}\sigma^{2}C^{-1}\right)^{1\over 2}.$$
Moreover,  $\sigma_0^4C \sigma^{-4} (s-t)^3<\epsilon$ and
$3\sigma_0^4C{\zeta(T)}^{-2}(s-t)<\epsilon$
if
$$s-t<\min\left\{\left(\epsilon\sigma_0^{-4}\sigma^{4}C^{-1}\right)^{1\over 3},{1\over3}\epsilon\sigma_0^{-4}C^{-1}\zeta^2(T)\right\}.$$
Let
$$h={1\over2}\min\left\{\left({1\over 6}\sigma_0^{-4}\sigma^{2}C^{-1}\right)^{1\over 2},\left(\epsilon\sigma_0^{-4}\sigma^{4}C^{-1}\right)^{1\over 3},{1\over3}\epsilon\sigma_0^{-4}C^{-1}\zeta^2(T)
\right\}.$$
Then by \eqref{equ:lemma1},
	\begin{equation*}
	\begin{split}
		&\mathbf{E}\left[\left.\exp\left\{C\int_t^{(t+h)\land T}{\bbeta^z_u}^2\rd u\right\}\,\right|\, \bbeta^z_t=\beta\right]\\ \leq&\exp\left\{3\sigma_0^4C{\zeta(t)}^{-2}h\beta^2\right\} e^{\epsilon z^2} {1\over 1-6\sigma_0^4C \sigma^{-2}h^2}\\
	\leq&{1\over 1-6\sigma_0^4C \sigma^{-2}h^2}e^{\epsilon (z^2+\beta^2)}
		\leq{4\over 3}e^{\epsilon (z^2+\beta^2)}.
	\end{split}
\end{equation*}
Thus,  \eqref{inequ:lemma1} is proved.

Next, we show \eqref{inequ:lemma2}. By \eqref{equ:betain} and Lemma \ref{lma:exp:normal} below, we have
\begin{equation}\label{equ:cbeta}
\begin{split}
	&\mathbf{E}\left[\left.\exp\left\{C{\bbeta^z_s}^2\right\}\,\right|\,\bbeta^{z}_t=\beta\right]
	\\&\leq \mathbf{E}\left[\left.\exp\left\{3C\sigma_0^4[{\zeta(t)}^{-2}{\bbeta^z_t}^2+ z^2\sigma^{-4}(s-t)^2+\sigma^{-2}(W_s-W_t)^2]\right\}\,\right|\,\bbeta^{z}_t=\beta\right]
	\\&=\exp\left\{3C\sigma_0^4[{\zeta(t)}^{-2}\beta^2+ z^2\sigma^{-4}(s-t)^2]\right\}
	\mathbf{E}[e^{3C\sigma_0^4\sigma^{-2}(W_s-W_t)^2}]
	\\&=\exp\left\{3C\sigma_0^4{\zeta(t)}^{-2}\beta^2\right\}\exp\left\{3C\sigma_0^4 \sigma^{-4}(s-t)^2z^2\right\}(1-6C\sigma_0^4\sigma^{-2}(s-t))^{-0.5}
\end{split}
\end{equation}
if $6C\sigma_0^4\sigma^{-2}(s-t)<1$, i.e., $$s-t<{1\over 6}\sigma_0^{-4}\sigma^{2}C^{-1}.$$
Moreover,  $3C\sigma_0^4 \sigma^{-4}(s-t)^2<\epsilon$ if
$$s-t<\left({\epsilon\over 3}\sigma_0^{-4}\sigma^{4}C^{-1}\right)^{1\over 2}.$$ Let
$$h={1\over 2}\min\left\{{1\over 6}\sigma_0^{-4}\sigma^{2}C^{-1},\left({\epsilon\over 3}\sigma_0^{-4}\sigma^{4}C^{-1}\right)^{1\over 2}
\right\}.
$$
Then by \eqref{equ:cbeta}, for $s\in[t,(t+h)\land T)$,
\[
\begin{split}
	&\mathbf{E}\left[\left.\exp\left\{C{\bbeta^z_s}^2\right\}\,\right|\,\bbeta^{z}_t=\beta\right]
	\\&\leq(1-6C\sigma_0^4\sigma^{-2}h)^{-0.5}\exp\left\{3C\sigma_0^4{\zeta(t)}^{-2}\beta^2\right\}e^{\epsilon z^2}
	\\&\leq\sqrt{2}\exp\big\{3C\sigma_0^4{\zeta(T)}^{-2}\beta^2\big\}e^{\epsilon z^2}.
\end{split}
\]
Therefore, \eqref{inequ:lemma2} holds.\qed

\begin{lemma}\label{lma:exp:normal}
Assume that $\xi\sim N(\mu,\sigma^2)$.   If  $2a\sigma^2<1$, then $$\Ebf[e^{a\xi^2+b\xi}]={1 \over \sqrt{1-2a\sigma^2}}e^{{2a\mu^2+2\mu b+b^2\sigma^2\over 2(1-2a\sigma^2)}}
\text{ and }\ \Ebf[\xi e^{a\xi^2+b\xi}]={\mu+b\sigma^2 \over (1-2a\sigma^2)^{3\over 2}}e^{{2a\mu^2+2\mu b+b^2\sigma^2\over 2(1-2a\sigma^2)}}.$$
If  $2a\sigma^2\ge1$, then $\Ebf[e^{a\xi^2+b\xi}]=\infty$.
\end{lemma}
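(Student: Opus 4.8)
The plan is to reduce both identities to a single Gaussian integral by completing the square in the exponent, and to read off the divergence criterion from the sign of the resulting quadratic coefficient. Writing out the density of $\xi$, one has
$$\Ebf[e^{a\xi^2+b\xi}]=\frac{1}{\sqrt{2\pi}\,\sigma}\int_{\mathbf{R}}\exp\left\{ax^2+bx-\frac{(x-\mu)^2}{2\sigma^2}\right\}\rd x,$$
and the exponent is a quadratic in $x$ with leading coefficient $a-\frac{1}{2\sigma^2}=-\frac{1-2a\sigma^2}{2\sigma^2}$. The first step is to observe that this coefficient is strictly negative exactly when $2a\sigma^2<1$: when $2a\sigma^2>1$ the integrand tends to $+\infty$ as $|x|\to\infty$, and when $2a\sigma^2=1$ the exponent is affine in $x$, so in either case the integral is $+\infty$, which settles the last assertion.

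Assume now $2a\sigma^2<1$. The second step is the completion of the square: a direct computation gives
$$ax^2+bx-\frac{(x-\mu)^2}{2\sigma^2}=-\frac{1-2a\sigma^2}{2\sigma^2}(x-x_0)^2+c,\qquad x_0=\frac{\mu+b\sigma^2}{1-2a\sigma^2},\quad c=\frac{2a\mu^2+2\mu b+b^2\sigma^2}{2(1-2a\sigma^2)}.$$
Then $\Ebf[e^{a\xi^2+b\xi}]=e^{c}\cdot\frac{1}{\sqrt{2\pi}\,\sigma}\int_{\mathbf{R}}\exp\{-\frac{1-2a\sigma^2}{2\sigma^2}(x-x_0)^2\}\rd x$, and the remaining integral is a standard Gaussian integral equal to $\frac{1}{\sqrt{1-2a\sigma^2}}$, which gives the first formula. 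For the second identity, in the integral $\Ebf[\xi e^{a\xi^2+b\xi}]=\frac{e^{c}}{\sqrt{2\pi}\,\sigma}\int_{\mathbf{R}}x\exp\{-\frac{1-2a\sigma^2}{2\sigma^2}(x-x_0)^2\}\rd x$ one writes $x=x_0+(x-x_0)$: the $x_0$-part reproduces $x_0\cdot\Ebf[e^{a\xi^2+b\xi}]$, while the $(x-x_0)$-part is an odd Gaussian integral and vanishes. Since $x_0=\frac{\mu+b\sigma^2}{1-2a\sigma^2}$, this yields $\Ebf[\xi e^{a\xi^2+b\xi}]=\frac{\mu+b\sigma^2}{(1-2a\sigma^2)^{3/2}}e^{c}$. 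Equivalently, one may differentiate the first identity with respect to the parameter $b$, differentiation under the integral sign being legitimate because completing the square with $b$ replaced by $b\pm\epsilon$ produces a dominating function that is integrable uniformly for $b$ in a neighborhood.

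There is essentially no obstacle here; this is a routine exponential-of-a-quadratic calculation. The only points deserving care are the algebraic bookkeeping in the completion of the square, so that the constant $c$ matches the stated exponent $\frac{2a\mu^2+2\mu b+b^2\sigma^2}{2(1-2a\sigma^2)}$, and, if one prefers the differentiation route for the second identity, the justification of differentiation under the integral sign.
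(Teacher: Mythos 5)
Your proof is correct, and the algebra all checks out: completing the square gives the stated $x_0$ and $c$, the Gaussian normalization produces the factor $(1-2a\sigma^2)^{-1/2}$, and splitting $x=x_0+(x-x_0)$ kills the odd part to yield the second identity; the divergence argument for $2a\sigma^2\ge 1$ is also right (for $2a\sigma^2=1$ the exponent is affine, and $\int_{\mathbf{R}} e^{\text{affine}}\rd x=\infty$ whether or not the slope vanishes). Note that the paper states Lemma~\ref{lma:exp:normal} without proof, treating it as a standard Gaussian identity, so there is no paper argument to compare against; your completion-of-the-square derivation is exactly the canonical route one would supply.
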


Now we present a result on an exponential martingale generated by $\bbeta^z$.
\begin{lemma}\label{lemma2}
	If $f_1(t)$ and $f_2(t)$ are bounded Borel functions on $[0,T]$, then,  $\forall z\in\range(Z)$,
	\[
	\left\{G_t=\exp\left(-\frac{1}{2}
	\int_0^t\left[f_1(s)\bbeta_s^z+f_2(s)\right]^2\rd s+\int_0^t\left[f_1(s)\bbeta_s^z+f_2(s)\right]\rd W_s\right)\right\}_{t\in[0,T]}
	\]
	is a martingale w.r.t. filtration $\{\F_t^W\}$.
\end{lemma}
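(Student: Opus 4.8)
The plan is to recognize $G$ as the stochastic exponential $G=\mathcal{E}\!\left(\int_0^\cdot\theta^z_s\,\rd W_s\right)$ with $\theta^z_s\trieq f_1(s)\bbeta^z_s+f_2(s)$, and to prove it is a true $\{\F^W_t\}$-martingale on $[0,T]$ by a time-localization (patching) argument built on the exponential moment estimate \eqref{inequ:lemma1} of Lemma \ref{lemma1}. First I would note that, since $\bbeta^z$ has continuous paths and $f_1,f_2$ are bounded Borel functions, $\int_0^T(\theta^z_s)^2\,\rd s<\infty$ a.s., so $M^z_t\trieq\int_0^t\theta^z_s\,\rd W_s$ is a well-defined continuous local martingale and $G=\mathcal{E}(M^z)$ is a nonnegative continuous local martingale with $G_0=1$, hence a supermartingale. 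It is therefore a martingale on $[0,T]$ as soon as $\mathbf{E}[G_T]=1$, since a supermartingale with $\mathbf{E}[G_0]=\mathbf{E}[G_T]$ has constant expectation and is consequently a martingale.

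Next I would exploit boundedness of $f_1,f_2$ to get constants $c_1,c_2>0$ with $(\theta^z_s)^2\le c_1(\bbeta^z_s)^2+c_2$ for all $s\in[0,T]$ and all $z\in\range(Z)$. Fix $\epsilon>0$ small enough that $\mathbf{E}[e^{\epsilon(\bbeta^z_s)^2}]<\infty$ for every $s\in[0,T]$ and every $z$; this is possible because, by \eqref{equ:betaclosed}, $\bbeta^z_s$ is Gaussian with mean and variance bounded uniformly in $s\in[0,T]$. Applying Lemma \ref{lemma1} with $C=c_1/2$ and this $\epsilon$ produces $h>0$ such that, for all $t\in[0,T)$, $\beta\in\mathbf{R}$ and $z\in\range(Z)$,
\[
\mathbf{E}\!\left[\left.\exp\!\left\{\frac{c_1}{2}\int_t^{(t+h)\wedge T}(\bbeta^z_u)^2\,\rd u\right\}\,\right|\,\bbeta^z_t=\beta\right]\le\frac{4}{3}\,e^{\epsilon(z^2+\beta^2)}.
\]
Since \eqref{equ:beta:mu} is a closed SDE for $\bbeta^z$, the process $\bbeta^z$ is Markov w.r.t. $\{\F^W_t\}$, so the conditional expectation above equals $\mathbf{E}[\,\cdot\mid\F^W_t]$ evaluated at $\bbeta^z_t=\beta$.

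Then I would partition $[0,T]$ as $0=t_0<t_1<\dots<t_n=T$ with $t_k-t_{k-1}\le h$. On each subinterval, combining $(\theta^z_s)^2\le c_1(\bbeta^z_s)^2+c_2$ with the displayed bound and integrating out $\bbeta^z_{t_{k-1}}$ gives
$$\mathbf{E}\!\left[\exp\!\left\{\tfrac12\int_{t_{k-1}}^{t_k}(\theta^z_s)^2\,\rd s\right\}\right]\le\tfrac{4}{3}\,e^{c_2 h/2}\,e^{\epsilon z^2}\,\mathbf{E}\big[e^{\epsilon(\bbeta^z_{t_{k-1}})^2}\big]<\infty,$$
i.e., Novikov's condition holds on each $[t_{k-1},t_k]$. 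By the standard patched form of Novikov's criterion — Novikov on every piece of a finite partition implies the stochastic exponential is a true martingale on the whole interval, via the iterated conditioning $\mathbf{E}[G_{t_k}\mid\F^W_{t_{k-1}}]=G_{t_{k-1}}$ — one obtains $\mathbf{E}[G_T]=\mathbf{E}[G_{t_n}]=\cdots=\mathbf{E}[G_{t_0}]=1$, whence $G$ is an $\{\F^W_t\}$-martingale.

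The main obstacle is purely bookkeeping at the level of the exponential moments: one must make sure the threshold on $\epsilon$ needed for $\mathbf{E}[e^{\epsilon(\bbeta^z_{t_{k-1}})^2}]<\infty$ (uniformly over the partition points, using the uniform bounds on the Gaussian parameters of $\bbeta^z$ from \eqref{equ:betaclosed}) is compatible with the $h$ returned by Lemma \ref{lemma1}, and that all constants can be chosen independent of $z$ — which Lemma \ref{lemma1} precisely guarantees. Once these uniformities are arranged, the patched Novikov argument is routine. (Alternatively, one may keep the bound in its conditional form and invoke the conditional version of Novikov's theorem on each subinterval, which avoids the $\mathbf{E}[e^{\epsilon(\bbeta^z_{t_{k-1}})^2}]$ estimate altogether.)
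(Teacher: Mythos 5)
Your proposal is correct and is essentially the paper's proof: both recognize $G$ as a nonnegative stochastic exponential (hence a supermartingale with $G_0=1$), use Lemma \ref{lemma1} together with Gaussian integrability of $\bbeta^z_t$ from \eqref{equ:betaclosed} to verify Novikov's condition on subintervals of length at most $h$, and then patch by iterated conditioning along a partition of $[0,T]$ to obtain $\mathbf{E}[G_T]=1$ and conclude via the supermartingale-with-constant-expectation argument. The only cosmetic differences are that the paper fixes the explicit value $\epsilon=\sigma^2/(3\sigma_0^4 T)$ rather than an abstract ``small enough'' $\epsilon$, and that you note (correctly) the alternative of a conditional Novikov criterion, which the paper does not use.
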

\proof  Let $z\in\range(Z)$ be fixed.
There exists a constant $M>0$ such that $f_1^2(t)\le M$ for all $t\in[0,T]$.
Then
\[
\left[f_1(t)\bbeta_t^z+f_2(t)\right]^2\le 2f_1^2(t){\bbeta^z_t}^2+2f_2^2(t)
\leq2M{\bbeta^z_t}^2+2f_2^2(t),\quad t\in[0,T].
\]
Therefore,
\begin{align*}
\mathbf{E}\left[\exp\left(\frac{1}{2}
\int_t^s\left[f_1(u)\bbeta_u^z+f_2(u)\right]^2\rd u\right)\right]
\leq \exp\left(
\int_t^sf_2^2(u)\rd u\right)\mathbf{E}\left[\exp\left(M\int_t^s{\bbeta^z_u}^2\rd u\right)\right].
\end{align*}
Let $\epsilon={\sigma^2\over 3\sigma_0^4T}$. By Lemma \ref{lemma1}, there exists some $h>0$ such that,  for any $t\in[0,T)$, $\beta\in\Rbf$ and $z\in\range(Z)$,
\[
\mathbf{E}\left[\left.\exp\left(M
\int_t^{(t+h)\land T}{\bbeta^z_u}^2\rd u\right)\,\right|\, \bbeta^{z}_t=\beta\right]\leq {4\over3} e^{\epsilon z^2+\epsilon \beta^2}.
\]
By \eqref{equ:betaclosed},
$$\bbeta^z_t\sim N\left(\zeta(t)(\sigma_0^{-2}\beta_0+z\sigma^{-2}t),\zeta^2(t)\sigma^{-2}t\right).$$
Then $2\epsilon\var(\bbeta^z_t)<1$ and hence
\begin{align*}
\mathbf{E}\left[\exp\left(M
\int_t^{(t+h)\land T}{\bbeta^z_u}^2\rd u\right)\right]\leq{4\over3}e^{\epsilon z^2}\Ebf\left[  e^{\epsilon (\bbeta^z_t)^2}\right]<\infty.
\end{align*}
Therefore,
\[
\mathbf{E}\left[\exp\left(\frac{1}{2}
\int_t^{(t+h)\land T}\left[f_1(u)\bbeta_u^z+f_2(u)\right]^2\rd u\right)\right]
<\infty,
\]
i.e.,  Novikov's condition is satisfied.
Then, for any $t\in[0,T)$,
\[
\left\{\exp\left(-\frac{1}{2}
\int_t^s\left[f_1(u)\bbeta_u^z+f_2(u)\right]^2\rd s+\int_t^s\left[f_1(u)\bbeta_u^z+f_2(u)\right]\rd W_u\right)\right\}_{s\in[t,(t+h)\land T]}
\]
is a martingale w.r.t. filtration $\{\F_t^W\}$.

For a partition
\[
0=t_0<t_1<\dots<t_{n-1}<t_n=T
\]
of $[0,T]$ such that $\sup\limits_k\{t_{k+1}-t_k\}<h$. Denote
\[
H_{k,s}=\exp\left(-\frac{1}{2}
\int_{t_k}^s\left[f_1(u)\bbeta_u^z+f_2(u)\right]^2\rd u+\int_{t_k}^s\left[f_1(u)\bbeta_u^z+f_2(u)\right]\rd W_u\right),\  s\in[t_k,t_{k+1}].
\]
Then $\{H_{k,s}\}_{s\in[t_k,t_{k+1}]}$ is a martingale and $\mathbf{E}\left[H_{k,t_{k+1}}\mid \F_{t_k}^W\right]=1$. Therefore,
\begin{equation*}
\begin{split}
	&\mathbf{E}\left[G_T\right]=\mathbf{E}\left[\exp\left(-\frac{1}{2}
	\int_0^T\left[f_1(s)\bbeta_s^z+f_2(s)\right]^2\rd s+\int_0^T\left[f_1(s)\bbeta_s^z+f_2(s)\right]\rd W_s\right)\right]
\\&=\mathbf{E}\left[\prod_{k=0}^{n-1}H_{k,t_{k+1}}\right]
=\mathbf{E}\left[\prod_{k=0}^{n-2}H_{k,t_{k+1}}\mathbf{E}\left[H_{n-1,t_{n}}\mid \F_{t_{n-1}}^W\right]\right]
=\mathbf{E}\left[\prod_{k=0}^{n-2}H_{k,t_{k+1}}\right]=\dots=1.
\end{split}
\end{equation*}
Because $\{G_t\}_{t\in[0,T]}$ is a supermartingale with expectation 1, $\{G_t\}_{t\in[0,T]}$ is a martingale w.r.t. filtration $\{\F_t^W\}$.\qed

\section{Proof of Proposition \ref{prop:admissible}}\label{app:admissible}

For notational simplicity, let
$$\bpi^z_s=\pi(s,X^{z,\pi}_s,\bbeta^z_s),\quad s\in[0,T], z\in\range(Z).$$

Before proving the proposition, we present a lemma.
\begin{lemma}\label{lemma0}
Let $\pi$ satisfies the assumption of Proposition \ref{prop:admissible}. For every  %Let $(t,x,\beta)\in[0,T)\times\Rbf^2$.
$p\geq 2$ and $\epsilon>0$, there exist $h>0$ and $C>0$ such that, $\forall (t,x,\beta)\in[0,T)\times\Rbf^2$ and $z\in\range(Z)$,
	\begin{equation}\label{inequ:lemma0}
		\sup_{s\in[t,(t+h)\land T)}\mathbf{E}\left[\left.|X^{z,\pi}_s|^p\,\right|\,X^{z,\pi}_t=x, \bbeta^{z}_t=\beta\right]
		\leq C\left(|x|^p+ e^{\epsilon z^2+3\sigma_0^4{\zeta(T)}^{-2}\beta^2}\right).
	\end{equation}	
\end{lemma}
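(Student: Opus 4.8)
The plan is to estimate the conditional $p$-th moment of $X^{z,\pi}_s$ directly from the integral form of the inside wealth equation \eqref{equ:xbeta:mu}; the decisive feature is the growth condition \eqref{inequ022}, under which $\pi$ grows only logarithmically in its wealth argument, so that the self-referential term produced by a Gr\"onwall-type estimate can be absorbed, while the dependence on $z$ and $\beta$ is made explicit through the Gaussian law \eqref{equ:betaclosed} of $\bbeta^z$. Fix $(t,x,\beta)\in[0,T)\times\Rbf^2$ and $z\in\range(Z)$, work under $\mathbb{P}[\,\cdot\mid X^{z,\pi}_t=x,\bbeta^z_t=\beta]$, and write, by variation of constants,
\[
X^{z,\pi}_s=e^{r(s-t)}x+\int_t^s e^{r(s-u)}(z-r)\pi(u,X^{z,\pi}_u,\bbeta^z_u)\,\rd u+\int_t^s e^{r(s-u)}\sigma\pi(u,X^{z,\pi}_u,\bbeta^z_u)\,\rd W_u .
\]
To secure a priori integrability I would introduce the localizing times $\tau_n\trieq\inf\{s\ge t:|X^{z,\pi}_s|\ge n\}$, which increase past $T$ almost surely since $X^{z,\pi}$ has continuous paths, and work with $m_n(s)\trieq\Ebf[\sup_{u\in[t,s\wedge\tau_n]}|X^{z,\pi}_u|^p]\le n^p<\infty$; after stopping at $\tau_n$ the stochastic integral is a genuine martingale, because $|\pi|^2\le C(1+(\bbeta^z_u)^2+\log(1+n))$ on $[t,\tau_n]$ and $\bbeta^z$ has finite moments by \eqref{equ:betaclosed}.

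Next I would estimate $m_n$. Using $(a+b+c)^p\le 3^{p-1}(a^p+b^p+c^p)$, the bound $e^{r(s-u)}\le e^{|r|T}$, Jensen's inequality for the Lebesgue drift integral, and the Burkholder--Davis--Gundy inequality for the stopped stochastic integral (rewritten as $e^{rs'}\int_t^{s'}e^{-ru}\sigma\pi\,\mathbf{1}_{u\le\tau_n}\,\rd W_u$), one obtains $m_n(s)\le C|x|^p+C\int_t^s(1+|z|)^p\,\Ebf[|\pi(u,X^{z,\pi}_u,\bbeta^z_u)|^p\mathbf{1}_{u\le\tau_n}]\,\rd u$ for $s\in[t,(t+h)\wedge T)$, $h\in(0,T]$ arbitrary. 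Now \eqref{inequ022} gives $|\pi(u,X^{z,\pi}_u,\bbeta^z_u)|\le C(1+|\bbeta^z_u|+\sqrt{\log(1+|X^{z,\pi}_u|)})$; raising to the $p$-th power, using Young's inequality to split the factor $(1+|z|)^p$ off the logarithmic term, and using $(\log(1+\xi))^p\le C(1+\xi^p)$ for all $\xi\ge0$, the integrand is bounded by $C(1+|z|^{2p}+|\bbeta^z_u|^{2p})+C|X^{z,\pi}_u|^p$, where the coefficient of $|X^{z,\pi}_u|^p$ involves neither $z$ nor $\beta$. Hence
\[
m_n(s)\le C\Big(|x|^p+1+|z|^{2p}+\int_t^s\Ebf[|\bbeta^z_u|^{2p}]\,\rd u\Big)+C\int_t^s m_n(u)\,\rd u ,
\]
with $C$ depending only on $p,T,r,\sigma,\sigma_0$ and the constant in \eqref{inequ022}, and Gr\"onwall's inequality then removes the last term.

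It remains to bound $\Ebf[|\bbeta^z_u|^{2p}]$ and to turn polynomial growth in $z,\beta$ into the exponential bound claimed. By \eqref{equ:betaclosed} and the boundedness of $\zeta$, under the conditioning $\bbeta^z_u$ is Gaussian with mean and variance dominated by affine, respectively quadratic, functions of $(|z|,|\beta|)$, so $\Ebf[|\bbeta^z_u|^{2p}]\le C(1+|z|^{2p}+|\beta|^{2p})$. Since a polynomial is dominated by any positive-coefficient Gaussian exponential, $1+|z|^{2p}\le C_\epsilon e^{\epsilon z^2}$ and $|\beta|^{2p}\le C e^{3\sigma_0^4\zeta(T)^{-2}\beta^2}$, the last constant depending only on $p,\sigma_0,\zeta(T)$; combined with $e^{\epsilon z^2}+e^{3\sigma_0^4\zeta(T)^{-2}\beta^2}\le 2e^{\epsilon z^2+3\sigma_0^4\zeta(T)^{-2}\beta^2}$ this gives $m_n(s)\le C(|x|^p+e^{\epsilon z^2+3\sigma_0^4\zeta(T)^{-2}\beta^2})$ uniformly in $n$. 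Letting $n\to\infty$ by monotone convergence yields the same bound for $\Ebf[\sup_{u\in[t,s]}|X^{z,\pi}_u|^p\mid X^{z,\pi}_t=x,\bbeta^z_t=\beta]$, which dominates the left-hand side of \eqref{inequ:lemma0}.

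The step I expect to be most delicate is closing the Gr\"onwall loop: $\pi$ depends on $X^{z,\pi}$, so a bound on $\Ebf[|X^{z,\pi}_s|^p]$ is obtained only through $\Ebf[|\pi|^p]$, which re-involves $\Ebf[|X^{z,\pi}_u|^p]$ via the $\sqrt{\log(1+|X^{z,\pi}_u|)}$ term. This is harmless since the reappearance is only logarithmic, but one must detach the factor $(1+|z|)^p$ from that term before estimating, so that the coefficient of $|X^{z,\pi}_u|^p$ in the Gr\"onwall inequality stays free of $z$ and $\beta$; otherwise neither $h$ nor the final constant could be chosen uniformly in $z$ and $\beta$.
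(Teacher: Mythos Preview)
Your argument is correct and, in two respects, takes a different route from the paper's. First, the paper bounds $|\pi|^p\le C(e^{\beta^2}+|x|^{p/2})$, which produces a \emph{square-root} feedback $\int(\Ebf[|X|^p])^{1/2}\,\rd u$ and is closed via the generalized Gr\"onwall inequality of Willett--Wong; you instead majorize $(\log(1+\xi))^p\le C(1+\xi^p)$ to obtain a \emph{linear} feedback and close with the ordinary Gr\"onwall lemma, after using Young's inequality to detach the $(1+|z|)^p$ factor so that the Gr\"onwall constant is uniform in $z$. Second, the paper controls the $\bbeta^z$-contribution through the exponential-moment estimate \eqref{inequ:lemma2} of Lemma~\ref{lemma1}, which is precisely where both the need for a small $h$ and the specific constant $3\sigma_0^4\zeta(T)^{-2}$ in the $\beta$-exponent originate; you work only with polynomial moments of $\bbeta^z$ (immediate from \eqref{equ:betaclosed}) and convert to the stated exponential at the very end, thereby bypassing Lemma~\ref{lemma1} altogether---in your argument any $h>0$ would do. Your path is more elementary (no nonlinear Gr\"onwall, no auxiliary exponential-moment lemma) and your explicit localization handles a finiteness issue the paper leaves implicit; the paper's path gives a sharper intermediate estimate and makes the provenance of the constant $3\sigma_0^4\zeta(T)^{-2}$ transparent.
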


\proof From \eqref{equ:x:mu}, we obtain
\begin{equation}\label{equ:xintegral}
	X^{z,\pi}_s
	=e^{r(s-t)}X^{z,\pi}_t+\int_t^se^{r(s-u)}\bpi^z_u(z-r)\rd u+
	\int_t^se^{r(s-u)}\bpi^z_u\sigma\rd W_u.
\end{equation}
Let  $p\ge 2$ be fixed. Then by Jensen's inequality,
\begin{equation*}
	\begin{split}
		\left|{X^{z,\pi}_s\over 3}\right|^p
		%=&|e^{r(s-t)}X^{z,\pi}_t+\int_t^se^{r(s-u)}\bpi^z_u(z-r)\rd u+
		%\int_t^se^{r(s-u)}\bpi^z_u\sigma\rd W_u|
		 \le& {1\over 3}\left(e^{pr(s-t)}|X^{z,\pi}_t|^p+\left|\int_t^se^{r(s-u)}\bpi^z_u(z-r)\rd u\right|^p+\left|\int_t^se^{r(s-u)}\bpi^z_u\sigma\rd W_u\right|^p\right)\\
		\le&{1\over3}\left(e^{pr(s-t)}|X^{z,\pi}_t|^p+ (s-t)^{p-1}\int_t^se^{pr(s-u)}|\bpi^z_u(z-r)|^p\rd u+ e^{prs}\left|\int_t^se^{-ru}\bpi^z_u\sigma\rd W_u\right|^p\right)\\
		\le&{1\over3}e^{prT}\left(|X^{z,\pi}_t|^p+T^{p-1}|(z-r)|^p\int_t^s|\bpi^z_u|^p\rd u+ \sigma^p\left|\int_t^se^{-ru}\bpi^z_u\rd W_u\right|^p\right).
	\end{split}
\end{equation*}
By the Burkholder–Davis–Gundy and the Jensen inequalities, there exists $C_{1,p}>0$ such that, $\forall (t,x,\beta)\in[0,T)\times\Rbf^2$, $z\in\range(Z)$ and $s\in[t,T]$,
\begin{equation*}
	\begin{split}
		& \mathbf{E}\left[\left.\left|\int_t^se^{-ru}\bpi^z_u\rd W_u\right|^p\,\right|\,X^{z,\pi}_t=x,\bbeta^z_t=\beta\right]
		\\\le&  C_{1,p} \mathbf{E}\left[\left.\left(\int_t^s(e^{-ru}\bpi^z_u)^2\rd u\right)^{p\over 2}\,\right|\,X^{z,\pi}_t=x,\bbeta^z_t=\beta\right]
		\\\le & C_{1,p}(s-t)^{{p\over 2} -1} \mathbf{E}\left[\left.\int_t^s|\bpi^z_u|^p\rd u\,\right|\,X^{z,\pi}_t=x,\bbeta^z_t=\beta\right].
	\end{split}
\end{equation*}
By \eqref{inequ022}, there exist $C_{2,p}>0$ and $C_{3,p}>0$ such that, $\forall (t,x,\beta)\in[0,T)\times\Rbf^2$,
\begin{equation}\label{inequ:pi}
|\pi(t,x,\beta)|^p\leq (Ce^{|\beta|}+C|x|^{1\over 2})^p
\leq  C_{2,p}(e^{p|\beta|}+|x|^{p\over 2})
\leq C_{3,p}(e^{\beta^2}+|x|^{p\over 2}).
\end{equation}
Then there exists $C_{4,p}>0$ such that, $\forall (t,x,\beta)\in[0,T)\times\Rbf^2$, $z\in\range(Z)$ and $s\in[t,T]$,
\begin{equation*}
	\begin{split}
		&\mathbf{E}\left[\left.|X^{z,\pi}_s|^p\,\right|\,X^{z,\pi}_t=x,\bbeta^z_t=\beta\right]\\
		\le& C_{4,p}|x|^p+C_{4,p}(|z|^p+1)\left(
		\int_t^s\mathbf{E}\left[e^{{\bbeta^z_u}^2}\mid \bbeta^z_t=\beta\right]\rd u
		+\int_t^s\mathbf{E}\left[|X^{z,\pi}_u|^{p\over 2}\left|\genfrac{}{}{0pt}{}{X^{z,\pi}_t=x}{\bbeta^z_t=\beta}\right.\right]\rd u
		\right)\\
		\le& C_{4,p}|x|^p+C_{4,p}(|z|^p+1)\left(
		\int_t^s\mathbf{E}\left[e^{{\bbeta^z_u}^2}\mid \bbeta^z_t=\beta\right]\rd u
		+\int_t^s\left(\mathbf{E}\left[|X^{z,\pi}_u|^{p}\left|\genfrac{}{}{0pt}{}{X^{z,\pi}_t=x}{\bbeta^z_t=\beta}\right.\right]\right)^{1\over 2}\rd u
		\right)
	\end{split}
\end{equation*}
Now let $\epsilon>0$ be fixed. By Lemma \ref{lemma1}, there exists $h>0$ such that, $\forall (t,\beta)\in[0,T)\times\Rbf$ and $z\in\range(Z)$,
\[\sup_{s\in[t,(t+h)\land T)}\mathbf{E}\left[\left.\exp\left\{{\bbeta^z_s}^2\right\}\,\right|\, \bbeta^{z}_t=\beta\right]\leq \sqrt{2}e^{{\epsilon\over 2}z^2+3\sigma_0^4{\zeta(T)}^{-2}\beta^2}.\]
Then there exists $C_{5,p}>0$ such that, $\forall (t,x,\beta)\in[0,T)\times\Rbf^2$, $z\in\range(Z)$ and $s\in[t,(t+h)\land T)$,
\begin{equation*}
	\begin{split}
		\mathbf{E}\left[\left.|X^{z,\pi}_s|^p\,\right|\,X^{z,\pi}_t=x,\bbeta^z_t=\beta\right]
		\le& C_{5,p}|x|^p+C_{5,p}(|z|^p+1)
		e^{{\epsilon\over 2} z^2+3\sigma_0^4{\zeta(T)}^{-2}\beta^2}
		\\&+C_{5,p}(|z|^p+1)\int_t^s\left(\mathbf{E}\left[\left.|X^{z,\pi}_u|^{p}\,\right|\,X^{z,\pi}_t=x,\bbeta^z_t=\beta\right]\right)^{1\over 2}\rd u.
	\end{split}
\end{equation*}
By the generalized Gronwall inequality in \citet[Theorem 2]{willett1965discrete}, there exists $C_{1,p,\epsilon}>0$ such that, $\forall (t,x,\beta)\in[0,T)\times\Rbf^2$, $z\in\range(Z)$ and $s\in[t,(t+h)\land T)$,
 \begin{equation*}
	\begin{split}
		&\mathbf{E}\left[\left.|X^{z,\pi}_s|^p\,\right|\,X^{z,\pi}_t=x,\bbeta^z_t=\beta\right]
		\\
		\leq&
		\left(\left(C_{5,p}|x|^p+C_{5,p}(|z|^p+1)e^{{\epsilon\over 2}z^2+3\sigma_0^4{\zeta(T)}^{-2}\beta^2}\right)^{1\over 2}
		+{1\over 2}C_{5,p}(|z|^p+1) (s-t)\right)^2
		\\
		\leq& 2C_{5,p}|x|^p+2C_{5,p}(|z|^p+1)e^{{\epsilon\over 2} z^2+3\sigma_0^4{\zeta(T)}^{-2}\beta^2}+
		{1\over 2}C_{5,p}^2(|z|^p+1)^2 T^2
		\\
%		\leq& C_{6,p}|x|^p+C_{6,p}[|z|^{2p}+1]e^{{\epsilon\over 2} z^2+3\sigma_0^4{\zeta(T)}^{-2}\beta^2}
%		\\
%		\leq & C_{6,p}|x|^p+C_{6,p}[1+e^{2p|z|}]e^{{\epsilon\over 2} z^2+3\sigma_0^4{\zeta(T)}^{-2}\beta^2}
%		\\
%		\leq & C_{6,p}|x|^p+C_{1,p,\epsilon}[1+e^{{\epsilon\over 2} z^2}]e^{{\epsilon\over 2} z^2+3\sigma_0^4{\zeta(T)}^{-2}\beta^2}
%		\\
		\leq & 2C_{5,p}|x|^p+C_{1,p,\epsilon}e^{\epsilon z^2+3\sigma_0^4{\zeta(T)}^{-2}\beta^2}.
	\end{split}
\end{equation*}
Therefore,  \eqref{inequ:lemma0} holds.\qed

\paragraph{Proof of Proposition \ref{prop:admissible}.}

First, we show that, for every $(t,x,\beta)\in[0,T)\times\Rbf^2$, $\epsilon>0$ and $\delta>0$, there exist  $\tilde{t}\in(t,T)$ and $C>0$ such that \eqref{inequ01} holds for all $z\in\range(Z)$.

Let $(t,x,\beta)\in[0,T)\times\Rbf^2$, $\epsilon>0$ and $\delta>0$ be fixed.

By \eqref{equ:xintegral}, $\forall s\in[t,T]$ and $z\in\range(Z)$,
\begin{equation}
	\begin{split}
		&e^{-\delta ke^{r(T-s)} X^{z,\pi}_s}
		%=&e^{-\delta ke^{r(T-t)} X^{z,\pi}_t}\exp\left\{-\delta k\int_t^se^{r(s-u)}\bpi^z_u(z-r)\rd u-\delta k \int_t^se^{r(s-u)}\bpi^z_u\sigma\rd W_u\right\}
		\\=&e^{-\delta ke^{r(T-t)} X^{z,\pi}_t}\underbrace{\exp\left\{\int_t^s\left[-\delta k e^{r(T-u)}\bpi^z_u(z-r)+\delta^2 k^2e^{2r(T-u)}{\bpi^z_u}^2\sigma^2\right]\rd u\right\}}_{{G}_s}
		\\&\times\underbrace{\exp\left\{-\int_t^s\delta^2 k^2e^{2r(T-u)}{\bpi^z_u}^2\sigma^2\rd u-\int_t^s\delta ke^{r(T-u)}\bpi^z_u\sigma\rd W_u\right\}}_{{H}_s}.
	\end{split}
\end{equation}
Obviously, $\{{H}_s^2\}_{s\ge t}$ is a non-negative supermartingale w.r.t. $\{\F_s^W\}$. Hence,
$$\mathbf{E}[{H}_s^2\mid\F_t^W]\leq1,\ s\ge t.$$
Therefore, by the Cauchy–Schwarz inequality, $\forall s\in[t,T]$ and $z\in\range(Z)$,
\begin{equation}\label{inequ:inx}
	\begin{split}
		&\mathbf{E}
		\left[\left.e^{-\delta k e^{r(T-s)} X^{z,\pi}_s}\,\right|\,X^{z,\pi}_t=x,\bbeta^z_t=\beta\right]\\
		=&e^{-\delta ke^{r(T-t)} x}\mathbf{E}
		\left[\left.{G}_s{H}_s\,\right|\,X^{z,\pi}_t=x,\bbeta^z_t=\beta\right]\\
		\leq& e^{-\delta ke^{r(T-t)} x} \left(\mathbf{E}
		\left[\left.{G}_s^2\,\right|\,X^{z,\pi}_t=x,\bbeta^z_t=\beta\right]\right)^{1\over 2}\left(\mathbf{E}
		\left[\left.{H}_s^2\,\right|\,X^{z,\pi}_t=x,\bbeta^z_t=\beta\right]\right)^{1\over 2}
		\\
		\le& e^{-\delta ke^{r(T-t)} x} \left(\mathbf{E}
		\left[\left.{G}_s^2\,\right|\,X^{z,\pi}_t=x,\bbeta^z_t=\beta\right]\right)^{1\over 2}.
	\end{split}
\end{equation}
Because the integrand in ${G}_s$ is a quadratic function of $z$ and $\bpi^z_u$, there exist  $C_1>0$ and $C_2>0$ such that, $\forall s\in[t,T]$ and $z\in\range(Z)$,
\begin{equation}\label{inequ:G}
\begin{split}
	{G}_s^2&=\exp\left\{\int_t^s\left[-2\delta k  e^{r(T-u)}\bpi^z_u(z-r)+2\delta^2 k^2e^{2r(T-u)}{\bpi^z_u}^2\sigma^2\right]\rd u\right\}
	\\ &\leq C_1\exp\left\{\int_t^s[\epsilon z^2 +C_2{\bpi^z_u}^2]\rd u
	\right\}
	\\&= C_1 e^{\epsilon(s-t) z^2} \exp\left\{C_2\int_t^s{\bpi^z_u}^2\rd u
	\right\}.
%	\\&\leq C_{3,\epsilon} e^{\epsilon(s-t) z^2} \exp\left\{C_{4,\epsilon}\int_t^s{\bbeta^z_s}^2\rd u
%	\right\}.
\end{split}
\end{equation}
%By \eqref{inequ022}, $\forall (t,x,\beta)\in[0,T)\times\Rbf^2$,
%$$\pi(t,x,\beta)^2\leq 2C^2(1+\beta^2+\log(1+|x|)).$$
Then by \eqref{inequ022} and Jensen's inequality, there exist $C_3, C_4, C_5 >0$ such that,
$\forall s\in[t,T)$ and $z\in\range(Z)$,
\begin{equation}\label{inequ:G2}
	\begin{split}
		{G}_s^2&\leq C_3 e^{\epsilon(s-t) z^2} \exp\left\{C_4\int_t^s[{\bbeta^z_s}^2
		+\log(1+|X^{z,\pi}_u|)]\rd u
		\right\}
		\\&\le  C_3 \left[e^{2\epsilon(s-t) z^2} \exp\left\{2C_4\int_t^s {\bbeta^z_s}^2 \rd u
		\right\}+\exp\left\{2C_4\int_t^s\log(1+|X^{z,\pi}_u|)\rd u\right\}\right]
		\\&\le C_3 \left[e^{2\epsilon(s-t) z^2} \exp\left\{2C_4 \int_t^s {\bbeta^z_u}^2
		\rd u
		\right\}+{1\over s-t}\int_t^s[1+|X^{z,\pi}_u|]^{2C_4 (s-t)}\rd u\right]
		\\&\le C_5 \left[e^{2\epsilon(s-t) z^2} \exp\left\{2C_4 \int_t^s {\bbeta^z_u}^2
		\rd u
		\right\}+{1\over s-t}\int_t^s|X^{z,\pi}_u|^{2C_4 T}\rd u+1\right].
	\end{split}
\end{equation}
By Lemmas \ref{lemma1} and \ref{lemma0},  there exist $\tilde{t}\in\left(t,T\land(t+{1\over 4})\right)$, $C_6>0$ and $C_7>0$ such that, for all $z\in\range(Z)$,
\[
\begin{split}
 \sup_{s\in[t,\tilde{t}]} \mathbf{E}\left[\left.\exp\left\{2C_4 \int_t^s{\bbeta^z_s}^2\rd u
\right\}\,\right|\,\bbeta^z_t=\beta\right]\leq C_6 e^{{1\over 2}\epsilon z^2}
\end{split}
\]
and
\[
	\sup_{s\in[t,\tilde{t}]}\left\{\mathbf{E}\left[\left.|X^{z,\pi}_s|^{2C_4 T}\,\right|\, \bbeta^{z}_t=\beta\right]\right\}\leq C_7 e^{\epsilon z^2}.
\]
A combination of the last two inequalities with \eqref{inequ:inx} and  \eqref{inequ:G2} yields \eqref{inequ01}.

Now we show that, for every $(t,x,\beta)\in[0,T)\times\Rbf^2$, $\epsilon>0$ and $\rho>2$, there exist  $\tilde{t}\in(t,T)$ and $C>0$ such that \eqref{inequ02} holds for all $z\in\range(Z)$.

Let $(t,x,\beta)\in[0,T)\times\Rbf^2$, $\epsilon>0$ and $\rho>2$ be fixed.

By \eqref{inequ:pi}, there exist $C_{1, \rho}>0$ and $C_{2, \rho}>0$ such that, $\forall (t,x,\beta)\in[0,T)\times\Rbf^2$,
\[
|\pi(t,x,\beta)|^\rho\leq C_{1, \rho}\left(e^{\beta^2}
+|x|^{\rho\over 2}\right)\leq C_{2, \rho}\left(1+e^{\beta^2}
+|x|^{\rho}\right).
\]
Then
\[
|\pi(s,X^{z,\pi}_s,\bbeta^{z}_s)|^\rho\leq  C_{2, \rho}\left(1+\exp({\bbeta^z_s}^2)+|X^{z,\pi}_s|^\rho\right).
\]
A combination of  Lemmas \ref{lemma0}  and  \ref{lemma1} yields the existence of $\tilde{t}\in(t,T)$ and $C>0$ such that \eqref{inequ02} holds.
\qed

%{\color{red}
%\section{Proof of Lemma \ref{lma:m4m5}}\label{app:lma:m4m5}
%First, we verify $rm_2(t)+rm_3(t)+m_5(t)=0$. By \eqref{equ:ode}, we have
%\[
%\begin{split}
%	&\sigma^2[rm_2(t)+rm_3(t)+m_5(t)]'\\
%=&r(1+\zeta(t)m_2(t))a_1\left(t,m_2(t),m_3(t)\right) -r(1+\zeta(t)m_2(t))^2\\&+(1+\zeta(t)m_2(t))a_2\left(t,m_2(t),m_3(t)\right)m_5(t)\\
%=&(1+\zeta(t)m_2(t))a_2\left(t,m_2(t),m_3(t)\right)
%\left[r{a_1\left(t,m_2(t),m_3(t)\right) \over a_2\left(t,m_2(t),m_3(t)\right)}-r{(1+\zeta(t)m_2(t)) \over a_2\left(t,m_2(t),m_3(t)\right)}+m_5(t)\right]\\
%=&(1+\zeta(t)m_2(t))a_2\left(t,m_2(t),m_3(t)\right)[rm_2(t)+rm_3(t)+m_5(t)].
%\end{split}
%\]
%Because $rm_2(T)+rm_3(T)+m_5(T)=0$, we obtain $rm_2(t)+rm_3(t)+m_5(t)=0, t\in[0,T]$.
%Similarly,  by \eqref{equ:ode}, we have
%\[\sigma^2[rm_1(t)+rm_2(t)+m_4(t)]'=\zeta(t)[rm_1(t)+rm_2(t)+m_4(t)].\]
%As $rm_1(T)+rm_2(T)+m_4(T)=0$, we also have $rm_1(t)+rm_2(t)+m_4(t)=0, t\in[0,T]$.
%\qed
%}

\section{Proof of Lemma \ref{lma:ansatz}}\label{app:lma:ansatz}
%In the following, we show that $g^z$ given by \eqref{eq:ans:gz}--\eqref{eq:ans:fz} and $\pi^*$ given by \eqref{equ:u*} satisfy \eqref{equ:hjb:beta}--\eqref{equ:hjbg2:beta}.

First, we verify \eqref{equ:m4m5}. By \eqref{equ:ode}, we have
\[
\begin{split}
	&\sigma^2[rm_2(t)+rm_3(t)+m_5(t)]'\\
=&r(1+\zeta(t)m_2(t))a_1\left(t,m_2(t),m_3(t)\right) -r(1+\zeta(t)m_2(t))^2\\&+(1+\zeta(t)m_2(t))a_2\left(t,m_2(t),m_3(t)\right)m_5(t)\\
=&(1+\zeta(t)m_2(t))a_2\left(t,m_2(t),m_3(t)\right)
\left[r{a_1\left(t,m_2(t),m_3(t)\right) \over a_2\left(t,m_2(t),m_3(t)\right)}-r{(1+\zeta(t)m_2(t)) \over a_2\left(t,m_2(t),m_3(t)\right)}+m_5(t)\right]\\
=&(1+\zeta(t)m_2(t))a_2\left(t,m_2(t),m_3(t)\right)[rm_2(t)+rm_3(t)+m_5(t)].
\end{split}
\]
Because $rm_2(T)+rm_3(T)+m_5(T)=0$, we obtain $rm_2(t)+rm_3(t)+m_5(t)=0, t\in[0,T]$.
Similarly,  by \eqref{equ:ode}, we have
\[\sigma^2[rm_1(t)+rm_2(t)+m_4(t)]'=\zeta(t)[rm_1(t)+rm_2(t)+m_4(t)].\]
As $rm_1(T)+rm_2(T)+m_4(T)=0$, we also have $rm_1(t)+rm_2(t)+m_4(t)=0, t\in[0,T]$.

Second, by \eqref{eq:ans:fz} and the terminal condition of \eqref{equ:ode}, we have
$f^z(T,\beta)=0$. Then by \eqref{eq:ans:gz}, we have \eqref{equ:hjbg2:beta}.

Third,  we verify \eqref{equ:hjbg:beta}. Plugging \eqref{eq:ans:gz} into $\Ac^{z,\pi^*}g^z$ yields
\begin{equation}\label{equ:Acg}
	\begin{split}
		\Ac^{z,\pi^*}g^z(t,x,\beta)
		=&g^z(t,x,\beta)\left\{f^z_t-ke^{r(T-t)}\left[\pi^*( z-r)\right]+ {\zeta\over\sigma^2}( z-\beta)f^z_\beta\right.\\
		&\left.+\frac{1}{2}k^2e^{-2r(T-t)}\sigma^2\pi^{*2} +\frac{1}{2}
		{\zeta^2\over\sigma^2}\left[f^z_{\beta\beta}+
		(f^z_\beta)^2\right ]-ke^{-r(T-t)}\zeta\pi^* f^z_\beta\right\}
	\end{split}
\end{equation}
Substituting \eqref{eq:ans:fz} and \eqref{equ:u*} into \eqref{equ:Acg} and sorting by the orders
%(corresponding to the ODE \eqref{equ:ode}))
of $\beta^2$, $z\beta$, $z^2$, $\beta$, $z$ and $1$  yields
\begin{equation*}
\begin{split}
&\Ac^{z,\pi^*}g^z(t,x,\beta)\\
=&\sigma^{-2}g^z(t,x,\beta)\bigg\{\frac{1}{2}\left[\sigma^2m_1^\prime(t)-2 \zeta(t)m_1(t)+ a_1^2(t,m_2(t),m_3(t))\right]\beta^2\\
&+\left[\sigma^2m_2^\prime(t)-(1+\zeta(t)m_2(t))a_1(t,m_2(t),m_3(t)) -\zeta(t)m_2(t)\right]\beta z\\
&+\frac{1}{2}\left[\sigma^2m_3^\prime(t)+2 \zeta(t)m_2(t)+\zeta^{2}(t)m_2^2(t)\right]z^2\\
&+\left[\sigma^2m_4^\prime(t)-\zeta(t)m_4(t)+ a_1(t,m_2(t),m_3(t)) a_2(t,m_2(t),m_3(t))m_5(t)
		+r \zeta(t)m_1(t)\right]\beta\\
&+\left[\sigma^2m_5^\prime(t)-(1+\zeta(t)m_2(t))a_2(t,m_2(t),m_3(t))m_5(t)+r(1+\zeta(t)m_2(t))\right]z\\
		&+\left[\sigma^2m_6^\prime(t)+\frac{1}{2} a_2^2(t,m_2(t),m_3(t))m^2_5(t)-\frac{1}{2}r^2
		+\frac{1}{2}\zeta^{2}(t) m_1(t)+r\zeta(t)m_4(t)\right]
		\bigg\}.
\end{split}
\end{equation*}
Then by \eqref{equ:ode}, we know that \eqref{equ:hjbg:beta} holds.

Finally, we prove \eqref{equ:hjb:beta}. Actually, $\mathbf{E}\left[\phi^\prime\left(g^ Z(t,x,\beta)\right)\Ac^{ Z,\pi} g^ Z(t,x,\beta)\mid \bbeta_t=\beta\right]$
is a quadratic function of $\pi$, where the coefficient of $\pi^2$ in \eqref{equ:hjb:beta} is
\[
\frac{1}{2}\sigma^2k^2e^{2r(T-t)}\mathbf{E}\left[\phi^\prime(g^ Z(t,x,\beta))g^ Z(t,x,\beta) \mid \bbeta_t=\beta\right]<0,
\]
  {since $g^z_{xx}(t,x,\beta)=k^2e^{2r(T-t)}g^z(t,x,\beta)$ by \eqref{eq:ans:gz}.}  Then the supremum in \eqref{equ:hjb:beta} is attained at
\begin{equation}\label{ofc:beta}
	\begin{split}
	\hat{\pi}(t,x,\beta)=&
	-\frac{\mathbf{E}\left[\phi^\prime(g^ Z(t,x,\beta))g_{x}^ Z(t,x,\beta)( Z-r) \mid \bbeta_t=\beta\right]}
	{\mathbf{E}\left[\phi^\prime(g^ Z(t,x,\beta))g^ Z_{xx}(t,x,\beta)\sigma^2 \mid \bbeta_t=\beta\right]}\\
	&-\frac{\mathbf{E}\left[\phi^\prime(g^ Z(t,x,\beta))g^ Z_{x\beta}(t,x,\beta)\zeta(t) \mid \bbeta_t=\beta\right]}
	{\mathbf{E}\left[\phi^\prime(g^ Z(t,x,\beta))g^ Z_{xx}(t,x,\beta)\sigma^2 \mid \bbeta_t=\beta\right]}
	\\=&-\frac{\mathbf{E}\left[\phi^\prime(g^ Z(t,x,\beta))g_{x}^ Z(t,x,\beta) \mid \bbeta_t=\beta\right]}
	{\mathbf{E}\left[\phi^\prime(g^ Z(t,x,\beta))g^ Z_{xx}(t,x,\beta)\sigma^2 \mid \bbeta_t=\beta\right]}( \beta-r)\\
	&-\frac{\mathbf{E}\left[\phi^\prime(g^ Z(t,x,\beta))g_{x}^ Z(t,x,\beta)( Z-\beta) \mid \bbeta_t=\beta\right]}
	{\mathbf{E}\left[\phi^\prime(g^ Z(t,x,\beta))g^ Z_{xx}(t,x,\beta)\sigma^2 \mid \bbeta_t=\beta\right]}\\
	&-\frac{\mathbf{E}\left[\phi^\prime(g^ Z(t,x,\beta))g^ Z_{x\beta}(t,x,\beta)\zeta(t) \mid \bbeta_t=\beta\right]}
	{\mathbf{E}\left[\phi^\prime(g^ Z(t,x,\beta))g^ Z_{xx}(t,x,\beta)\sigma^2 \mid \bbeta_t=\beta\right]}.
	%\\
	%&=-\frac{\int_{\mathbf{R} }\phi^\prime(g^z)g_{x}^z(z-r) p\rd z}{\int_{\mathbf{R} }\phi^\prime(g^z)g^z_{xx}\sigma^2 p\rd z}
	%-\frac{\int_{\mathbf{R} }\phi^\prime(g^z)g_{x\beta}^z\zeta(t) p\rd z}{\int_{\mathbf{R} }\phi^\prime(g^z)g^z_{xx}\sigma^2 p\rd z},
	\end{split}
\end{equation}
Plugging \eqref{eq:ans:gz} into \eqref{ofc:beta} yields
\begin{equation}\label{equ:piAB}
	\hat{\pi}(t,x,\beta)=\frac{e^{-r(T-t)}}{k\sigma^2}\left[(\beta-r)+\underbrace{
		{\Ebf[(Z-\beta)e^{\alpha f^Z(t,\beta)} \mid \bbeta_t=\beta]\over \Ebf[e^{\alpha f^Z(t,\beta)}\mid\bbeta_t=\beta]}
	}_{\pi_A(t,x,\beta)}
	+\underbrace{
		{\zeta(t)\Ebf[e^{\alpha f^Z(t,\beta)}f^Z_\beta(t,\beta) \mid \bbeta_t=\beta]\over \Ebf[e^{\alpha f^Z(t,\beta)}\mid\bbeta_t=\beta]}
	}_{\pi_B(t,x,\beta)}\right].
\end{equation}
Obviously,
$$\pi_A(t,x,\beta) ={\Ebf[Ze^{\alpha f^Z(t,\beta)}\mid\bbeta_t=\beta]\over \Ebf[e^{\alpha f^Z(t,\beta)}\mid\bbeta_t=\beta]}
-\beta.$$
Moreover,  a substitution yields
$$\pi_B(t,x,\beta)={\zeta(t)m_2(t)\Ebf[Ze^{\alpha f^Z(t,\beta)}\mid \bbeta_t=\beta]\over \Ebf[e^{\alpha f^Z(t,\beta)}\mid\bbeta_t=\beta]}+\zeta(t)(m_1(t)\beta+m_4(t)).$$
By $Z|(\bbeta_t=\beta)\sim N(\beta,\zeta(t))$, $\zeta(t)^{-1}-\alpha m_3(t)>0$ and Lemma \ref{lma:exp:normal},
\begin{align*}
&{\Ebf[Ze^{\alpha f^Z(t,\beta)}\mid \bbeta_t=\beta]\over \Ebf[e^{\alpha f^Z(t,\beta)}\mid\bbeta_t=\beta]}\\
=&{\Ebf[Z\exp\left\{\frac{1}{2}\alpha m_3(t)Z^2+\alpha m_2(t)\beta Z+\alpha m_5(t)Z\right\}\mid \bbeta(t)=\beta]\over
\Ebf[\exp\left\{\frac{1}{2}\alpha m_3(t)Z^2+\alpha m_2(t)\beta Z+\alpha m_5(t)Z\right\}\mid \bbeta(t)=\beta]}\\
=&\frac{\zeta(t)^{-1}\beta+\alpha m_2(t)\beta+\alpha m_5(t)}{\zeta(t)^{-1}-\alpha m_3(t)}.
\end{align*}
{
Then
\[
\pi_A(t,x,\beta)=\frac{\alpha m_2(t)+\alpha m_3(t)}{\zeta(t)^{-1}-\alpha m_3(t)}\beta+
\frac{\alpha m_5(t)}{\zeta(t)^{-1}-\alpha m_3(t)}
\]
and
\[
\pi_B(t,x,\beta)=\left[\frac{ m_2(t)+\alpha \zeta(t) m_2^2(t)}{\zeta(t)^{-1}-\alpha m_3(t)}+\zeta(t)m_1(t)\right]\beta+\frac{\alpha\zeta(t)m_2(t) m_5(t)}{\zeta(t)^{-1}-\alpha m_3(t)}+\zeta(t)m_4(t).
\]
By \eqref{equ:m4m5}, we obtain
\[
\pi_A(t,x,\beta)=\frac{\alpha m_2(t)+\alpha m_3(t)}{\zeta(t)^{-1}-\alpha m_3(t)}(\beta-r)
\]
and
\[
\pi_B(t,x,\beta)=\left[\frac{ m_2(t)+\alpha \zeta(t) m_2^2(t)}{\zeta(t)^{-1}-\alpha m_3(t)}+\zeta(t)m_1(t)\right](\beta-r).
\]
}
Substituting it into \eqref{equ:piAB} and comparing $\hat{\pi}$ with \eqref{equ:u*} yields $\hat{\pi}=\pi^*$. Then we have
\begin{align*}
&\sup_{\pi\in \Pi}\mathbf{E}\left[\phi^\prime\left(g^ Z(t,x,\beta)\right)\Ac^{ Z,\pi} g^ Z(t,x,\beta)\mid \bbeta_t=\beta\right]\\
=&\mathbf{E}\left[\phi^\prime\left(g^ Z(t,x,\beta)\right)\Ac^{ Z,\hat{\pi}} g^ Z(t,x,\beta)\mid \bbeta_t=\beta\right]\\
=&\mathbf{E}\left[\phi^\prime\left(g^ Z(t,x,\beta)\right)\Ac^{ Z,\pi^*} g^ Z(t,x,\beta)\mid \bbeta_t=\beta\right]\\
=&0
\end{align*}
and \eqref{equ:hjb:beta} holds. \qed

\section{Proof of Proposition \ref{prop:m2m3}}\label{app:prop:m2m3}

Let $$\hat{m}_2(t)=\zeta(t)m_2(t).$$ Obviously, $\zeta^\prime(t)=-\sigma^{-2}\zeta^2(t)$. Then  \eqref{equ:ode:m2m3} is equivalent to
\begin{align}
&\sigma^2\hat{m}_2^\prime(t)=\zeta(t) [\hat{m}_2(t)+1] a_1\left(t,m_2(t),m_3(t)\right),\ \hat{m}_2(T)=0, \label{equ:ode:hm2}\\
&\sigma^2 m_3^\prime(t)=-2\hat{m}_2(t)-\hat{m}_2^2(t),\ m_3(T)=0.\label{equ:ode:hm3}
\end{align}

We first consider the case of $\alpha=0$. In this case, \eqref{equ:ode:hm2} reads
\begin{equation}\label{eq:hatm2:sigze}
\hat{m}_2^\prime(t)=\sigma^{-2}\zeta(t)[\hat{m}_2(t)+1]^2,\ \hat{m}_2(T)=0.
\end{equation}
We can see that\footnote{\label{footnote:hatm2} Suppose on the contrary that $\hat m_2(t)\le -1$ for some $t\in[0,T)$. Let
$t_0=\sup\{t\in[0,T)\mid \hat m_2(t)\le-1\}$. Then $t_0<T$, $\hat m_2(t_0)=-1$ and \eqref{eq:d:hatm2:0} holds on $(t_0,T)$.
Therefore,  \eqref{eq:hatm2:0} holds on $(t_0,T]$,
 which implies $-1=\hat m_2(t_0)=\frac{1}{1+\log(\sigma^2+\sigma_0^2T)-\log(\sigma^2+\sigma_0^2 t_0)}-1>-1$, a contradiction.
}
$\hat m_2(t)>-1$ for all $t\in[0,T)$ and hence
\begin{equation}\label{eq:d:hatm2:0}
-\rd {1\over \hat m_2(t)+1}={\zeta(t)\over\sigma^2}
\end{equation}
holds on $[0,T)$.
Therefore,
\begin{equation}\label{eq:hatm2:0}
\hat{m}_2(t)=\frac{1}{1+\log(\sigma^2+T\sigma_0^2)-\log(\sigma^2+t\sigma_0^2)}-1={1\over 1+\log{\zeta(t)\over\zeta(T)}}-1
\end{equation}
for all $t\in[0,T]$. Obviously,
$$-1<{1\over 1+\log(\sigma^2+T\sigma_0^2)-\log\sigma^2}-1\le\hat m_2(t)<0\quad\text{for all }t\in[0,T).$$
Then by \eqref{equ:ode:hm3}--\eqref{eq:hatm2:sigze},
\begin{align*}
m_3(t)&=\sigma^{-2}\int_t^T\left[2\hat{m}_2(s)+\hat{m}_2^2(s)\right]\rd s\\
&=\int_t^T\zeta(s)^{-1}\rd\hat m_2(s)-\sigma^{-2}(T-t)\\
&=\int_t^T\zeta(s)^{-1}\rd \frac{1}{1+\log{\zeta(s)\over\zeta(T)}}-{T-t\over\sigma^2}\\
&=\int_{\log{\zeta(t)\over\zeta(T)}}^0\zeta(T)^{-1} e^{-x}d{1\over 1+x}-{T-t\over\sigma^2}\\
&=\zeta(T)^{-1}\int^{\log{\zeta(t)\over\zeta(T)}}_0 e^{-x}(1+x)^{-2}dx-{T-t\over\sigma^2}.
%&\xlongequal{x=\log(\sigma^2+s\sigma_0^2)}
%\int_{\log(\sigma^2+t\sigma_0^2)}^{\log(\sigma^2+T\sigma_0^2)}
%{e^x\over \sigma^2\sigma_0^2}\rd \frac{1}{1+\log(\sigma^2+T\sigma_0^2)-x}-{T-t\over\sigma^2}
%\\&\xlongequal{y=x-1-\log(\sigma^2+T\sigma_0^2)}
%\int^{-1}_{-1+\log {\sigma^2+t\sigma_0^2 \over \sigma^2+T\sigma_0^2}}
%{\sigma^2+T\sigma_0^2 \over \sigma^2\sigma^2_0}e^{y+1}y^{-2}\rd y-{T-t\over\sigma^2}
%\\&\xlongequal{x=-y}
%-{\sigma^2+T\sigma_0^2 \over \sigma^2\sigma^2_0}\int_1^{1+\log {\sigma^2+T\sigma_0^2 \over \sigma^2+t\sigma_0^2}}e^{-x+1}x^{-2}\rd x-{T-t\over\sigma^2}.
\end{align*}
Thus, assertion (a) is proved.

Now we consider the case of $\alpha\ne0$. In this case,  let
 $$\hat{m}_3(t)=\zeta(t)^{-1}-\alpha m_3(t), \quad t\in[0,T].$$
Then  \eqref{equ:ode:hm2}--\eqref{equ:ode:hm3} reads
\begin{align}
	&\hat{m}_2^\prime(t)=\sigma^{-2}\frac{[1+\hat{m}_2(t)]^2[1+\!\alpha
		\hat{m}_2(t)]}{\hat{m}_3(t)},\ \hat m_2(T)=0,\label{equ:hm2n}  \\
	&\hat{m}_3^\prime(t)=\sigma^{-2}[1+2\alpha\hat{m}_2(t)
	+\alpha\hat{m}_2^2(t)], \ \hat m_3(T)=\zeta(T).\label{equ:hm4n}
\end{align}
%Then $\hat m_2^\prime(t)>0$ for all $t\in[0,T)$, by \eqref{equ:hm2n}.
Combining \eqref{equ:hm2n}--\eqref{equ:hm4n}, we have\footnote{Similarly to footnote \ref{footnote:hatm2}, we can show, {for $\alpha>\alpha^*$},
$${[1+\hat{m}_2(t)]^2[1+\!\alpha
		\hat{m}_2(t)]}>0,\quad t\in[0,T).$$}
\begin{equation}\label{m2m4n}
	\frac{1+2\alpha\hat{m}_2(t)
		+\alpha\hat{m}_2^2(t)}{[1+\hat{m}_2(t)]^2[1+\!\alpha
		\hat{m}_2(t)]}\hat{m}_2^\prime(t)=\frac{\hat{m}_3^\prime(t)}{\hat{m}_3(t)}.
\end{equation}

In the case of $\alpha=1$, \eqref{m2m4n} reads
\[
\frac{\hat{m}_2^\prime(t)}{1+\hat{m}_2(t)}=\frac{\hat{m}^\prime_3(t)}{\hat{m}_3(t)}.
\]
Integrating both sides yields
$$\hat{m}_3(t)=\hat{m}_3(T)[1+\hat{m}_2(t)].$$
Substituting it into \eqref{equ:hm2n} with $\alpha=1$, we have
\[
\hat{m}^\prime_2(t)=\sigma^{-2}\zeta(T)[1+\hat{m}_2(t)]^2,~\hat{m}_2(T)=0,
\]
which admits a unique solution
$$\hat{m}_2(t)=\frac{1}{1+\sigma^{-2}\zeta(T)(T-t)}-1=
{\sigma^2+T\sigma_0^2\over\sigma^2+T\sigma_0^2+\sigma_0^2(T-t)}
-1.$$
Meanwhile, we have
\begin{align*}
	\hat{m}_3(t)=\frac{\zeta(T)^{-1}}{1+\sigma^{-2}\zeta(T)(T-t)}.
	%={\sigma^{-2}\sigma_0^{-2}[\sigma^2+T\sigma_0^2]^2\over\sigma^2+T\sigma_0^2+\sigma_0^2(T-t)}.
	\end{align*}
Obviously,
$$-1<-{T\sigma_0^2\over\sigma^2+2T\sigma_0^2}\le\hat m_2(t)<0\quad\text{for all }\in[0,T).$$
%Therefore,
%\begin{equation*}
%	\begin{cases}
%		&m_2(t)=\frac{\zeta(t)}{1+\sigma^{-2}\zeta(T)(T-t)}-\zeta(t),\\
%		&m_3(t)=\zeta(t)-\frac{\zeta(t)}{1+\sigma^{-2}\zeta(T)(T-t)}.
%	\end{cases}
%\end{equation*}
Thus, assertion (b) is proved.

Now we consider the general case of $\alpha\notin\{0,1\}$.
Decomposing the left-hand side of \eqref{m2m4n}  leads to
\[
\left\{\frac{1}{[1+\hat{m}_2(t)]^2}
+\frac{\alpha}{\alpha-1}\frac{1}{[1+\hat{m}_2(t)]}
-\frac{\alpha}{\alpha-1}\frac{1}{[1+\!\alpha
	\hat{m}_2(t)]}\right\}\hat{m}^\prime_2(t)=\frac{\hat{m}^\prime_3(t)}{\hat{m}_3(t)}.
\]
Integrating the previous equation from $t$ to $T$ yields
\begin{equation}\label{m2m4}
	\hat{m}_3(t)=\hat{m}_3(T)e^{\frac{\hat{m}_2(t)}{1+\hat{m}_2(t)}}\left[1+\hat{m}_2(t)\right]^{\frac{\alpha}{\alpha-1}}\left[1+\!\alpha
	\hat{m}_2(t)\right]^{-\frac{1}{\alpha-1}}.
\end{equation}
%where $\hat{m}_3(T)=\zeta(t)$.
Then, substituting \eqref{m2m4} into \eqref{equ:hm2n}, we have
\begin{equation}\label{equ:hm2nn}
	\hat{m}^\prime_2(t)=\sigma^{-2}\zeta(T)e^{-\frac{\hat{m}_2(t)}{1+\hat{m}_2(t)}}\left[1+\hat{m}_2(t)\right]^{\frac{\alpha-2}{\alpha-1}}\left[1+\!\alpha
	\hat{m}_2(t)\right]^{\frac{\alpha}{\alpha-1}},~\hat{m}_2(T)=0.
\end{equation}
which is an autonomous ODE of the first order.
%To show the existence and uniqueness of $\hat{m}_2(t)$ given by \eqref{equ:hm2nn}, we first denote
Let
$$\bar{m}_2(t)=-\hat{m}_2(T-t),\quad t\in[0,T].$$
Then \eqref{equ:hm2nn} reads
\begin{equation}\label{equ:bm2}
	%\begin{split}
	\bar{m}_2^\prime(t)= \sigma^{-2}\zeta(T) \psi(\bar m_2(t)),
	%&=\sigma^{-2}\zeta(T)e^{-\frac{\bar{m}_2(t)}{1+\bar{m}_2(t)}}\left[1+\bar{m}_2(t)\right]^{\frac{\alpha-2}{\alpha-1}}\left[1+\alpha
	%\bar{m}_2(t)\right]^{\frac{\alpha}{\alpha-1}}
	%\\&
	%=\sigma^{-2}\zeta(T)e^{\frac{\bar{m}_2(t)}{1-\bar{m}_2(t)}}\left(1-\bar{m}_2(t)\right)^{\frac{\alpha-2}{\alpha-1}}\left(1-\alpha
	%\bar{m}_2(t)\right)^{\frac{\alpha}{\alpha-1}},
	\quad \bar{m}_2(0)=0, \, t\in[0,T).
	%\end{split}
\end{equation}
where function $\psi$ is given by
\begin{align*}
\psi(x)=e^{\frac{x}{1-x}}\left(1-x\right)^{\frac{\alpha-2}{\alpha-1}}\left(1-\alpha
x\right)^{\frac{\alpha}{\alpha-1}}, \quad x\in\left[0,\frac{1}{\alpha\vee 1}\right).
\end{align*}
Let\begin{align*}
\Psi(x)=\int^x_0\frac{1}{\psi(s)}\rd s, \quad x\in\left[0,\frac{1}{\alpha\vee 1}\right].
\end{align*}
Obviously, $\psi(x)$ is continuous and positive on $\left[0,\frac{1}{\alpha\vee 1}\right)$.
Then $\Psi$ is strictly increasing on $\left[0,\frac{1}{\alpha\vee 1}\right]$ and continuously differentiable on $\left(0,\frac{1}{\alpha\vee 1}\right)$.

Consider the case when $\alpha>1$. In this case,
\begin{align*}
\Psi\left({1\over\alpha}\right)%=&-\int_{-{1\over\alpha}}^0\frac{1}{\psi(s)}\rd s\\
&=
		\int^{\frac{1}{\alpha}}_0e^{\frac{-s}{1-s}}\left(1-s\right)^{-\frac{\alpha-2}{\alpha-1}}\left(1-\!\alpha
		s\right)^{-\frac{\alpha}{\alpha-1}}
		\rd s	 \\
		&\geq C\int^{\frac{1}{\alpha}}_0\left(1-\!\alpha
		s\right)^{-\frac{\alpha}{\alpha-1}}
		\rd s=+\infty,
\end{align*}
where $C>0$ is the minimum of function $e^{\frac{-s}{1-s}}\left(1-s\right)^{-\frac{\alpha-2}{\alpha-1}}$ on  $[0,\frac{1}{\alpha}]$.
Therefore, when $\alpha>1$, $$\bar m_2(t)=\Psi^{-1}(\sigma^{-2}\zeta(T)t), \quad t\in[0,T],$$
is the unique solution to \eqref{equ:bm2}.\footnote{The uniqueness is clear. Actually, suppose that $n(t)$ is another solution to \eqref{equ:bm2}. Then the chain rule implies that
${\rd \Psi(n(t))\over \rd t }=\Psi^\prime(n(t))n^\prime(t)={n^\prime(t)\over \psi(n(t))}=\sigma^{-2}\zeta(T)$,
which implies that $\Psi(n(t))=\sigma^{-2}\zeta(T)t+C$ for all $t\in[0,T]$ and some constant $C$. Then by $\Psi(0)=0$, we have $C=0$ and hence $n(t)=\Psi^{-1}(\sigma^{-2}\zeta(T)t)$.
}
Consequently,
$$\hat m_2(t)=-\Psi^{-1}\left((\sigma^{-2}\zeta(T)(T-t)\right),\quad t\in[0,T],$$
is the unique solution to \eqref{equ:hm2nn}. Moreover, it is easy to see that
$$0>\hat m_2(t)>-\Psi^{-1}(\sigma^{-2}\zeta(T)T)>-{1\over\alpha},\quad t\in[0,T).$$

Consider the case when $\alpha<1$.  Let
\[
\begin{split}
	T_\alpha\trieq\sigma^2{\zeta(T)}^{-1}\Psi(1)&=
	\int^{1}_0\frac{\sigma^2}{\zeta(T)}e^{\frac{-s}{1-s}}\left(1-s\right)^{-\frac{\alpha-2}{\alpha-1}}\left(1-\!\alpha
	s\right)^{-\frac{\alpha}{\alpha-1}}
	\rd s	 \\
	&=\left(T+\sigma_0^{-2}\sigma^2\right)\int^{1}_0e^{\frac{-s}{1-s}}\left(1-s\right)^{-2}\left(\frac{1-\!\alpha
		s}{1-s}\right)^{-\frac{\alpha}{\alpha-1}}
	\rd s\\
	&=\left(T+\sigma_0^{-2}\sigma^2\right)\int_0^\infty e^{-x}(1+(1-\alpha)x)^{\alpha\over 1-\alpha}\rd x.
	\end{split}
\]

In the case of $\alpha\in(0,1)$, we have
\[
	T_\alpha\geq \left(T+\sigma_0^{-2}\sigma^2\right)\int_0^\infty e^{-x}\rd x
	=T+\sigma_0^{-2}\sigma^2.
\]
Therefore,
$$\bar m_2(t)=\Psi^{-1}(\sigma^{-2}\zeta(T)t),\quad t\in[0,T],$$ is the unique solution to \eqref{equ:bm2}. Consequently,
$$\hat m_2(t)=-\Psi^{-1}\left((\sigma^{-2}\zeta(T)(T-t)\right),\quad t\in[0,T],$$
is the unique solution to \eqref{equ:hm2nn}. Moreover, it is easy to see that
$$0>\hat m_2(t)>-\Psi^{-1}(\sigma^{-2}\zeta(T)T)>-1,\quad t\in[0,T).$$

Now we consider the case when $\alpha<0$.  For every $\alpha\le0$, let
$$f_\alpha(x)=e^{-x}(1+(1-\alpha)x)^{\alpha\over 1-\alpha},\quad x\in(0,\infty).$$
Then
\[
	T_\alpha=\left(T+\sigma_0^{-2}\sigma^2\right)\int_0^\infty f_\alpha(x)\rd x.
\]
For every $x\in(0,\infty)$,  $\{f_\alpha(x)\}$ is continuous and strictly increasing w.r.t. $\alpha$ on $(-\infty,0]$.
Obviously,  $T_0=T+\sigma_0^{-2}\sigma^2$.
Then by the  monotone convergence theorem, $T_\alpha$ is continuous and strictly increasing w.r.t. $\alpha$ on $(-\infty,0]$. Moreover,
$$\lim_{\alpha\rightarrow -\infty}T_\alpha
=\left(T+\sigma_0^{-2}\sigma^2\right)\lim_{\alpha\rightarrow -\infty}\int_0^\infty f_\alpha(x)\rd x
=\left(T+\sigma_0^{-2}\sigma^2\right)\int_0^\infty \lim_{\alpha\rightarrow -\infty}f_\alpha(x)\rd x
=0.
$$
Then there exists a unique $\alpha^*\in(-\infty,0)$ such that $T_{\alpha^*}=T$, i.e., equation \eqref{equ:alpha*} has a unique solution $\alpha^*\in(-\infty,0)$.
Obviously,
\[
\begin{cases}
	T_\alpha>T \ \text{ if } 0>\alpha>\alpha^*,\\
	T_\alpha<T \ \text{ if }  \alpha<\alpha^*.
\end{cases}
\]
Therefore, if $\alpha\in(\alpha^*,0)$, then
$$\bar m_2(t)=\Psi^{-1}(\sigma^{-2}\zeta(T)t),\quad t\in[0,T],$$
is the unique solution to \eqref{equ:bm2}.
Consequently,
$$\hat m_2(t)=-\Psi^{-1}(\sigma^{-2}\zeta(T)(T-t)),\quad t\in[0,T],$$
is the unique solution to \eqref{equ:hm2nn}. Moreover, it is easy to see that
$$0>\hat m_2(t)>-\Psi^{-1}(\sigma^{-2}\zeta(T)T)>-1,\quad t\in[0,T).$$
Thus, assertion (c) is proved.

Finally, from the above discussion, we have that, for every $\alpha\in(\alpha^*,\infty)$,
$$\hat m_2(t)\in\left(-{1\over\alpha\vee 1},0\right)\quad \text{for all }t\in[0,T).$$
Then by \eqref{equ:ode:hm3}, $m_3^\prime(t)>0$ for all $t\in[0,T)$ and hence ${m}_3(t)<0$ for all $t\in[0,T)$.
Furthermore, by \eqref{m2m4}, we have $\hat{m}_3(t)>0$, i.e., $\zeta(t)^{-1}-\alpha m_3(t)>0$, for all $t\in[0,T]$.
Thus, the proposition is proved.\qed
%
%\
%
%Summarizing the above statements,  \eqref{equ:bm2} exists a unique
%solution $\bar{m}_2(t)=\Psi^{-1}(t)$ on  $[0,T_{\alpha})$.  We also have \[x_\alpha<\bar{m}_2(t)\leq0,~t\in[0,T_{\alpha}).\] Besides,
%\[
%\lim_{t\rightarrow T_\alpha} \bar{m}_2(t)=x_\alpha.
%\]
% By \eqref{m2m4}, \eqref{equ:hm4n} also exists a unique solution $\hat{m}_3(t)$ on  $[0,T_{\alpha})$ and
% \[
% \lim_{t\rightarrow T_\alpha} \hat{m}_3(t)=0.
% \]
% Moreover,
%  \[
% \lim_{t\rightarrow T_\alpha} a_1(t,m_2(t),m_3(t))=+\infty.
% \]
%
%Then \eqref{equ:bm2} exists a unique solution on  $[0,T]$ if and only if $T_\alpha> T$, i.e., $\alpha> \alpha^*$.  Because  $\hat{m}_2(t)=\bar{m}_2(T-t)$, if $\alpha>\alpha^*$, \eqref{equ:hm2nn} also exists a unique solution $\hat{m}_2(t)=\Psi^{-1}(T-t)$ on  $[0,T]$. Then, by \eqref{m2m4}, we can also obtain a closed form solution of $\hat{m}_3(t)$ on $[0,T]$, similar for $m_3(t)$.
%Finally, a substitution leads to the conclusion of the lemma.

\section{Proof of Theorem \ref{thm:result}}\label{app:thm:result}

%{\color{red} update the proof since $m_4$ and $m_5$ are not in the new formula of $\pi^*$.}

Obvious,  $\pi^*$ is admissible, by Proposition \ref{prop:admissible}.
 Condition (a) of Theorem \ref{thm} is guaranteed by Lemma \ref{lma:ansatz} and Proposition \ref{prop:m2m3}.
Then we prove that $\{g^z\}_{z\in\range(Z)}$ and $\pi^*$ satisfy conditions (b)--(c) of Theorem \ref{thm}.
The proof is divided into the following six steps.

\paragraph{Step 1.} Let $z\in\range(Z)$ be fixed. We show $\left\{g^z(t,X^{z,\pi^*}_t,\bbeta^{z}_t)\right\}_{t\in[0,T]}$ is a martingale w.r.t. filtration $\{\mathcal{F}^W_t\}$.
Actually,
\eqref{equ:u*} can be rewritten as
\begin{equation}\label{equ:pi2}
	\pi^*(t,x,\beta)={e^{-r(T-t)} \over k\sigma^2}A(t)\left(\beta-r\right),
\end{equation}
where
\[
%\begin{cases}
A(t)=a_1(t,m_2(t),m_3(t))+\zeta(t)m_1(t).%\\ %B(t)=a_2(t,m_2(t),m_3(t))m_5(t)+\zeta(t)m_4(t)-r.
%\end{cases}
\]
$A(t)$ is bounded and continuous on $[0,T]$.  Applying It\^{o}'s formula and using \eqref{equ:hjbg},
\begin{equation*}\label{equ:dg}
	\begin{split}
		&\rd g^z\left(t,X^{z,\pi^*}_t,\bbeta^{z}_t\right)\\
		=&g^z_x(t,X^{z,\pi^*}_t,\bbeta^z_t)
		\pi^*(t)\sigma \mathrm{d}W_t+g_\beta^z(t,X^{z,\pi^*}_t,\bbeta^z_t) \sigma^{-1}\zeta(t)\mathrm{d}W_t
		\\=&g^z(t,X^{z,\pi^*}_t,\bbeta^{z}_t)
		\left[-ke^{r(T-t)}\pi^*(t)\sigma+\sigma^{-1}\zeta(t)
		\left(m_1(t)\bbeta_t^z+m_2(t)z+m_4(t)\right)
		\right]\mathrm{d}W_t
		\\=&g^z(t,X^{z,\pi^*}_t,\bbeta^{z}_t)
		\sigma^{-1}\left[\left(-A(t)+\zeta(t)m_1(t)\right)\bbeta_t^z+\left(-rA(t)+\zeta(t)m_2(t)z+\zeta(t)m_4(t)\right)
		\right]\mathrm{d}W_t.
	\end{split}
\end{equation*}
Because $\left(-A(t)+\zeta(t)m_1(t)\right)$ and $\left(-rA(t)+\zeta(t)m_2(t)z+\zeta(t)m_4(t)\right)$ are bounded on $[0,T]$, using Lemma \ref{lemma2},
$\left\{g^z(t,X^{z,\pi^*}_t,\bbeta^{z}_t)\right\}_{t\in[0,T]}$ is a martingale w.r.t. filtration $\{\mathcal{F}^W_t\}$.

\paragraph{Step 2.}
For notational simplicity, let
$$\bpi^z_u= \pi(u,X^{z,\pi}_u,\bbeta^z_u)\quad\text{ and }\quad b(t)=ke^{r(T-t)}.$$
 Let  $(t,x,\beta)\in[0,T)\times\Rbf^2$ and $\pi\in\Pi$ be fixed.
We show that there exists some $\tilde t\in(t,T)$ such that, for all $ z\in\range(Z)$, under the conditional probability $\mathbb{P}[\,\cdot\mid X^{z,\pi}_t=x,\bbeta^z_t=\beta]$,
	\begin{equation*}
	\begin{split}
	\left\{\int_t^sg^z_x(u,X^{z,\pi}_u,\bbeta^z_u) \bpi^z_u \mathrm{d}W_u\right\}_{s\in[t,\tilde t]}
	\text{ and }  \left\{\int_t^sg^z_\beta(u,X^{z,\pi}_u,\bbeta^z_u)\zeta(u)\mathrm{d}W_u\right\}_{s\in[t,\tilde t]}
	\end{split}
\end{equation*} are martingales w.r.t. filtration $\{\F_s^W\}$.
%It suffices to show that, for every $t\in[0,T)$,  there exists some $h\in(0,T-h)$ such that
%	\begin{equation*}
%	\begin{split}
%	\left\{\int_0^sg^z_x(u,X^{z,\pi}_u,\bbeta^z_u) \bpi^z_u)\mathrm{d}W_u\right\}_{s\in[t,t+h)}\text{ and }  \left\{\int_0^sg^z_\beta(u,X^{z,\pi}_u,\bbeta^z_u)\zeta(u)\mathrm{d}W_u\right\}_{s\in[t,t+h)}
%	\end{split}
%\end{equation*} are martingales w.r.t. filtration $\{\F_s^W\}$.
To this end,  we show that there exists some $\tilde t\in(t,T)$ such that, for any $z\in\range(Z)$,
\begin{equation*}
	\begin{split}
	&\mathbf{E}\left[\left.\int_t^{\tilde t}|g^z_x(u,X^{z,\pi}_u,\bbeta^z_u) \bpi^z_u|^2\mathrm{d}u\,\right|\, X^{z,\pi}_t=x,\bbeta^z_t=\beta\right]<\infty\\
	&\text{and } \mathbf{E}\left[\left.\int_t^{\tilde t} |g^z_\beta(u,X^{z,\pi}_u,\bbeta^z_u)\zeta(u)|^2\mathrm{d}u\,\right|\, X^{z,\pi}_t=x,\bbeta^z_t=\beta\right]<\infty.
	\end{split}
\end{equation*}
%Then we only need to ensure that there exists small enough $h>0$ such that
%\[
%\begin{split}
%	\sup_{s\in[t,t+h]}\left\{\mathbf{E}\left[\left. |g^z_x(s,X^{z,\pi}_s,\bbeta^z_s)\pi(s) \sigma|^2\,\right|\, X^{z,\pi}_t=x, \bbeta^z_t=\beta\right]\right\}<\infty,\\
%	\sup_{s\in[t,t+h]}\left\{\mathbf{E}\left[\left. |g^z_\beta(s,X^{z,\pi}_s,\bbeta^z_s)\zeta(s)\sigma^{-1}|^2\,\right|\, X^{z,\pi}_t=x, \bbeta^z_t=\beta\right]\right\}<\infty.
%\end{split}
%\]
%Denote $$b(t)=ke^{r(T-t)}.$$
By \eqref{eq:ans:gz}--\eqref{eq:ans:fz} and $m_3(s)<0$,
for any $\epsilon>0$, there exist $C_{1,\epsilon}>0$ and $C_{2,\epsilon}>0$ such that for any $z\in\range(Z)$ and $s\in[t,T]$,
\begin{align*}
|g^z_x(s,X^{z,\pi}_s,\bbeta^z_s)\bpi^z_s|
&= b(s)|g^z(s,X^{z,\pi}_s,\bbeta^z_s)\bpi^z_s|
\\
&\leq C_{1,\epsilon}\exp\left(-b(s)X^{z,\pi}_s+C_{2,\epsilon}{\bbeta^z_s}^2+\epsilon z^2\right)|\bpi^z_s|
\end{align*}
and
\begin{align*}
|g^z_\beta(s,X^{z,\pi}_s,\bbeta^z_s)\zeta(s)|
&=\zeta(s)\left[m_1(s)\bbeta^z_s+m_2(s)z+m_4(s)\right]|g^z(s,X^{z,\pi}_s,\bbeta^z_s)|
\\&
\leq C_{1,\epsilon}\exp\left(-b(s)X^{z,\pi}_s+C_{2,\epsilon}{\bbeta^z_s}^2+\epsilon z^2\right).
\end{align*}
Therefore,  we only need to show that, for any constant $C>0$, % and $(t,x,\beta)\in[0,T)\times\Rbf^2$,
there exists some $\tilde t\in(t,T)$ such that
$$\sup_{s\in[t,\tilde t]}\mathbf{E}\left[\left.\exp\left(-2b(s)X^{z,\pi}_s+C{\bbeta^z_s}^2\right)|\bpi^z_s|^{\rho}\,\right|\, X^{z,\pi}_t=x,\bbeta^z_t=\beta\right]<\infty, \ \rho=0,2.$$
%By \eqref{inequ02}, for any $q>0$, there exist constants $C_{1,q}$, $C_{2,q}$ such that
%\begin{equation}\label{inequ:pi}
%|\pi(t,x,\beta)|^q\leq C^q(1+|\beta|^p)^q\leq C^qC_q(1+|\beta|^{pq})
%\leq C_{1,q}(1+e^{pq|\beta|})\leq 2C_{1,q}e^{pq|\beta|}
%\leq 2C_{1,q}e^{pq\beta^2+pq}
%= C_{2,q}e^{pq\beta^2},
%\end{equation}
%where  $C_q=2^{q-1}$ if $q>1$ and $C_q=1$ if $q\leq1$.
Actually, for any $\delta>2$, by H\"{o}lder's inequality, for $\rho=0,2$,
\begin{align}
&\mathbf{E}\left[\left.\exp\left(-2b(s)X^{z,\pi}_s+C{\bbeta^z_s}^2\right)|\bpi^z_s|^{\rho}\,\right|\,X^{z,\pi}_t=x, \bbeta^z_t=\beta\right]\nonumber\\
\leq&\left(\left.\mathbf{E}\left[\exp\left(-\delta b(s)X^{z,\pi}_s\right)\,\right|\,X^{z,\pi}_t=x, \bbeta^z_t=\beta\right]\right)^{\frac{2}{\delta}}\nonumber\\
&\times\left(\mathbf{E}
\left[\left.\exp\left({C\delta\over\delta-2}{\bbeta^z_s}^2\right)|\bpi^z_s|^{\frac{\rho\delta}{\delta-2}}\,\right|\,X^{z,\pi}_t=x, \bbeta^z_t=\beta\right]\right)^{1-\frac{2}{\delta}}\label{inequ}
\\ \leq&\left(\mathbf{E}\left[\left.\exp\left(-\delta b(s)X^{z,\pi}_s\right)\,\right|\,X^{z,\pi}_t=x, \bbeta^z_t=\beta\right]\right)^{\frac{2}{\delta}}
\nonumber\\
&\times\left(\mathbf{E}\left[\left.\exp\left({2C\delta\over\delta-2}{\bbeta^z_s}^2\right)\,\right|\, \bbeta^z_t=\beta\right]
+\mathbf{E}\left[\left.|\bpi^z_s|^{\frac{2\rho\delta}{\delta-2}}\,\right|\,X^{z,\pi}_t=x, \bbeta^z_t=\beta\right]\right)^{1-\frac{2}{\delta}}.\nonumber
%\\ \leq&\left(\sup_{s\in[t,t+h]}\mathbf{E}\left[\left.\exp\left(-\delta b(s)X^{z,\pi}_s\right)\,\right|\,X^{z,\pi}_t=x, \bbeta^z_t=\beta\right]\right)^{\frac{2}{\delta}}
%\\&\times\left(\sup_{s\in[t,t+h]}\mathbf{E}\left[\left.\exp\left(2C_\delta{\bbeta^z_s}^2\right)\,\right|\, \bbeta^z_t=\beta\right]+C_1\sup_{s\in[t,t+h]}\mathbf{E}\left[\left.\exp\left(\frac{2p\rho\delta}{\delta-2}{\bbeta^z_s}^2\right)\,\right|\,\bbeta^z_t=\beta\right]\right)^{1-\frac{2}{\delta}}.
\end{align}
Then by \eqref{inequ01}--\eqref{inequ02} and Lemma \ref{lemma1}, there exist $C_1>0$ and some $\tilde t\in(t,T)$ such that for any $z\in\range(Z)$
\[
\sup_{s\in[t,\tilde t]}\mathbf{E}\left[\left.\exp\left(-2b(s)X^{z,\pi}_s+C{\bbeta^z_s}^2\right)|\bpi^z_s|^{\rho}\,\right|\,X^{z,\pi}_t=x, \bbeta^z_t=\beta\right]\le C_1e^{\epsilon z^2}.
\]
%Therefore, Assumption \ref{assu0} is verified.

\paragraph{Step 3.} Let $\pi\in\Pi$  and $(t,x,\beta)\in[0,T)\times\Rbf^2$ be fixed. We show that there exists $\tilde{t}\in(t,T)$ such that, for all $z\in\range(Z)$,
	$$\left\{\frac{1}{h}\int_t^{t+h}	\mathcal{A}^{z,\pi} g^z(s,X^{z,\pi}_s,\bbeta^z_s)\mathrm{d}s\right\}_{0<h<\tilde{t}-t}$$ is uniformly integrable under the conditional probability $\mathbb{P}[\,\cdot\mid X^{z,\pi}_t=x,\bbeta^z_t=\beta]$.
%	As
%	\[
%	\begin{split}
%	\mathbf{E}&\left[\left.|\frac{1}{h}\int_t^{t+h}	\mathcal{A}^{z,\pi} g^z(s,X^{z,\pi}_s,\bbeta^z_s)\mathrm{d}s|^{2}\,\right|\,X^{z,\pi}_t=x, \bbeta^z_t=\beta\right]\\&\leq
%	\mathbf{E}\left[\left.\frac{1}{h}\int_t^{t+h}|	\mathcal{A}^{z,\pi} g^z(s,X^{z,\pi}_s,\bbeta^z_s)|^{2}\mathrm{d}s\,\right|\,X^{z,\pi}_t=x, \bbeta^z_t=\beta\right].
%	\end{split}
%	\]
  We only need to show {that there exists some $\tilde{t}\in(t,T)$, such that for any $z\in\range(Z)$,}
  \[\sup_{s\in[t,\tilde t]}\mathbf{E}\left[\left.|\mathcal{A}^{z,\pi} g^z(s,X^{z,\pi}_s,\bbeta^z_s)|^{2}\,\right|\,X^{z,\pi}_t=x, \bbeta^z_t=\beta\right]<
  \infty.\]
%for some $\tilde{t}\in(t,T)$.
Using the closed form \eqref{eq:ans:gz}--\eqref{eq:ans:fz} of $g^z$, we have
\[
\begin{split}
	&\mathcal{A}^{z,\pi} g^z(s,X^{z,\pi}_s,\bbeta^z_s)\\
	=&g^z(s,X^{z,\pi}_s,\bbeta^{z}_s)\times Q(s,\bpi^z_s,\bbeta^{z}_s,z),
	\\=&-\frac{1}{k}\exp\left[-b(s)X^{z,\pi}_s+f^z(s,\bbeta^z_s)\right]\times Q(s,\bpi^z_s,\bbeta^{z}_s,z),
\end{split}
\]
where
\[\begin{split}
	Q(t,\pi,\beta,z)=&\frac{1}{2}{m}_1^\prime(t)\beta^2+{m}_2^\prime(t)\beta z+\frac{1}{2}{m}_3^\prime(t)z^2+{m}_4^\prime(t)\beta
	+{m}_5^\prime(t)z+{m}_6^\prime(t)\\&-b(t)\pi(z-r)+\zeta(t)[{m}_1(t)\beta+{m}_2(t)z+{m}_4(t)][z\sigma^{-2}-\beta\sigma^{-2}-b(t)\pi]\\&+\frac{1}{2}b(t)^2\sigma^2\pi^2+\frac{1}{2}\sigma^{-2}\zeta^{2}(t)[{m}_1(t)+({m}_1(t)\beta+{m}_2(t)z+{m}_4(t))^2].
\end{split}
\]
%The coefficient of $z^2$ in $Q(t,\pi,\beta,z)$ is
%\[
%\frac{1}{2}{m}_3^\prime(t)+\sigma^{-2}\zeta(t){m}_2(t)+\frac{1}{2}\sigma^{-2}\zeta^{2}(t){m}_2^2(t)=0.
%\]
$Q(t,\pi,\beta,z)$ is a quadratic form of $\pi, \beta$ and $z$ with bounded coefficients. Then, for any $\epsilon>0$, there exists $C_{1,\epsilon}>0$  such that {$\forall t\in[0,T]$ and $z\in\range(Z)$,}
\[\begin{split}|Q(t,\pi,\beta,z)|\le C_{1,\epsilon}[\beta^2+\pi^2+\frac{\epsilon}{2}z^2+1]
\le C_{1,\epsilon}\left[\exp(\beta^2+\frac{\epsilon}{2}z^2)+\pi^2\right].% \ t\in[0,T].
\end{split}\]
%Meanwhile, $f^z(t,\beta)$ is also a quadratic form of  $\beta$ and $z$. The coefficient of  $z^2$ in $f^z(t,\beta)$ is $\frac{1}{2}{m}_3(t)<0$. Then, for any $\epsilon>0$, there exist constants $C_{3,\epsilon}>0$ and $C_{4,\epsilon}>0$ such that
%\[|f^z(t,\beta)|\le C_{3,\epsilon}+C_{4,\epsilon}\beta^2+{1\over 2}\epsilon z^2, \ t\in[0,T].\]
By \eqref{eq:ans:gz}--\eqref{eq:ans:fz} and $m_3(s)<0$,
for any $\epsilon>0$, there exist $C_{2,\epsilon}>0$ and $C_{3,\epsilon}>0$ such that  $\forall s\in[t,T]$ and $z\in\range(Z)$,
\begin{align*}
	|g^z(s,X^{z,\pi}_s,\bbeta^z_s)|
\leq C_{2,\epsilon}\exp\left(-b(s)X^{z,\pi}_s+C_{3,\epsilon}{\bbeta^z_s}^2+{1\over2}\epsilon z^2\right).
\end{align*}
Then, for any $\epsilon>0$, there exist $C_{4,\epsilon}>0$ and $C_{5,\epsilon}>0$ such that $\forall s\in[t,T]$ and $z\in\range(Z)$,
\begin{align*}
|\mathcal{A}^{z,\pi} g^z(s,X^{z,\pi}_s,\bbeta^z_s)|
=&g^z(s,X^{z,\pi}_s,\bbeta^{z}_s)\times |Q(s,\bpi^z_s,\bbeta^{z}_s,z)|\\
\le&C_{4,\epsilon}\exp\left[-b(s)X^{z,\pi}_s+C_{5,\epsilon}{\bbeta^z_s}^2+\epsilon z^2\right][\pi^2(s)+1].
\end{align*}
Similarly to the derivation of \eqref{inequ}, by \eqref{inequ01}--\eqref{inequ02} and Lemma \ref{lemma1}, for any $\epsilon>0$, there exist  $C>0$ and $\tilde t\in[t,T)$  such that $\forall s\in[t,T]$ and $z\in\range(Z)$,
\[\sup_{s\in[t,\tilde t]}\mathbf{E}\left[\left.|\mathcal{A}^{z,\pi} g^z(s,X^{z,\pi}_s,\bbeta^z_s)|^{2}\,\right|\,X^{z,\pi}_t=x, \bbeta^z_t=\beta\right]\le
C e^{\epsilon z^2}<\infty.\]
%and Assumption \ref{assu1} is proved.
Moreover, we  have, $\forall h\in(0,\tilde t-t]$ {and $z\in\range(Z)$},
	\[
\begin{split}
&\left(\mathbf{E}\left[\left.\frac{1}{h}\int_t^{t+h}	\mathcal{A}^{z,\pi} g^z(u,X^{z,\pi}_u,\bbeta^{z}_u)\mathrm{d}u\,\right|\,X^{z,\pi}_t=x, \bbeta^z_t=\beta\right]\right)^2
\\&\leq \mathbf{E}\left[\left.\left|\frac{1}{h}\int_t^{t+h}	\mathcal{A}^{z,\pi} g^z(u,X^{z,\pi}_u,\bbeta^{z}_u)\mathrm{d}u\right|^{2}\,\right|\,X^{z,\pi}_t=x, \bbeta^z_t=\beta\right]
\\&\leq\mathbf{E}\left[\left.\frac{1}{h}\int_t^{t+h}|\mathcal{A}^{z,\pi} g^z(u,X^{z,\pi}_u,\bbeta^{z}_u)|^{2}\mathrm{d}u\,\right|\,X^{z,\pi}_t=x, \bbeta^z_t=\beta\right]
\\&\le C e^{\epsilon z^2}<\infty.
\end{split}
\]

\paragraph{Step 4.} Let $\pi\in\Pi$ and $(t,x,\beta)\in[0,T)\times\Rbf^2$ be fixed.  Let $\alpha\ge1$ ($\phi$ is concave). We show that there exists $\tilde{t}\in(t,T)$ such that,
	 under the conditional probability $\mathbb{P}[\,\cdot\mid \bbeta^z_t=\beta]$,	the family
	\begin{equation}\label{equ:assu2:beta:Z}
		\left\{\phi^\prime\left(g^Z(t,x,\beta)\right)\mathbf{E}\left[\left.\frac{1}{h}\int_t^{t+h}\mathcal{A}^{Z,\pi} g^Z(u,X^{Z,\pi}_u,\bbeta^{Z}_u)\mathrm{d}u\,\right|Z,X^{Z,\pi}_t=x,\bbeta^Z_t=\beta\right]\right\}_{0<h\le\tilde{t}-t}
	\end{equation}
is uniformly integrable.
By \eqref{eq:ans:gz}--\eqref{eq:ans:fz}, $\phi'(x)=(-x)^{\alpha-1}$ and $m_3(s)<0$,
for any $\epsilon>0$, there exists $C_{1,\epsilon}>0$ such that
 $$|\phi^\prime\left(g^z(t,x,\beta)\right)|\le C_{1,\epsilon} e^{\epsilon z^2}, \ z\in\range(Z).$$
From Step 3, there exist some $\tilde{t}\in(t,T)$ and $C_{2,\epsilon}>0$ such that,  $\forall h\in(0,\tilde{t}-t]$ {and $z\in\range(Z)$},
\[
\begin{split}
&\left(\phi^\prime\left(g^z(t,x,\beta)\right)\mathbf{E}\left[\left.\frac{1}{h}\int_t^{t+h}\mathcal{A}^{z,\pi} g^z(u,X^{z,\pi}_u,\bbeta^{z}_u)\mathrm{d}u\,\right|\,X^{z,\pi}_t=x, \bbeta^z_t=\beta\right]\right)^2
\\&=\left[\phi^\prime\left(g^z(t,x,\beta)\right)\right]^2
\left(\mathbf{E}\left[\left.\frac{1}{h}\int_t^{t+h}
\mathcal{A}^{z,\pi} g^z(u,X^{z,\pi}_u,\bbeta^{z}_u)\mathrm{d}u\,\right|\, X^{z,\pi}_t=x, \bbeta^z_t=\beta\right]\right)^2
\\&\leq C_{2,\epsilon} e^{3\epsilon z^2}.
\end{split}
\]
We can choose a sufficiently small  $\epsilon$ such that $6\epsilon\zeta(t)<1$. Then {by Lemma \ref{lma:exp:normal},}
\[\begin{split}
&\sup_{h\in(0,\tilde{t}-t)}\!\!\mathbf{E}\!\left[\left.\!\left(\phi^\prime\left(g^Z(t,x,\beta)\right)\mathbf{E}\left[\frac{1}{h}\int_t^{t+h}\mathcal{A}^{Z,\pi} g^Z(u,X^{Z,\pi}_u,\bbeta^Z_u)\mathrm{d}u\left|Z,\genfrac{}{}{0pt}{}{X^{Z,\pi}_t=x}{\bbeta^Z_t=\beta}\right.\right]\right)^2\,\right|\bbeta_t=\beta\right]
\\&\leq C_{2,\epsilon} \mathbf{E}\left[e^{3\epsilon Z^2}\mid\bbeta_t=\beta\right]
\\&= C_{2,\epsilon} {1 \over \sqrt{1-6\epsilon \zeta(t)}}\exp\left(3\epsilon \beta^2\over 1-6\epsilon \zeta(t)\right)
<\infty.
\end{split}
\]
Therefore, family \eqref{equ:assu2:beta:Z} is uniformly integrable under the conditional probability $\mathbb{P}[\,\cdot\mid \bbeta^z_t=\beta]$.

\paragraph{Step 5.} Let $\pi\in\Pi$ and $(t,x,\beta)\in[0,T)\times\Rbf^2$ be fixed.  Let $\alpha<1$ ($\phi$ is convex). We show that there exists $\tilde{t}\in(t,T)$ such that, under the conditional probability $\mathbb{P}[\,\cdot\mid \bbeta^z_t=\beta]$,	the family
$$\left\{\phi^\prime\left(g^{Z,\pi_{t,h}}(t,\!x,\!\beta)\right)\mathbf{E}\left[\left.\frac{1}{h}\int_t^{t+h}\mathcal{A}^{Z,\pi} g^Z(u,X^{Z,\pi}_u,\bbeta^{Z}_u)\mathrm{d}u\,\right|Z,X^{Z,\pi}_t=x,\bbeta^Z_t=\beta\right]\right\}_{0<h\le\tilde{t}-t}
$$
is uniformly integrable.	
It suffices to show that there exists $\tilde{t}\in(t,T)$ such that
\[
\sup_{h\in(0,\tilde{t}-t)}\!\mathbf{E}\!\left[\!\left(\!\phi^\prime\!\left(g^{Z,\pi_{t,h}}(t,\!x,\!\beta)\right)\mathbf{E}\left[\left.\frac{1}{h}\int_t^{t+h}\!\mathcal{A}^{Z,\pi} g^Z(u,X^{Z,\pi}_u,\bbeta^Z_u)\mathrm{d}u\left|Z,\genfrac{}{}{0pt}{}{X^{Z,\pi}_t=x}{\bbeta^Z_t=\beta}\right.\right]\right)^2\,\right|\bbeta_t=\beta\right]
\]
if finite. We consider the optimization problem
\begin{equation}\label{opt1}
\sup_\pi\mathbf{E}\left[U(X^{z, \pi}_T)\mid\mathcal{F}_t^W\right],
\end{equation}
where $\pi$ is adapted to $\{\mathcal{F}_t^W\}$. Problem \eqref{opt1} is a standard expected utility maximization problem. The optimal strategy of problem \eqref{opt1} is $\bar{\pi}(t)=\frac{z-r}{b(t)\sigma^2}$. The corresponding value function is given by
$$\bar{g}^z(t,x)=-\frac{1}{k}\exp\left[-b(t)x+0.5\sigma^{-2}(t-T)(z-r)^2\right].$$
Obviously
$$g^{z,\pi_{t,h}}(t,x,\beta)\leq \bar{g}^z(t,x).$$
By the assumption of the theorem,  $\alpha>1-0.5\sigma_0^{-2}\sigma^2T^{-1}$. Then $$2(1-\alpha)\sigma^{-2}(T-t)\zeta(t)<1, \ t\in[0,T].$$
Then we have, for any $\epsilon>0$ with
$$2\epsilon\zeta(t)+2(1-\alpha)\sigma^{-2}(T-t)\zeta(t)<1,\quad t\in[0,T],$$
 there exist $\tilde{t}\in(t,T)$ and constant $C_\epsilon>0$ such that,  $\forall h\in(0,\tilde{t}-t]$, {by Lemma \ref{lma:exp:normal},}
\[
\begin{split}
&\mathbf{E}\left[\left(\phi^\prime\left(g^{Z,\pi_{t,h}}(t,x,\beta)\right)\mathbf{E}\left[\left.\frac{1}{h}\int_t^{t+h}\mathcal{A}^{Z,\pi} g^Z(u,X^{Z,\pi}_u,\bbeta^Z_u)\mathrm{d}u\left|Z,\genfrac{}{}{0pt}{}{X^{Z,\pi}_t=x}{\bbeta^Z_t=\beta}\right.\right]\right)^2\,\right|\,\bbeta^Z_t=\beta\right]
\\&\leq C_\epsilon\mathbf{E}\left[\left.\exp\left(\epsilon Z^2 +(\alpha-1)\sigma^{-2}(t-T)(Z-r)^2\right)\,\right|\, \bbeta^Z_t=\beta\right]<\infty.
\end{split}
\]
%which is finite if $2\epsilon\zeta(t)+2(1-\alpha)\sigma^{-2}(T-t)\zeta(t)<1$. Because $\epsilon$ can be arbitrage chosen, we only require that
%$$2(1-\alpha)\sigma^{-2}(T-t)\zeta(t)<1, \ t\in[0,T],$$
%which means
%\[
%1-\alpha<\min_{t\in(0,T)}{\sigma^2+\sigma_0^2 t\over2\sigma_0^2(T-t)}=0.5\sigma_0^{-2}\sigma^2T^{-1},
%\]
%i.e.,

\paragraph{Step 6.}  Let $\pi\in\Pi$, $(t,x,\beta)\in[0,T)\times\Rbf^2$ and $z\in\range(Z)$  be fixed. We show
\begin{equation}\label{eq:lim:gz}
\lim_{h\to 0^+}\mathbf{E} \left[
g^z(t+h,X^{z,\pi}_{t+h},\bbeta^{z}_{t+h})
\mid X^{z,\pi}_t=x, \bbeta^z_t=\beta\right]=g^z(t,x,\beta).
\end{equation}
Actually, by Step 2,  there exists some $\tilde{t}\in(t,T)$ such that, under the conditional probability $\mathbb{P}[\,\cdot\mid X^{z,\pi}_t=x,\bbeta^z_t=\beta]$,
\begin{equation*}
	\begin{split}
		&\left\{\int_t^sg^z_x(u,X^{z,\pi}_u,\bbeta^z_u) \bpi^z_u \sigma\mathrm{d}W_u\right\}_{s\in[t,\tilde{t})}  \\& \text{ and } \left\{\int_t^sg^z_\beta(u,X^{z,\pi}_u,\bbeta^z_u)\zeta(u)\sigma^{-1}\mathrm{d}W_u\right\}_{s\in[t,\tilde{t})}
	\end{split}
\end{equation*} are martingales w.r.t. filtration $\{\F_s^W\}$. Then, by the Markov property, $\forall h\in(0,\tilde{t}-t)$,
	\begin{equation}\label{gequ}
	\begin{split}
	&\mathbf{E} \left[
g^z(t+h,X^{z,\pi}_{t+h},\bbeta^{z}_{t+h})
\mid X^{z,\pi}_t=x, \bbeta^z_t=\beta\right]-g^z(t,x,\beta)\\
	%=&\mathbf{E}\left[\left.g^z\left(t+h,X^{z,\pi}_{t+h},\bbeta^{z}_{t+h}\right)\,\right |\,\F^W_t\right]-g^z(t,X^{z,\pi}_t,\bbeta^z_t)\\
	=&\mathbf{E}\left[\left.\int_t^{t+h}\mathcal{A}^{z,\pi} g^z(u,X^{z,\pi}_u,\bbeta^{z}_u)\mathrm{d}u\,\right|\,X^{z,\pi}_t=x, \bbeta^z_t=\beta\right].
	\end{split}
\end{equation}
By Step 3,
\[
\mathop{\lim}\limits_{h\rightarrow 0^+}\mathbf{E}\left[\left.\frac{1}{h}\int_t^{t+h}\mathcal{A}^{z,\pi} g^z(u,X^{z,\pi}_u,\bbeta^{z}_u)\mathrm{d}u\,\right|\,X^{z,\pi}_t=x,\bbeta^{z}_t=\beta\right]
=\mathcal{A}^{z,\pi}g^z(t,x,\beta).
\]
Then
\[\mathop{\lim}\limits_{h\rightarrow 0^+}\mathbf{E}\left[\left.\int_t^{t+h}\mathcal{A}^{z,\pi} g^z(u,X^{z,\pi}_u,\bbeta^{z}_u)\mathrm{d}u\,\right|\,X^{z,\pi}_t=x,\bbeta^{z}_t=\beta\right]=0,\]
which combined with \eqref{gequ} leads to \eqref{eq:lim:gz}.
%$$\lim_{h\to 0^+}\mathbf{E} \left[
%g^z(t+h,X^{z,\pi}_{t+h},\bbeta^{z}_{t+h})
%\mid X^{z,\pi}_t=x, \bbeta^z_t=\beta\right]=g^z(t,x,\beta)$$
%and Assumption \ref{assu3} is valid.
\qed
\end{appendices}

\paragraph{Acknowledgements.} The work is supported by the National Key R\&D Program of China
({2020YFA0712700}) and
 the National Natural Science Foundation of China (11901574, 11871036, 12271290, {12071146}).
The authors thank the members of the group of Mathematical Finance and Actuarial Science at the Department of Mathematical Sciences, Tsinghua University for their helpful feedbacks and conversations.

\bibliographystyle{abbrvnat}
\bibliography{reference}
\end{document}